\newcommand{\FF}{{\mathcal F}}
\renewcommand{\SS}{{\mathcal S}}
\newcommand{\TT}{{\mathcal T}}
\newcommand{\KK}{\mathcal{K}}
\newcommand{\claim}[2]{\begin{equation}\mbox{\parbox{\linewidth}{{\em #2}}}\label{#1}\end{equation}}
\newtheorem{theorem}{Theorem}[section]
\newtheorem{corollary}[theorem]{Corollary}
\newtheorem{lemma}[theorem]{Lemma}
\newtheorem{observation}[theorem]{Observation}
\newtheorem{conjecture}[theorem]{Conjecture}
\newtheorem{proposition}[theorem]{Proposition}
\def\newclaim#1#2{
   \global\advance\claimno by 1\relax
   \bigskip\noindent\rlap{\rm(\the\claimno)}\ignorespaces
   \global\expandafter\edef\csname CLAIMLABEL#1\endcsname{\the\claimno}\relax
   \hangindent=33pt\hskip30pt{\sl#2}\bigskip}
\def\mylabel#1{{\label{#1}}}
\def\junk#1{}
\newenvironment{subproof}{%
  \begin{proof}[Subproof]%
}{%
  \end{proof}%
}
\begin{document}
\title{Three-coloring triangle-free graphs on surfaces V. Coloring planar graphs with distant anomalies\thanks{A preprint of an earlier version of this paper appeared (under different title) as~\cite{dkt}.}}
\author{%
     Zden\v{e}k Dvo\v{r}\'ak\thanks{Computer Science Institute (CSI) of Charles University,
           Malostransk{\'e} n{\'a}m{\v e}st{\'\i} 25, 118 00 Prague, 
           Czech Republic. E-mail: {\tt rakdver@iuuk.mff.cuni.cz}.
	   Supported by the Center of Excellence -- Inst. for Theor. Comp. Sci., Prague, project P202/12/G061 of Czech Science Foundation and by
	   project LH12095 (New combinatorial algorithms - decompositions, parameterization, efficient solutions) of Czech Ministry of Education.}
 \and
     Daniel Kr{\'a}l'\thanks{Faculty of Informatics,
     Masaryk University, Botanick\'a 68A, 602 00 Brno, Czech Republic, and
     Mathematics Institute, DIMAP and Department of Computer Science, University
     of Warwick, Coventry CV4 7AL, UK. E-mail: {\tt dkral@fi.muni.cz}}
 \and
        Robin Thomas\thanks{School of Mathematics, 
        Georgia Institute of Technology, Atlanta, GA 30332. 
        E-mail: {\tt thomas@math.gatech.edu}.
        Partially supported by NSF Grants No.~DMS-0739366 and DMS-1202640.}
}
\date{February 14, 2020}
\maketitle
\begin{abstract}
We settle a problem of Havel by showing 
that there exists an absolute constant $d$ such that
if $G$ is a planar graph in which every two distinct triangles
are at distance at least $d$, then $G$ is $3$-colorable.  
In fact, we prove a more general theorem.  Let $G$ be a planar graph,
and let ${\cal H}$ be a set of connected subgraphs of $G$, each of
bounded size, such that every two distinct members of ${\cal H}$ are
at least a specified distance apart and all triangles of $G$ are contained
in $\bigcup{\cal H}$. We give a sufficient condition
for the existence of a $3$-coloring $\phi$ of $G$ such that for every
$H\in\cal H$ the restriction of $\phi$ to $H$ is constrained in a
specified way.
\end{abstract}

\section{Introduction}

This paper is a part of a series aimed at studying the $3$-colorability
of graphs on a fixed surface that are either triangle-free, or have their
triangles restricted in some way.
Here, we are concerned with $3$-coloring planar graphs. 
All {\em graphs} in this paper are finite and simple; that is,
have no loops or multiple edges.  All \emph{colorings} that we consider are proper, assigning
different colors to adjacent vertices.  The following is a classical theorem of
Gr\"otzsch~\cite{grotzsch1959}.

\begin{theorem}
\mylabel{grotzsch}
Every triangle-free planar graph is $3$-colorable.
\end{theorem}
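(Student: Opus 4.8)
The plan is to prove Grötzsch's theorem by the precoloring-extension method, which underlies most modern proofs (Thomassen's in particular). The bare statement is too rigid to induct on directly -- deleting or contracting an edge can create triangles, and one has no control over the boundary -- so I would instead prove a strengthening with enough slack: for every plane triangle-free graph $G$ whose outer face is bounded by a cycle $C$ with $|V(C)|$ at most some small absolute bound (six works), and which contains no separating cycle of length at most five, every proper $3$-coloring of $C$ extends to a proper $3$-coloring of $G$, with one explicitly described family of exceptions. The canonical exception: $|V(C)|=6$, say $C=v_1v_2\cdots v_6$, there is a vertex $u\notin V(C)$ adjacent to $v_1,v_3,v_5$, and the given coloring assigns $v_1,v_3,v_5$ three distinct colors -- then it manifestly cannot extend. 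Grötzsch's theorem follows from the strengthening applied to an arbitrary triangle-free plane graph after enclosing a face by a short cycle (and dealing with any separating short cycle by cutting along it and pasting the colorings of the two sides).

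The proof of the strengthening is by induction on $|V(G)|$. First I would clear away the easy reductions: if $G$ is disconnected or has a cut-vertex, or $C$ has a chord, or there is a separating cycle of length equal to the boundary length, split $G$ along the separation and apply the hypothesis to each piece, gluing the colorings along the shared vertices. After these reductions one may assume $G$ is $2$-connected and $C$ is induced; combined with the no-small-separating-cycle hypothesis, this forces the subgraph of $G$ drawn inside $C$ to be nearly a quadrangulation (every internal face of length $4$ or $5$), which is exactly the structure the local analysis requires.

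The core of the argument is a case analysis of the configuration just inside $C$: one locates either a vertex of degree at most two off $C$ (delete it, apply induction, color it last), or a short face incident with $C$ containing a degree-$3$ vertex (delete that vertex, recolor locally, extend), or a vertex $v\notin V(C)$ with two or three neighbours on $C$, which can be absorbed into the boundary to produce a slightly longer outer cycle on strictly fewer internal vertices -- and this is precisely where the length bound on $C$ is used and where the exceptional family appears. The main obstacle is exactly this last step together with its bookkeeping: after enlarging the boundary one must verify that every $3$-coloring of the new outer cycle which fails to extend corresponds to a member of the exceptional family, so that the induction hypothesis is genuinely applicable; arranging the list of exceptions so that it is closed under exactly these operations, while also covering all the $|V(C)|=6$ subcases, is the delicate part. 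Once the right strengthened statement and the right finite list of reducible configurations are in place, the remaining verifications are routine.
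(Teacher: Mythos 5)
The paper does not prove this statement at all---Theorem~\ref{grotzsch} is quoted as the classical theorem of Gr\"otzsch and cited from the literature---so the only question is whether your sketch is sound on its own, and it is not: the strengthening you propose to induct on is false as stated. For a precolored facial $6$-cycle the exceptional family is not the single configuration ``a vertex $u$ adjacent to $v_1,v_3,v_5$ which receive three distinct colors''. The genuine obstruction is global and comes from winding numbers, exactly the phenomenon Section~\ref{sec-cyl} of this paper is built around: if the disk bounded by $C$ is quadrangulated, then by Propositions~\ref{wsum0} and~\ref{wC4} any extension forces the winding number of the precoloring on $C$ to be zero, while the coloring $1,2,3,1,2,3$ has winding number $2$. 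Concretely, take the outer hexagon $C=v_1\ldots v_6$, an inner hexagon $u_1\ldots u_6$, the edges $v_iu_i$, and one more vertex $w$ adjacent to $u_1,u_3,u_5$. This graph is triangle-free (indeed bipartite), $C$ is induced, every $4$-cycle bounds a face, so there is no separating cycle of length at most five, and no vertex has more than one neighbour on $C$; yet the coloring $1,2,3,1,2,3$ of $C$ does not extend. So your induction hypothesis, with that one exception, is simply not true, and the deferred ``bookkeeping'' of making the exception list closed under your reductions cannot succeed: any correct version of the $6$-cycle statement must build in the quadrangulation/winding-number obstruction (this is the content of the Gimbel--Thomassen characterization, cited here as~\cite{gimbel}), which is a substantial theorem rather than a routine verification.

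A second, smaller defect: the step in which a vertex with two or three neighbours on $C$ is ``absorbed into the boundary to produce a slightly longer outer cycle'' leaves the scope of your own statement, since the enlarged cycle can have length $7$ or $8$ while your hypothesis caps $|C|$ at six, so the induction hypothesis no longer applies after that operation. The classical routes avoid both problems: Gr\"otzsch/Aksionov prove extension only for facial cycles of length at most five (the form used in this paper as Lemma~\ref{lem:aksionov}), eliminating $4$-faces by identifying opposite vertices rather than growing the boundary, while Thomassen's proof strengthens to a statement for girth five. If you insist on boundary length six, you are in effect committing to prove the full Gimbel--Thomassen extension theorem, exceptions and all.
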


There is a long history of generalizations that extend the theorem
to classes of graphs that include triangles.  An easy modification of Gr\"otzsch' proof
shows that every planar graph with at most one triangle is $3$-colorable.
Even more is true---every planar graph with at most three triangles is $3$-colorable.
This was first claimed by Gr\" unbaum~\cite{grunbaum}, however his proof contains
an error.  This error was fixed by Aksionov~\cite{aksenov} and later Borodin~\cite{bornew}
gave another proof.  There are infinitely many $4$-critical planar graphs with four triangles,
but they were recently completely characterized by Borodin et al.~\cite{4c4t}.

As another direction of research, Gr\" unbaum~\cite{grunbaum} conjectured that every
planar graph with no intersecting triangles is $3$-colorable.  This was disproved by Havel~\cite{conj-havel},
who formulated a more cautious question whether there exists a constant $d$ such that every planar graph
such that the distance between every two triangles is at least $d$ is $3$-colorable.  In~\cite{havel-zbarv},
Havel shows that if such a constant $d$ exists, then $d\ge 3$, and Aksionov and Mel'nikov~\cite{aksmel} improved this bound to $d\ge 4$.
Borodin~\cite{BorIrred} constructed a family of graphs that suggests
that it may not be possible to obtain a positive answer to Havel's
question using local reductions only.

The answer to Havel's question is known to be positive under various additional conditions (e.g., no $5$-cycles~\cite{fartr1},
no $5$-cycles adjacent to triangles~\cite{fartr2}, a distance constraint on $4$-cycles~\cite{le4far}),
see the on-line survey of Montassier~\cite{montasweb} for a more complete list.
The purpose of this paper is to describe a solution to Havel's problem.

\begin{theorem}\mylabel{havel}
There exists an absolute constant $d$ such that if $G$ is a planar
graph and every two distinct triangles in $G$ are at distance at least $d$,
then $G$ is $3$-colorable.
\end{theorem}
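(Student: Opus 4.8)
The plan is to reduce Theorem~\ref{havel} to the more general ``local constraints'' theorem announced in the abstract, and to prove that general statement by a discharging-free structural induction after passing to a minimal counterexample with a ``precoloring-extension'' formulation. First I would set up the right inductive statement: instead of working with plain $3$-colorability, I would work with a planar graph $G$ together with a family $\mathcal H$ of connected subgraphs, pairwise at distance at least $d$ and together covering all triangles, and ask for a $3$-coloring whose restriction to each $H\in\mathcal H$ lies in a prescribed list of ``admissible'' patterns; the hypothesis would include that each prescribed list is nonempty and ``not too restrictive'' in a sense to be pinned down (e.g.\ each $H$, as an isolated gadget, admits the prescribed constraint). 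This generalization is what makes induction possible, because when we cut the surface (here the plane) along a short separating curve we inherit a problem of exactly the same type on each side, with the anomalies split among the pieces and the curve carrying a new, bounded-size precolored piece.

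The core of the argument is a reduction to the triangle-free (or near-triangle-free) case handled in the earlier papers of this series, and ultimately to Gr\"otzsch' theorem (Theorem~\ref{grotzsch}). The key steps, in order, would be: (1) take a counterexample minimizing $|V(G)|+|E(G)|$; (2) standard reductions show $G$ is $2$-connected, has no separating cycle of length $\le 4$ bounding a disc with few vertices inside, etc., so that each member of $\mathcal H$ can be enclosed in a bounded-radius ball whose boundary is a short cycle; (3) if two anomalies can be separated by such a short cycle, cut there and apply induction to both sides, recombining the colorings because the cycle is precolored consistently by the constraint lists --- this step crucially uses that $d$ is large enough that the ``recoloring wiggle room'' along the cut is controlled (this is where the distance hypothesis is spent); (4) hence all anomalies lie in one small region, so after deleting a bounded part of $G$ around that region we are left with a triangle-free planar graph with a precolored short boundary cycle, to which the precoloring-extension version of Gr\"otzsch' theorem (from the earlier parts of the series, or Thomassen-type results) applies; (5) finally, extend that coloring across the bounded region by brute force, checking that the constraint lists were defined permissively enough that some admissible pattern is compatible with the extension --- this is the case analysis that fixes the numerical value of $d$.

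The main obstacle, and the part that carries essentially all the difficulty, is step~(3) together with the precise choice of what ``admissible constraint'' means: the constraint lists must be rich enough that the reductions in step~(2) and the brute-force extension in step~(5) always find a compatible pattern, yet restrictive enough that when we cut along a short cycle in step~(3) the two sub-instances still satisfy the hypotheses of the inductive statement --- in particular, the new piece created on the cut must itself be assignable a valid (nonempty, non-over-restrictive) constraint list, uniformly in how the cycle meets $G$. Making this bookkeeping consistent is what forces the constant $d$ to be large but absolute, and it is the reason the honest proof needs the machinery developed in the earlier papers rather than a direct Grötzsch-style augmentation. I expect to also need a ``far-apart anomalies don't interact'' lemma: a coloring chosen to satisfy the constraint on one anomaly can be modified in a bounded neighborhood of that anomaly, using the triangle-freeness of everything at distance $\ge d$, without disturbing the constraints on the others --- this Menger/augmenting-path style argument is the technical heart that lets the pieces be glued in step~(4).
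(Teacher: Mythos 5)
There is a genuine gap, and it sits exactly at your steps (3)--(5) and your ``far-apart anomalies don't interact'' lemma. In a planar graph in which (away from the anomalies) essentially every face has length four, a $3$-coloring carries a global invariant: by Propositions~\ref{wsum0} and~\ref{wC4}, the winding numbers of the coloring on the boundaries of the non-quadrilateral faces must sum to zero. Consequently a precoloring of a short separating cycle, or of a distant anomaly, \emph{does} constrain the coloring arbitrarily far away -- the paper points out in the introduction that prescribing specific colorings of the far-apart triangles in Theorem~\ref{havel} would make the statement false, even though each such precoloring extends locally. So your plan to cut along a short cycle separating two anomalies, color the two sides independently by induction, and ``recombine because the cycle is precolored consistently'' cannot work as stated: the coloring produced on one side need not extend across the other side, and no bounded-radius recoloring argument can fix this, because the obstruction is the winding number, a global parity-type invariant, not a local one. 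For the same reason, step (4) (delete a bounded region, precolor its short boundary, and invoke a Gr\"otzsch-type extension theorem) fails: extension from a precolored cycle is only available for very short cycles (length at most five, Lemma~\ref{lem:aksionov}) or under girth-five-type hypotheses, and here the two colorings (inside the bounded region, respecting the anomaly; outside, from the extension theorem) have no reason to be compatible. Also, your step (2) assumption that each anomaly can be enclosed in a bounded-radius ball bounded by a short cycle is not a standard reduction and is not justified.

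What the actual proof needs, and what your outline is missing, is the machinery that tames this winding-number obstruction. First, criticality plus Theorem~\ref{mainlemma} (from part IV of the series) bounds the total length of $(\ge\!5)$-faces linearly in the size of the anomalies, so far from the anomalies the graph is a quadrangulation; minimality then forces every $4$-face in a suitable annular region to be non-collapsible (``tight''), and the structural analysis of Section~\ref{sec-collaps} (Lemmas~\ref{lem:distcrit}--\ref{lemma-struct}) converts this into a long quadrangulated cylinder (a clean joint) around some anomaly. Second, the flow/winding-number analysis of Section~\ref{sec-cyl} (Lemma~\ref{lemma-cyl}, Corollary~\ref{cor-cyl}), via Tutte's correspondence with nowhere-zero $\mathbb{Z}_3$-flows, shows that for such a cylinder the winding-number condition is the \emph{only} obstruction to extending a precoloring of its two boundary cycles. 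Only then is cutting legitimate: one cuts in the middle of the joint and caps each side with a quadrangulation having a small special face (Lemma~\ref{lemma-excap}), which forces the winding numbers of the two inductively obtained colorings to match (here the requirement that each $\Phi_T$ be closed under swapping two colors supplies the needed sign flexibility), after which Corollary~\ref{cor-cyl} glues them. Without the quadrangulated-cylinder structure theorem and the flow-based extension criterion, the induction you describe has no mechanism to make the two sides of any cut agree, so the proposal does not yield a proof.
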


Let us remark that our proof gives an explicit upper bound on the constant $d$ of Theorem~\ref{havel},
which however is quite large (roughly $10^{100}$), especially compared to the aforementioned lower bounds.

A natural extension of Havel's question is whether instead of triangles, we could allow other kinds of distant anomalies,
such as $3$-colorable subgraphs containing several triangles (the simplest one being a diamond, that is, $K_4$ without
an edge) or even more strongly, prescribing specific colorings of some distant subgraphs.  Similar questions have been studied
for other graph classes.  For example, Albertson~\cite{Alb98} proved that if $S$ is a set of vertices in a planar graph~$G$ that are
precolored with colors $1,\ldots, 5$ and are at distance at least $4$ from each other, then the precoloring of $S$ can be extended to a 5-coloring of $G$.
Furthermore, using the results of the third paper of this series~\cite{trfree3}, it is easy to see that any precoloring of sufficiently
distant vertices of a planar graph $G$ of girth at least $5$ can be extended to a $3$-coloring of $G$.  We can even precolor
larger connected subgraphs, as long as these precolorings can be extended locally to the vertices of $G$ at some bounded distance from
the precolored subgraphs.  Both for $5$-coloring planar graphs and $3$-coloring planar graphs of girth
at least five this follows from the fact that the corresponding critical graphs satisfy a certain isoperimetric
inequality~\cite{PosThoHyperb}.

The situation is somewhat more complicated for graphs of girth four.  Firstly, as we will discuss in Section~\ref{sec-cyl},
there is a global constraint on $3$-colorings of plane graphs based on winding number, which implies that in graphs with
almost all faces of length four, precoloring a subgraph may give restrictions on possible colorings of distant parts of the graph.
For example, if we prescribed specific colorings of the triangles in Theorem~\ref{havel}, the resulting claim would be false, even though
such precolorings extend locally.  Secondly, non-facial (separating) $4$-cycles are problematic as well and they need to be treated
with care in many of the results of this series, see e.g. Theorem~\ref{mainlemma} below.  Specifically, we cannot replace triangles
in Theorem~\ref{havel} by diamonds, even though this seems viable when considering only the winding number argument, as shown
by the class of graphs (with many separating $4$-cycles) constructed by Thomas and Walls~\cite{tw-klein}.

Thus, in our second result, we only deal with graphs without separating $4$-cycles, and we need to allow certain flexibility in
the prescribed colorings of distant subgraphs.  The exact formulation of the result (Theorem~\ref{thm-supernosep4}) is somewhat
technical, and we postpone it till Section~\ref{sec-havel}.  Here, let us give just a special case covering several interesting
kinds of anomalies.  The \emph{pattern} of a $3$-coloring $\psi$
is the set $\{\psi^{-1}(1),\psi^{-1}(2),\psi^{-1}(3)\}$.  That is, two $3$-colorings have the same pattern if they only
differ by a permutation of colors.

\begin{theorem}\label{thm-nosep4sp}
There exists an absolute constant $d\ge 2$ with the following property.
Let $G$ be a plane graph without separating $4$-cycles.  Let $S_1$ be a set of vertices of $G$.  
Let $S_2$ be a set of $(\le\!5)$-cycles of $G$.  Let $S_3$ be a set of vertices of $G$ of degree at most $4$.
For each $v\in S_1\cup S_3$, let $c_v\in \{1,2,3\}$ be a color.  For each $K\in S_2$, let $\psi_K$
be a $3$-coloring of $K$.  Suppose that the distance between any two vertices or subgraphs belonging to $S_1\cup S_2\cup S_3$ is at least $d$.
If all triangles in $G$ belong to $S_2$, then $G$ has a $3$-coloring $\varphi$ such that
\begin{itemize}
\item $\varphi(v)=c_v$ for every $v\in S_1$,
\item $\varphi$ has the same pattern on $K$ as $\psi_K$ for every $K\in S_2$, and
\item $\varphi(u)=c_v$ for every neighbor $u$ of a vertex $v\in S_3$.
\end{itemize}
\end{theorem}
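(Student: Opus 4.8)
The plan is to derive Theorem~\ref{thm-nosep4sp} as a corollary of the more general (and more technical) Theorem~\ref{thm-supernosep4}, which I would state and prove first in Section~\ref{sec-havel}; since the statement of that theorem is not yet available in this excerpt, let me instead describe the overall strategy that Theorem~\ref{thm-supernosep4} must implement. First I would reformulate the three types of local constraints uniformly: a precolored vertex $v\in S_1$ with color $c_v$, a cycle $K\in S_2$ with a prescribed pattern, and a low-degree vertex $v\in S_3$ all become instances of a single notion of a ``constrained piece'' --- a connected subgraph $H$ of bounded size together with an admissible set of colorings of $H$ (for $S_3$, the constraint ``all neighbors of $v$ get color $c_v$'' is naturally a constraint on the ball of radius one around $v$, which has bounded size since $\deg(v)\le 4$, and it is consistent precisely because the resulting punctured neighborhood can always be colored). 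I would check that each admissible constraint extends to a proper $3$-coloring of a bounded ball around $H$, since this ``local extendability'' is exactly the hypothesis that the machinery of the earlier papers in the series requires; the pattern formulation for $S_2$ is what makes this possible for $(\le\!5)$-cycles, circumventing the winding-number obstruction discussed in the introduction.

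The core step is then to invoke the structural theory of $4$-critical plane graphs developed earlier in the series (and summarized in Theorem~\ref{mainlemma}, which is why the no-separating-$4$-cycle hypothesis is imposed). The idea is the standard reduction: suppose $G$ is a counterexample with $|V(G)|+|E(G)|$ minimum. Then $G$ is ``critical'' relative to the constraints, meaning that no proper subgraph obtained by a legal reduction admits a constrained $3$-coloring. One shows that such a critical graph cannot contain short separating cycles or other small cuts far from the anomalies, because cutting along them and coloring the pieces separately (using minimality on each side) would recombine to a coloring of $G$ --- here one uses that the cut is far from all pieces of $\cal H$, so the constraints localize to one side. After these reductions, $G$ is essentially a triangle-free planar graph with a bounded number of ``holes'' (one per anomaly), each hole of bounded size, and the anomalies pairwise far apart; the distance hypothesis ensures the holes are spread out across the plane.

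The main obstacle --- and the real content --- is controlling the ``winding number'' / parity interaction between the distant holes. In a plane graph that is almost quadrangulated, a proper $3$-coloring around each hole induces a winding number (an element of $\mathbb{Z}$ or $\mathbb{Z}/3$ depending on normalization), and for a coloring of the whole graph to exist these winding numbers must be globally consistent; locally each constraint can be satisfied but the sum around all holes is forced. The pattern-flexibility in $S_2$ (we only prescribe the partition, not the actual colors) is precisely the slack that lets us adjust winding numbers by multiples that absorb the global parity, and the degree-$\le 4$ condition for $S_3$ plays a similar role. Thus the heart of the proof of Theorem~\ref{thm-supernosep4} is an induction that, whenever local coloring fails, identifies a short ``non-contractible-around-a-hole'' obstruction of small winding-number defect, and uses the isoperimetric/critical-graph bounds from \cite{PosThoHyperb,trfree3} together with the distance assumption to show the obstruction must actually lie inside one bounded piece, contradicting its admissibility. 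I expect the bookkeeping of winding numbers across many holes, and the extraction of a bounded-size contradiction from the criticality of the large triangle-free part, to be where essentially all the difficulty and the enormous constant $d\approx 10^{100}$ come from.
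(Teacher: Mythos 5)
Your framing is exactly the paper's: Theorem~\ref{thm-nosep4sp} is obtained from the general Theorem~\ref{thm-supernosep4} by encoding a vertex of $S_1$, a cycle of $S_2$ with a prescribed pattern, and a vertex of $S_3$ together with its ($\le4$) neighbors as bounded-size, pairwise distant, \emph{locally extendable} anomalies (local extendability coming from Gr\"otzsch, from Lemma~\ref{lem:aksionov}, and from Gimbel--Thomassen and Dvo\v{r}\'ak--Lidick\'y for the degree-$\le 4$ case), and then applying the general theorem with $C$ the null graph, $p=5$, $r=0$; the no-separating-$4$-cycle hypothesis makes the remaining condition of that theorem vacuous. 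One small point you should make explicit in this encoding: the paper's notion of anomaly requires the admissible set of colorings to be closed under swapping some pair of colors, and this closure (which holds for all three constraint types because only patterns, not colors, are prescribed on $S_2$, etc.) is not a formality --- it is used later to flip the sign of a winding number.

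The genuine gap is that you do not prove the general theorem; your third paragraph is a description of where the difficulty lies, not an argument. Concretely, two mechanisms are missing. First, the structural step: in a minimal counterexample one shows that every $4$-face far from the anomalies would be collapsible (identifying opposite vertices preserves the hypotheses, contradicting minimality), so by Lemma~\ref{lem:distcrit} all such faces are ``tight'' with respect to a single nearby anomaly; combined with the face-size bound for critical graphs (Theorem~\ref{mainlemma}, which comes from the previous paper of this series rather than from the hyperbolicity results of \cite{PosThoHyperb,trfree3} that you cite), this forces the existence of a long quadrangulated cylinder (a ``clean joint'') at controlled distance from one anomaly (Lemmas~\ref{lem:cylinder}--\ref{lemma-struct}). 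Second, the coloring step: one needs that a precoloring of the two boundary cycles of such a cylinder extends whenever the winding numbers cancel, proved via nowhere-zero $\mathbb{Z}_3$-flows and Menger's theorem (Lemma~\ref{lemma-cyl}, Corollary~\ref{cor-cyl}), and one then cuts $G$ along the joint, caps each side by an $s$-cap (Lemma~\ref{lemma-excap}) to read off and control the winding number of the induced coloring, and uses the color-swap closure of the anomaly (which negates the winding number on the capped $(\le5)$-face) to match the two sides. Your sketch names the winding-number obstruction but supplies no mechanism for absorbing it, and it also does not address the boundary case of a single anomaly with a precolored $(\le5)$-cycle $C$, which needs a separate tightness argument (there is no second anomaly for Lemma~\ref{lem:distcrit} to play against, and one must instead handle $4$-faces attached to short cycles separating the anomaly from $C$). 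As written, the proposal is a correct derivation of the stated theorem \emph{from} Theorem~\ref{thm-supernosep4}, but not a proof of the latter, which is where essentially all of the content lies.
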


Let us remark that forbidding separating $4$-cycles is necessary when the anomalies $S_2$ (except for triangles) and $S_3$ are considered,
as shown by simple variations of the construction of Thomas and Walls~\cite{tw-klein}.
On the other hand, there does not appear to be any principal reason to exclude $4$-cycles when only precolored single vertices are allowed.

\begin{conjecture}\label{conj-farsv}
There exists an absolute constant $d\ge 2$ with the following property.
Let $G$ be a plane triangle-free graph, let $S$ be a set of vertices of $G$ and let $\psi:S\to \{1,2,3\}$
be an arbitrary function.  If the distance between every two vertices of $S$ is
at least $d$, then $\psi$ extends to a $3$-coloring of $G$.
\end{conjecture}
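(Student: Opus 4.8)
The plan is to prove Conjecture~\ref{conj-farsv} by induction on $|V(G)|$, reducing it to the case where $G$ has no separating $4$-cycle. In that case the conjecture is already a theorem: apply Theorem~\ref{thm-nosep4sp} with $S_1=S$ and $S_2=S_3=\emptyset$, noting that a triangle-free $G$ vacuously has all of its triangles in $S_2$, so the constant $d$ of Theorem~\ref{thm-nosep4sp} works. Now suppose $C=v_1v_2v_3v_4$ is a separating $4$-cycle of $G$; since $G$ is triangle-free, $C$ is induced. Let $G_1,G_2$ be the plane graphs with $G_1\cup G_2=G$ and $G_1\cap G_2=C$, each having $C$ as a facial cycle; each is a proper subgraph of $G$, plane, triangle-free, and the distances within $G_i$ between vertices of $S\cap V(G_i)$ are at least $d$. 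For $i\in\{1,2\}$ let $\Phi_i$ be the set of $3$-colorings of $C$ that extend to a $3$-coloring of $G_i$ agreeing with $\psi$ on $S\cap V(G_i)$. By the inductive hypothesis applied to $G_i$, $\Phi_i\neq\emptyset$, and any $3$-coloring of $C$ lying in $\Phi_1\cap\Phi_2$ glues to the desired $3$-coloring of $G$. So the whole problem reduces to showing $\Phi_1\cap\Phi_2\neq\emptyset$.

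To control the sets $\Phi_i$ I would use the structure theory of critical graphs with a precolored short face from the earlier papers of this series: for a triangle-free plane graph with outer face bounded by a $4$-cycle $C$, the $3$-colorings of $C$ that fail to extend are confined to a short, explicitly described list (essentially, ladder/fan configurations joining $C$ to the obstruction). Because the members of $S$ are pairwise at distance at least $d$, at most one of them can lie within distance $d/2$ of $C$, so at most one of $G_1,G_2$ carries a precolored vertex near $C$; for the other, $\Phi_i$ is closed under permuting the three colors, and the structure theory forces it to be large---it omits only a bounded number of colorings of $C$, in structured positions. A case analysis over the possible shapes of $\Phi_1$ and $\Phi_2$, using on one side permutation-closedness together with largeness, and on the other side that a single \emph{distant} precolored vertex still leaves $\Phi_i$ fairly rich, should then produce a common element.

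The step I expect to be the genuine obstacle---and the reason Conjecture~\ref{conj-farsv} remains open---is that this local quarantining of separating $4$-cycles does not iterate naively. A triangle-free plane graph can contain a long nested chain of separating $4$-cycles $C^{(1)}\supset C^{(2)}\supset\cdots\supset C^{(k)}$, and a precolored vertex deep inside $C^{(k)}$ can force a constraint on $C^{(k)}$ that then propagates outward through every $C^{(j)}$ without attenuating, in sharp contrast to a distant triangle, whose influence is damped out over distance $d$ (this is precisely why prescribing colorings of triangles as in Theorem~\ref{havel} would be false). Handling this seems to require a genuinely global invariant: a refinement, valid for triangle-free graphs with precolored vertices in place of triangles, of the winding-number obstruction discussed in Section~\ref{sec-cyl}, together with a proof that any set $S$ with pairwise distances at least $d$ imposes constraints that are always consistent with that invariant. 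Formulating this invariant and verifying its consistency is, I believe, what a complete proof of Conjecture~\ref{conj-farsv} would require, and it is the part I expect to be hard.
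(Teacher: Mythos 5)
There is a genuine gap, and it sits exactly where you point: your argument never establishes $\Phi_1\cap\Phi_2\neq\emptyset$. The inductive hypothesis only yields $\Phi_i\neq\emptyset$; it gives no control over \emph{which} colorings of the separating $4$-cycle lie in $\Phi_i$. The two auxiliary claims you lean on are both unsupported. First, $\Phi_i$ need not be closed under permuting the three colors even when all precolored vertices of $G_i$ are far from $C$: permuting the colors of an extension destroys its agreement with $\psi$ at those distant vertices, and there is no a priori reason a different extension with the permuted boundary values exists. Second, the assertion that the "structure theory of critical graphs with a precolored short face" confines the non-extendable colorings of $C$ to a bounded, structured list \emph{in the presence of a distant precolored vertex inside} is, in its simplest instance (one precolored vertex far from a facial $4$-cycle), precisely Conjecture~\ref{conj-fourext}. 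So the step you defer to "case analysis" is not a technicality; it is the crux, and as written you are assuming it rather than proving it. (One small factual correction: the statement is no longer open --- Conjecture~\ref{conj-fourext} was proved in~\cite{cylgen-part2}, and with it Conjecture~\ref{conj-farsv} follows.)

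For comparison, the paper itself does not prove this statement unconditionally either; it is stated as a conjecture, and what the paper actually proves is the implication from Conjecture~\ref{conj-fourext}. The mechanism is Theorem~\ref{thm-supernosep4}: each precolored vertex is treated as an anomaly, which is locally extendable by Gr\"otzsch's theorem, and the hypothesis allowing separating $4$-cycles near an anomaly is that the anomaly be \emph{strongly locally extendable with margin $r$} --- which for a single precolored vertex is exactly Conjecture~\ref{conj-fourext}. Your base case (no separating $4$-cycles, via Theorem~\ref{thm-nosep4sp}) and your diagnosis of the obstacle (a precolored vertex interacting with separating $4$-cycles, possibly nested) agree with the paper's framing, but the paper does not glue across a separating $4$-cycle by intersecting color sets; instead it bakes the strong local extendability into the treatment of separating $(\le\!4)$-cycles inside the proof of Theorem~\ref{thm-supernosep4}, which is what lets nested chains of separating $4$-cycles be handled uniformly. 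In short: your reduction is reasonable and your identification of the hard step is accurate, but the proposal proves the conjecture only modulo that step, i.e., modulo (an analogue of) Conjecture~\ref{conj-fourext}, which lies outside this paper.
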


In Theorem~\ref{thm-supernosep4}, we show that Conjecture~\ref{conj-farsv} is implied by the following seemingly simpler statement.

\begin{conjecture}\label{conj-fourext}
There exists an absolute constant $d\ge 2$ with the following property.
Let $G$ be a plane triangle-free graph, let $C$ be a $4$-cycle bounding a face of $G$
and let $v$ be a vertex of $G$.  Let $\psi$ be a $3$-coloring of $C+v$.
If the distance between $C$ and $v$ is at least $d$, then $\psi$ extends to a $3$-coloring of $G$.
\end{conjecture}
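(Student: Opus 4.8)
Since Conjecture~\ref{conj-fourext} is open, what I can give is the natural line of attack, which parallels the structure of the rest of the series. The plan is to argue by contradiction: take $(G,\psi)$ a counterexample minimizing $|V(G)|+|E(G)|$, so that $\psi$ extends to every proper subgraph of $G$ containing $C$ and $v$. Routine criticality arguments then pin down the local structure: $C$ is induced and chordless, $G$ is $2$-connected, and every vertex other than those of $C$ and the vertex $v$ has degree at least $3$, since a vertex of degree at most $2$ could be deleted and colored last. It is worth recording at the outset that a single precolored vertex carries no ``winding'' information, unlike a second precolored $4$-cycle (which the introduction explains would make the statement false); this is why one expects the influence of $\psi|_C$ to decay at bounded distance, and hence why the conjecture should be true.

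The main step will be to turn this expectation into a reduction. Using the disk structure theorems for $C$-critical triangle-free plane graphs from the earlier papers of the series, I would produce a cycle $C'$ at distance $O(1)$ from $C$ such that extending $\psi|_C$ across the bounded annular region between $C$ and $C'$ is equivalent to extending one of a bounded list of canonical colorings of $C'$ into the subgraph $G'$ of $G$ lying beyond $C'$: the region between $C$ and $C'$ is small, so only boundedly much information can cross it, and because $\psi|_C$ uses at most three colors there is no closed cylinder beyond $C$ in which the winding could be trapped. Since $d(C,v)\ge d$ with $d$ huge, the vertex $v$ still lies at distance $\ge d-O(1)$ from $C'$ inside $G'$, and $G'$ is triangle-free with $C'$ bounding a face; one then closes the argument by induction on a statement in which the anomaly near $C$ has been replaced by the bounded canonical one, or, once all non-facial $4$-cycles have been dealt with, by a direct appeal to Theorem~\ref{thm-nosep4sp}.

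The hard part --- and exactly the gap between Conjecture~\ref{conj-fourext} and what is already proved --- is separating (non-facial) $4$-cycles, which Theorem~\ref{thm-nosep4sp} is permitted to exclude but the conjecture is not. A separating $4$-cycle $D$ lying between $C$ and $v$, or with $v$ drawn in its interior, cannot simply be cut off and recursed on, because $v$ may sit arbitrarily close to $D$, so the distance hypothesis is not inherited by either side. What is needed is a bounded-depth cleanup after which, for every separating $4$-cycle $D$, the set of colorings of $D$ that extend into a chosen side of $D$ is one of a short list of flexible types --- all proper colorings of $D$, or those monochromatic on a prescribed diagonal, and a couple more --- flexible enough that the side can be replaced by a small triangle-free gadget without losing any coloring. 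Designing such a reduction and, above all, controlling how these local surgeries interact with one another and with the precolored vertex $v$ is where essentially all of the difficulty lies; I expect this to be the bulk of any proof, and it is the reason the statement is at present only a conjecture.
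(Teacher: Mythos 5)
There is nothing in this paper to compare your attempt against: the statement is stated here only as Conjecture~\ref{conj-fourext}, the paper gives no proof of it, and its only engagement with it is to show (via Theorem~\ref{thm-supernosep4}, by observing that the single-precolored-vertex anomaly would then be strongly locally extendable with some margin) that it implies Conjecture~\ref{conj-farsv}, and to record that it was later proved by Dvo\v{r}\'ak and Lidick\'y~\cite{cylgen-part2}. So the honest assessment of your submission is on its own terms: as you say yourself, it is a research plan, not a proof, and the entire load-bearing step is missing. Concretely, the claim that one can find a cycle $C'$ at bounded distance from $C$ such that extending $\psi|_C$ is equivalent to extending one of a bounded list of canonical colorings of $C'$, justified by ``only boundedly much information can cross a small annulus,'' is not sound as stated in the near-quadrangulated setting: Lemma~\ref{lemma-cyl} of this very paper shows that in a long quadrangulated cylinder a precoloring of one boundary cycle constrains the colorings of the other boundary (the winding numbers must cancel) no matter how long the cylinder is, so the influence of $\psi|_C$ does \emph{not} decay with distance in general. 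The reason the conjecture should nevertheless hold is the one you gesture at --- a single vertex imposes no winding constraint --- but turning that heuristic into an argument, together with the treatment of separating $4$-cycles whose interior may contain $v$ arbitrarily close to them (where, as you correctly note, the distance hypothesis is not inherited by either side), is precisely the content of the problem.

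For the record, the eventual proof in~\cite{cylgen-part2} does not follow your outline of iterated local surgeries near $C$; it goes through a fine-structure (characterization) theorem for $4$-critical triangle-free plane graphs with two precolored $4$-cycles, from which the $4$-cycle-plus-distant-vertex statement is deduced. Your paragraph identifying separating $4$-cycles as the obstruction separating Conjecture~\ref{conj-fourext} from Theorem~\ref{thm-nosep4sp} is accurate and shows a good grasp of the landscape, but nothing in the proposal closes that gap, so it cannot be credited as a proof or as an approximation of one.
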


If an $n$-vertex planar triangle-free graph $G$ has bounded maximum degree, then we can select a subset $S_1$ of its vertices of size $\Omega(n)$ such that
the distance between any two of vertices of $S_1$ is at least $d$.  If $G$ does not contain separating $4$-cycles, then by Theorem~\ref{thm-nosep4sp}, we can $3$-color $G$ so that
all vertices of $S_1$ have prescribed colors.  By choosing the colors of vertices in $S_1$,
we obtain exponentially many 3-colorings of $G$.  This solves a special case of a conjecture of Thomassen~\cite{thom-many} that all triangle-free planar graphs have exponentially
many 3-colorings.

\begin{corollary}\label{cor-expon}
For every $k\ge 0$, there exists $c>1$ such that every planar triangle-free graph $G$ of maximum degree at most $k$ and without
separating $4$-cycles has at least $c^{|V(G)|}$ 3-colorings.
\end{corollary}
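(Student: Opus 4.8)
The plan is to deduce this directly from Theorem~\ref{thm-nosep4sp} by precoloring a suitably spread-out set of vertices of size linear in $|V(G)|$ and then counting. Let $d\ge 2$ be the constant supplied by Theorem~\ref{thm-nosep4sp} and fix $k\ge 0$. One first records the crude estimate that in a graph of maximum degree at most $k$, the ball of radius $d-1$ about any vertex contains at most $N\colonequals 1+k+k^2+\cdots+k^{d-1}$ vertices (at each level of a breadth-first search, each vertex of the previous level contributes at most $k$ new vertices). The claim is then that $c\colonequals 3^{1/N}>1$ has the required property.

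Given $G$ as in the statement, fix a plane embedding, so that Theorem~\ref{thm-nosep4sp} applies to $G$ (it is triangle-free and has no separating $4$-cycle). Next, choose $S_1\subseteq V(G)$ to be a \emph{maximal} set of vertices that are pairwise at distance at least $d$. By maximality, every vertex of $G$ lies within distance $d-1$ of some vertex of $S_1$ (otherwise it could be added to $S_1$), so $V(G)$ is covered by the $|S_1|$ balls of radius $d-1$ centered at the vertices of $S_1$; hence $|V(G)|\le N\,|S_1|$, that is, $|S_1|\ge |V(G)|/N$.

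Now, for each assignment of colors $c_v\in\{1,2,3\}$ to the vertices $v\in S_1$, apply Theorem~\ref{thm-nosep4sp} with this $S_1$, with $S_2=S_3=\emptyset$, and with the prescribed colors $c_v$. The hypotheses hold: $d\ge 2$; the pairwise distances within $S_1\cup S_2\cup S_3=S_1$ are at least $d$ by construction; and the requirement that every triangle of $G$ belong to $S_2$ is vacuous because $G$ is triangle-free. This yields a proper $3$-coloring of $G$ that agrees with the chosen assignment on $S_1$. Distinct assignments on $S_1$ give rise to $3$-colorings that already differ on $S_1$, hence are distinct, so $G$ has at least $3^{|S_1|}\ge 3^{|V(G)|/N}=c^{|V(G)|}$ distinct $3$-colorings, as desired.

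I expect essentially no obstacle beyond invoking Theorem~\ref{thm-nosep4sp} itself: the only genuine ingredients are the elementary ball-size bound, which forces $|S_1|$ to be a constant fraction of $|V(G)|$, and the trivial check that the empty sets $S_2,S_3$ meet the hypotheses of that theorem. The single point that even requires a word is making sure that the ``no separating $4$-cycle'' condition, stated for the planar graph $G$, is read as a property of the particular plane graph on which Theorem~\ref{thm-nosep4sp} is applied.
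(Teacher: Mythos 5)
Your proposal is correct and follows essentially the same route as the paper, which derives Corollary~\ref{cor-expon} in the paragraph preceding its statement by choosing a linear-size set $S_1$ of pairwise $d$-distant vertices and invoking Theorem~\ref{thm-nosep4sp} with prescribed colors on $S_1$ (and $S_2=S_3=\emptyset$). Your write-up merely fills in the routine details (maximal $d$-distant set, ball-size bound $N=1+k+\cdots+k^{d-1}$, $c=3^{1/N}$), all of which are fine.
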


While the current paper was undergoing review and revisions, Conjecture~\ref{conj-fourext} was confirmed to be true by Dvo\v{r}\'ak and Lidick\'y~\cite{cylgen-part2}.
Consequently, Conjecture~\ref{conj-farsv} is true as well, and in Corollary~\ref{cor-expon}, the assumption that there are no separating $4$-cycles can be dropped.

The rest of the paper is structured as follows.  In the next section, we state several previous results which we
need in the proofs.  In Section~\ref{sec-collaps}, we study the structure of graphs where no $4$-faces can
be collapsed without decreasing distances between anomalies, showing that they contain long cylindrical quadrangulated subgraphs.
In Section~\ref{sec-cyl}, we study the colorings of such cylindrical subgraphs.
Finally, in Section~\ref{sec-havel}, we prove a statement generalizing Theorems~\ref{havel} and \ref{thm-nosep4sp}.

\subsubsection*{Proof outline}

Let us finish the introduction by describing the main ideas of the proof of Theorem~\ref{havel}.

To deal with the aforementioned problems with separating $4$-cycles,
as well as with other technicalities arising in the argument,
we are actually going to prove a stronger result:  In the situation of Theorem~\ref{havel}, if either $C$ is a $4$-cycle in $G$,
or a $5$-cycle in $G$ disjoint from all triangles, and $\psi$ is a $3$-coloring of $C$,
then $\psi$ extends to a $3$-coloring of $G$.  Then we can without loss of generality assume $G$ has no separating $4$-cycles:
Otherwise, $G=G_1\cup G_2$ for proper induced subgraphs $G_1$ and $G_2$ intersecting in a $4$-cycle $K$, with $C\subset G_1$, and we can use induction to
first extend $\psi$ to a $3$-coloring of $G_1$, then extend the resulting coloring of $K$ to $G_2$.

Suppose now for a contradiction $G$ is a counterexample with $|V(G)|+|E(G)|$ minimum; clearly, the graph $G$ is connected.  Let $t$ denote the number of triangles in $G$.
We have $t\ge 2$, as otherwise $\psi$ extends to a $3$-coloring of $G$ by a result of Aksionov~\cite{aksenov},
see Lemma~\ref{lem:aksionov}.
By the main result of the previous paper in this series~\cite{trfree4}, see Theorem~\ref{mainlemma} below,
the minimality of $G$ and the fact that $G$ does not contain separating 4-cycles implies that the total length of $(\ge\!5)$-faces of $G$
is at most $\eta t$, for a constant $\eta\ll d$.
Since $G$ is connected, $t\ge 2$, and every two triangles in $G$ are at distance at least $d$ from each other,
observe that for some triangle $T\subset G$, there exist integers $a\le b<d/2$ such that $b-a=\Omega(d/\eta)$,
all faces of $G$ whose distance from $T$ is between $a$ and $b$ have length $4$, the total length of $(\ge\!5)$-faces of $G$
at distance less than $a$ from $T$ is at most $2\eta$, and $C$ is at distance more than $b$ from $T$.

Let $R$ denote the part of $G$ at distance between $a$ and $b$ from $T$, and let $f$ be a $4$-face in $R$.  Let $G'$ be the graph obtained
from $G$ by identifying two vertices $v_1$ and $v_2$ that are opposite on $f$ to a single vertex $v$.
If $G'$ satisfies the assumptions of the theorem, then $\psi$ extends to a $3$-coloring of $G'$ by the minimality of $G$, and
giving $v_1$ and $v_2$ the color of $v$, we obtain a $3$-coloring of $G$ extending $\psi$.  This is a contradiction, and thus the
described identification either creates a triangle, or decreases the distance between two triangles of $G$ (one of these triangles
necessarily has to be $T$, since $f$ is at distance less than $d/2$ from $T$).  This has to be the case for every $4$-face in $R$,
and as we show in Section~\ref{sec-collaps}, this is basically only possible if $R$ contains a regular cylindrical grid $R'$
whose length is significantly larger than its circumference.

Let $C_1$ and $C_2$ be the boundary cycles of this long cylindrical grid.  In Section~\ref{sec-cyl}, we use the connection between 3-colorings
and nowhere-zero 3-flows to show that any precoloring of $C_1\cup C_2$ satisfying a certain simple constrain (winding numbers on $C_1$ and $C_2$ match)
extends to a $3$-coloring of $R'$.  This enables us to finish the argument: We cut $G$ in the middle of $R'$,
obtaining two subgraphs $H_1$ and $H_2$ with $C\subseteq H_1$.  For $i\in \{1,2\}$, we fill in the newly created face of $H_i$
by a subgraph with a face bounded by a cycle $C'_i$ of length at most five and all other faces of length four,
obtaining a plane graph $H'_i$.  By the minimality of $G$, we can extend $\psi$ to a $3$-coloring $\varphi_1$ of $H'_1$,
color $C'_2$ the same way as $\varphi_1$ colors $C'_1$, and extend this coloring to a $3$-coloring $\varphi_2$ of $H'_2$.
This is easily seen to ensure that the winding numbers on $C_1$ and $C_2$ in these colorings match.
Hence, the coloring of $C_1\cup C_2$ given by $\varphi_1$ and $\varphi_2$ extends to a $3$-coloring $\varphi_3$ of $R'$.
We can now combine the restrictions of $\varphi_1$ and $\varphi_2$ to $H_1-V(R')$ and $H_2-V(R')$ with $\varphi_3$
to obtain a $3$-coloring of $G$ extending $\psi$.

In the more general setting of Theorem~\ref{thm-nosep4sp}, there are further complications arising from the fact that we need to avoid
creating separating 4-cycles (or at least, creating separating 4-cycles too close to the anomalies) and that we need to handle the case
there is only one anomaly, essentially proving the analogue of Lemma~\ref{lem:aksionov} for a graph with one anomaly sufficiently far away
from a precolored $(\le\!5)$-cycle.

\section{Previous results}

We use the following lemma of Aksionov~\cite{aksenov}.

\begin{lemma}
\mylabel{lem:aksionov}
Let $G$ be a plane graph with at most one triangle, and let $C$ be either
the null graph or a facial cycle of $G$ of length at most five.
If $C$ has length five and $G$ contains a triangle $T$, also assume that 
$C$ and $T$ are edge-disjoint.
Then every $3$-coloring of $C$ extends to a $3$-coloring of $G$.
\end{lemma}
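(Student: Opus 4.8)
The plan is to argue by induction on $|V(G)|+|E(G)|$: take a counterexample $G$, with precolored cycle $C$ and $3$-coloring $\psi$ of $C$, minimizing this quantity, and derive a contradiction. First come the routine reductions. We may assume $C$ (if nonnull) bounds a face of $G$, that $G$ is connected, and in fact $2$-connected, since otherwise $\psi$ extends block by block; and every vertex of $V(G)\setminus V(C)$ has degree at least $3$, since a vertex of degree at most $2$ can be deleted and colored last. Next, $C$ has no chord: a chord of a cycle of length at most $5$, together with two subpaths of $C$, always creates a triangle distinct from the unique triangle of $G$ (when one exists) or, if $|C|=5$, a triangle sharing edges with $C$, both forbidden by the hypotheses (for $|C|\le 4$ a chord even creates two triangles). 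Finally, $G$ has no separating cycle of length at most $5$: such a cycle $K$ splits $G$ into $G_1\supseteq C$ and $G_2$; we apply induction to $G_1$ with precolored cycle $C$, restrict the resulting coloring to $K$, and apply induction again to $G_2$ with precolored cycle $K$. Here one checks that the hypotheses are inherited---the unique triangle, if any, lies in exactly one of $G_1,G_2$, and the edge-disjointness condition needed when $|C|=5$ or $|K|=5$ is preserved.

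After these reductions $G$ is a $2$-connected plane graph with outer cycle $C$ of length at most $5$ (or null), no chord of $C$, no separating cycle of length at most $5$, minimum degree at least $3$ off $C$, and at most one triangle $T$. In a $2$-connected plane graph every cycle either bounds a face or is separating, so every cycle of length at most $5$ other than $C$ bounds a face; in particular $T$, if it exists, is a facial triangle, and (since a second triangle is forbidden) no two vertices of $T$ have a common neighbor outside $T$. The main reduction tool for the interior is collapsing a $4$-face: if $f=v_1v_2v_3v_4$ bounds a face, identifying the opposite vertices $v_1$ and $v_3$ keeps the graph plane, and creates no parallel edge and no new triangle unless $v_1$ and $v_3$ have a common neighbor off $f$. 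When the collapse produces a valid smaller instance of the lemma (no new triangle, and no interference with $C$ when $|C|=5$), induction extends $\psi$ and we pull the coloring back by giving $v_1,v_3$ the color of their image---a contradiction. Hence for every $4$-face, each pair of opposite vertices has a common neighbor off the face, or the face lies close to $C$ or to $T$; this is the structural information that feeds the final count.

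The endgame is a discharging argument. Assign each vertex $v$ the charge $d(v)-4$ and each face $f$ the charge $\ell(f)-4$; by Euler's formula the charges sum to $-8$. Every internal face has length at least $4$ except possibly $T$, whose charge is $-1$; the outer face contributes $\ell(C)-4\le 1$; vertices of $V(G)\setminus V(C)$ of degree $3$ have charge $-1$; and all other objects have nonnegative charge. Using the restrictions above---no chord of $C$, no short separating cycle, the $K_{2,3}$-type structure forced around $4$-faces, and the control on where degree-$3$ vertices and $4$-faces may appear relative to $T$ and $C$---the plan is to move charge from faces of length at least $4$ and from high-degree vertices onto the deficient degree-$3$ vertices and onto $T$, ending with every object except (part of) $C$ nonnegative, which contradicts the total $-8$. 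I expect the main obstacle to be twofold, and to absorb essentially all the work: getting the local case analysis around the single triangle $T$ and around the short precolored cycle $C$ to close---this is precisely where the $|C|=5$ edge-disjointness hypothesis is essential, and where Gr\"unbaum's original argument went wrong---and designing discharging rules that simultaneously handle the nearly triangle-free interior (as in the proof of Theorem~\ref{grotzsch}) and the boundary effects of $C$ together with the lone triangle.
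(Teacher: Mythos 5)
First, note that the paper does not prove this lemma at all: it is quoted as a known theorem of Aksionov~\cite{aksenov}, a genuinely nontrivial strengthening of Gr\"otzsch's theorem, so there is no internal proof to match your argument against. Judged on its own, your proposal is an outline in the general spirit of such proofs (minimal counterexample, boundary reductions, collapsing $4$-faces, discharging), but it does not contain the proof: by your own admission the discharging rules and the case analysis around $T$ and around $C$ ``absorb essentially all the work,'' and these are precisely the content of the theorem. Moreover, the endgame as set up cannot yield a contradiction in the form you state it. With charges $d(v)-4$ and $\ell(f)-4$ the total is $-8$, but vertices of $C$ are allowed to have degree $2$, so $C$ together with its incident vertices can legitimately carry total charge $-8$ or less (for $|C|=5$ with all five vertices of degree $2$ one already gets $1-10=-9$); hence ``every object except part of $C$ nonnegative'' contradicts nothing. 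Any successful discharging must quantify exactly how much deficiency the boundary and the lone triangle can absorb and must exploit further reducible configurations beyond the single $4$-face collapse; otherwise the count would ``prove'' that no graph surviving your reductions exists, which is false since the lemma's conclusion does hold for many such graphs and the counting never used non-extendability except through the listed reductions.

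Two of the reductions are also flawed as stated. In the $4$-face collapse, identifying opposite vertices $v_1,v_3$ of a face $v_1v_2v_3v_4$ creates a new triangle exactly when $G$ contains a path of length three between $v_1$ and $v_3$ avoiding the face (a common neighbour off the face instead produces parallel edges), so the structure forced at a non-collapsible $4$-face is a $5$-cycle through $v_1$ and $v_3$, not the $K_{2,3}$-type configuration you propose to feed into the discharging. And in the elimination of separating $(\le\!5)$-cycles, a separating $5$-cycle $K$ may share an edge with the unique triangle $T$; on the side containing $T$ the precolored cycle $K$ and $T$ are then not edge-disjoint, so the induction hypothesis does not apply --- the lemma carries the edge-disjointness assumption precisely because it cannot simply be dropped --- and ``one checks that the hypotheses are inherited'' is exactly the delicate point rather than a routine verification. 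Since the decisive part of the argument is missing and these steps need repair, the proposal does not establish the lemma; the paper itself simply invokes Aksionov's result.
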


We also need several results from previous papers of this series.
Let $G$ be a graph and $C$ its subgraph.
We say that $G$ is \emph{$C$-critical} if $G\neq C$ and for every proper subgraph $G'$ of $G$ that includes $C$,
there exists a $3$-coloring of $C$ that extends to a $3$-coloring of $G'$, but does not extend to a $3$-coloring of $G$.
The following claim is a special case of the general form of the main result of~\cite{trfree4} (Theorem~4.1).

\begin{theorem}\mylabel{mainlemma}
There exists an absolute constant $\eta$ with the following property.
Let $G$ be a plane graph and $Z$ a (not necessarily connected) subgraph of $G$ such that all
triangles and all separating $4$-cycles in $G$ are contained in $Z$.
If $G$ is $Z$-critical, then $\sum|f|\le \eta|V(Z)|$, where the summation is over all faces $f$ of $G$ of length at least five.
\end{theorem}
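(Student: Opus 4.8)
The plan is to obtain Theorem~\ref{mainlemma} as a direct specialization of Theorem~4.1 of~\cite{trfree4}, the main technical result of the previous paper in this series. That theorem is stated for graphs embedded in a surface of bounded genus together with a family of precolored ``rings'' and a distinguished subgraph that records both the critical boundary and the allowed location of triangles; its conclusion is a linear isoperimetric inequality bounding a suitable weight of the embedded graph in terms of the total ring length, the genus, and the size of the distinguished subgraph. First I would instantiate this with the sphere (genus $0$), with no rings, and with the distinguished subgraph taken to be $Z$. The hypothesis that every triangle and every separating $4$-cycle of $G$ is contained in $Z$ is precisely the structural input Theorem~4.1 uses to control the part of $G$ lying away from the distinguished subgraph.

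Next I would reconcile the two notions of criticality. In~\cite{trfree4} criticality is defined relative to the precolored rings; with no rings this must be matched against $Z$-criticality as defined above, i.e.\ that $G\neq Z$ and for every proper subgraph $G'$ with $Z\subseteq G'$ some $3$-coloring of $Z$ extends to $G'$ but not to $G$. This is a routine but necessary verification: one checks that a $Z$-critical graph (after deleting components of $G-V(Z)$ that are irrelevant to the coloring extension problem, if Theorem~4.1 requires connectivity or a no-isolated-component condition) satisfies the hypotheses of Theorem~4.1, and conversely that the bound obtained for the reduced graph implies the claimed bound for $G$ itself.

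Finally I would translate the conclusion into the form stated here. Theorem~4.1 bounds a weight function that assigns to each face $f$ a quantity comparable to $|f|-4$, so that $4$-faces contribute nothing, plus ring and genus contributions that vanish in our setting, by a constant times $|V(Z)|$. Since $|f|\le 5(|f|-4)$ whenever $|f|\ge 5$, summing $|f|$ over all faces of length at least five is bounded by a fixed multiple of that weight, giving $\sum_{|f|\ge 5}|f|\le\eta|V(Z)|$ for an absolute constant $\eta$ obtained by absorbing the constants of~\cite{trfree4}.

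I expect the main obstacle to be not a new idea but the bookkeeping of the specialization: Theorem~4.1 of~\cite{trfree4} carries several auxiliary parameters (ring lengths, genus, the precise weight function and its constants), and one must instantiate all of them, confirm that the conditions governing the interaction of triangles with rings degenerate correctly in the ring-free case, and check that the resulting $\eta$ depends on nothing. Should some hypothesis of Theorem~4.1 fail to be literally implied by ours, the fallback is to re-run the discharging argument of~\cite{trfree4} directly in this simpler planar, ring-free setting, where the absence of rings and genus eliminates many of the delicate cases.
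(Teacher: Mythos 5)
Your proposal coincides with the paper's treatment: the paper gives no independent proof of Theorem~\ref{mainlemma}, stating it only as a special case of the general form of the main result (Theorem~4.1) of~\cite{trfree4}, which is exactly the specialization you outline. The remaining work you flag (matching the criticality notions and translating the weight/face-length bookkeeping, with the surface and ring parameters degenerating in the planar case) is precisely what that citation implicitly absorbs, so your route is the same as the paper's.
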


The following is a simple consequence of Corollary~5.3 of~\cite{trfree4}.
\begin{lemma}\label{lemma-extdisk}
Let $G$ be a triangle-free plane graph with the outer face $f_0$ bounded by a cycle and with another face $f$ bounded by a cycle of length at least $|f_0|-1$.
If every cycle separating $f_0$ from $f$ in $G$ has length at least $|f_0|-1$, then
every $3$-coloring of the cycle bounding $f_0$ extends to a $3$-coloring of $G$.
\end{lemma}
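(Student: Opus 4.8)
The plan is to derive the statement from Corollary~5.3 of~\cite{trfree4} by contradiction, using the theory of critical graphs.  Suppose $G$ is a counterexample, and among all counterexamples choose one minimizing $|V(G)|+|E(G)|$; write $C_0$ for the cycle bounding $f_0$.  Then some $3$-coloring $\psi$ of $C_0$ does not extend to $G$, and by minimality we may assume $G$ is $C_0$-critical (delete edges and vertices not in $C_0$ while the non-extendability is preserved; this does not destroy the hypotheses, since removing parts of $G$ cannot create short separating cycles or decrease $|f|$, and one checks $f$ survives because a $C_0$-critical graph has no vertex or edge that lies in no ``essential'' position).  In particular $G$ is $2$-connected and every face other than $f_0$ has length at least four, since $G$ is triangle-free.

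Next I would invoke the structural description of triangle-free $C_0$-critical plane graphs with $|C_0|\le 5$ supplied by~\cite{trfree4}; Corollary~5.3 there bounds the number of ``internal'' faces of length $\ge 5$ (equivalently, says that critical graphs are close to quadrangulations of the disk, with the $(\ge 5)$-faces charged to the length of the outer cycle).  The key point is that in our $G$ the face $f$ is, by hypothesis, a face of length at least $|f_0|-1\ge 3$, hence of length at least $4$; so $f$ is one of the ``big'' faces allowed by the critical structure, and the hypothesis that every cycle separating $f_0$ from $f$ has length at least $|f_0|-1$ says precisely that $f$ cannot be ``shielded'' from $f_0$ by a short cycle.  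Combining the isoperimetric/critical bound with this non-shielding hypothesis should force $G$ to have no room for a $(\ge 5)$-face other than possibly $f$ itself, and then a direct counting argument (Euler's formula with all faces but $f_0$ and $f$ of length exactly four) contradicts $C_0$-criticality — a quadrangulation of a disk with one designated inner $(\ge\!4)$-face and outer cycle of length $\le 5$ has no critical edges.

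Concretely the steps are: (i) reduce to a $2$-connected triangle-free $C_0$-critical $G$ with $3\le|f|$ and $|C_0|\le 5$; (ii) apply Corollary~5.3 of~\cite{trfree4} to get the structural/counting bound on $G$; (iii) use the ``no short separating cycle'' hypothesis to rule out the configurations in that structure theorem in which a small cycle cuts off part of the disk, leaving only the possibility that the disk bounded by $C_0$ is an ``elementary'' region containing $f$; (iv) in that elementary region run the discharging or direct Euler computation to conclude $G$ cannot be $C_0$-critical, a contradiction.  The main obstacle is step~(iii): matching our distance-type hypothesis ``every separating cycle has length $\ge|f_0|-1$'' to whatever combinatorial invariant Corollary~5.3 of~\cite{trfree4} is phrased in terms of, and making sure the reduction in step~(i) to a critical subgraph really preserves both the face $f$ (with its length) and the separation hypothesis.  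If Corollary~5.3 is already stated in the convenient ``precoloring extends unless a short separating cycle obstructs it'' form, then steps~(ii)--(iv) collapse to a one-line deduction and all the work is in the bookkeeping of step~(i).
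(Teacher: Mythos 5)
The paper gives no written argument for this lemma: it is quoted as a direct consequence of Corollary~5.3 of~\cite{trfree4}, and the intended deduction is the one you sketch in outline (pass to a $C_0$-critical subgraph and play the hypotheses against the face-size bound for critical graphs). However, your execution has genuine gaps. First, you restrict to $|C_0|\le 5$ in step~(i); the lemma has no bound on $|f_0|$, and in this paper it is applied (in Corollary~\ref{cor-cyl}, and implicitly through Lemma~\ref{lemma-elim4} only in the short case) to outer cycles of arbitrary length, so any argument that needs $|C_0|\le 5$ proves the wrong statement. Second, your reduction step asserts that $f$ itself survives, with its length, as a face of the critical subgraph; it need not, and the parenthetical justification is not an argument. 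The correct move --- and the only place the separating-cycle hypothesis is actually used --- is that a $C_0$-critical graph $G'$ is $2$-connected, so the face $f'$ of $G'$ whose closure contains $f$ is bounded by a cycle of $G$, and that cycle either bounds $f$ or separates $f_0$ from $f$ in $G$; hence $|f'|\ge |f_0|-1$ by hypothesis. Your proposal never extracts this consequence, yet without it the hypothesis plays no role.

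Third, and most seriously, your concluding step~(iv) cannot work: after the reduction you face a plane triangle-free graph with outer cycle $C_0$, one long internal face, and all other faces of length four, and you claim a ``direct counting argument (Euler's formula)'' shows such a graph has no critical edges. Euler's formula is indifferent to criticality here: a quadrangulated annulus between $C_0$ and $f$ has exactly this face and degree structure, and whether it is critical is a genuine coloring question (this is what Section~\ref{sec-cyl} of the paper is about). Indeed, for $|C_0|\ge 6$ there are $C_0$-critical triangle-free plane graphs all of whose internal faces are $4$-faces --- already a $6$-cycle with one chord, or quadrangulations carrying the winding-number obstruction of Propositions~\ref{wsum0} and~\ref{wC4} --- so ``no critical edges in a near-quadrangulation'' is false in the generality you need. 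The contradiction has to come from the quantitative content of Corollary~5.3 of~\cite{trfree4} itself (a bound of the shape $\sum_{g\ne f_0}(|g|-4)\le |f_0|-6$ for $C_0$-critical triangle-free plane graphs), which directly excludes an internal face of length at least $|f_0|-1$ and leaves nothing for an Euler count to do. In short, the skeleton (criticality plus the cited corollary) is right, but the two load-bearing steps --- how the separating-cycle hypothesis controls the face containing $f$ after passing to a critical subgraph, and what actually produces the final contradiction --- are missing or would fail as written.
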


Finally, let us state a basic property of critical graphs.

\begin{proposition}\label{prop-sg}
Let $G$ be a graph and $C$ its subgraph such that $G$ is $C$-critical.
If $G=G_1\cup G_2$, $C\subseteq G_1$ and $G_2\not\subseteq G_1$, then $G_2$ is $(G_1\cap G_2)$-critical.
\end{proposition}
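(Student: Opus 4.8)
\section*{Proof proposal}

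The plan is to unwind the definition of criticality and verify the two required conditions for $G_2$ directly. The first condition, that $G_2\neq G_1\cap G_2$, is immediate: since $G_2\not\subseteq G_1$, there is a vertex or an edge of $G_2$ that does not lie in $G_1$, hence does not lie in $G_1\cap G_2$.

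For the second condition, I would fix an arbitrary proper subgraph $G_2'$ of $G_2$ with $G_1\cap G_2\subseteq G_2'$ and set $G'=G_1\cup G_2'$. The key observation is that $G'$ is a proper subgraph of $G$ containing $C$. It contains $C$ because $C\subseteq G_1\subseteq G'$. For properness, choose a vertex or edge $x$ of $G_2$ not in $G_2'$; since $G_1\cap G_2\subseteq G_2'$, we have $x\notin G_1\cap G_2$, so $x\notin G_1$, and together with $x\notin G_2'$ this gives $x\notin G_1\cup G_2'=G'$, while $x\in G$. (For an edge $x$ one should note that ``$x\notin G_1$'' means the edge itself is absent even if its endpoints are present, which is exactly what is needed here.) Now applying the $C$-criticality of $G$ to the subgraph $G'$ yields a $3$-coloring $\varphi$ of $C$ and an extension $\psi$ of $\varphi$ to a $3$-coloring of $G'$ such that $\varphi$ does not extend to a $3$-coloring of $G$.

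It remains to check that $\psi$ restricted to $G_1\cap G_2$ is a coloring of the sort required by the definition. Since $G_2'\subseteq G'$, the restriction $\psi|_{G_2'}$ is a $3$-coloring of $G_2'$ extending $\psi|_{G_1\cap G_2}$. Suppose for contradiction that $\psi|_{G_1\cap G_2}$ extended to a $3$-coloring $\chi$ of $G_2$. Because $V(G_1\cap G_2)=V(G_1)\cap V(G_2)$, the colorings $\psi$ of $G_1$ and $\chi$ of $G_2$ agree on every common vertex, so their union is a well-defined coloring of $G=G_1\cup G_2$; it is proper since every edge of $G$ lies in $G_1$ or in $G_2$, and it extends $\varphi$ since $C\subseteq G_1$. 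This contradicts the choice of $\varphi$. Hence $\psi|_{G_1\cap G_2}$ does not extend to a $3$-coloring of $G_2$, which is precisely what the definition of $(G_1\cap G_2)$-criticality demands.

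I do not expect a genuine obstacle here: the only point requiring any care is the properness argument for $G'$ (distinguishing an absent edge from present endpoints) together with the remark that the two partial colorings glue along the whole common vertex set; everything else follows mechanically from the definition of criticality.
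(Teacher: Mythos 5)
Your proof is correct, and it is the standard argument intended here: the paper states Proposition~\ref{prop-sg} without proof as a basic property, and the expected justification is exactly your construction of $G'=G_1\cup G_2'$, application of the $C$-criticality of $G$, and gluing of the two partial colorings along $G_1\cap G_2$. The two points you flag (properness of $G'$ at the level of edges, and agreement on the full common vertex set) are indeed the only details needing care, and you handle both correctly.
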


\section{Structure of graphs without collapsible $4$-faces}\label{sec-collaps}

Essentially all papers dealing with $3$-colorability of triangle-free planar graphs
first eliminate $4$-faces by identifying their opposite vertices, thus reducing the
problem to graphs of girth $5$.  However, this reduction might decrease distances
in the resulting graph, which constrains its applicability for the problems we consider.  In this section, we give
a structural result on graphs in that no $4$-face can be reduced.

Let $F$ be a cycle in a graph $G$, and let $S\subseteq V(G)$.
We say that the cycle $F$ is {\em $S$-tight} if $F$ has length four and 
the vertices of $F$ can be numbered $v_1,v_2,v_3,v_4$ in order
such that for some integer $t\ge0$ the vertices
$v_1,v_2$ are at distance exactly $t$ from $S$, and the
vertices $v_3,v_4$ are at distance exactly $t+1$ from $S$.
We say that a face is $S$-tight if it is bounded by an $S$-tight cycle.

A triple $(G,\SS,C)$ is a \emph{scene} if $G$ is a connected plane graph,
$\SS$ is a family of non-empty subsets of $V(G)$ each of which induces a connected subgraph of $G$,
and $C$ is either the null graph $\varnothing$ or a cycle of length at most five bounding the outer face of $G$.
For a positive integer $d$, the scene is \emph{$d$-distant} if for all distinct $S,S'\in \SS$, the distance
between $S$ and $S'$ in $G$ is at least $d$.

\begin{lemma}
\mylabel{lem:distcrit}
Let $d\ge1$ be an integer and let $(G,\SS,C)$ be a $2d$-distant scene.
Let $F$ be a cycle in $G$ of length four and assume that for each pair $u,v$ of diagonally opposite vertices
of $F$, two distinct sets in $\SS$ are at distance
at most $2d-1$ in the graph obtained from $G$ by identifying $u$ and $v$.
Then there exists a unique set $S_0\in\SS$ at distance at most $d-1$ from $F$.
Furthermore, $F$ is $S_0$-tight.
\end{lemma}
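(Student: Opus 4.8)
The plan is to argue entirely with the graph-distance metric (planarity does not seem to be needed). Fix a cyclic labelling $F=x_1x_2x_3x_4$, so that the diagonally opposite pairs are $\{x_1,x_3\}$ and $\{x_2,x_4\}$, and for $S\in\SS$ write $d_S(v)$ for the distance in $G$ from $v$ to $S$. The first step is to reformulate the identification hypothesis metrically. If a diagonal pair, say $\{x_1,x_3\}$, is identified into one vertex $w$, then a shortest path between two sets in the resulting graph either avoids $w$ --- in which case it is a path of $G$ between those sets, hence of length at least $2d$ by $2d$-distance --- or passes through $w$, in which case it splits into a walk from the first set to $\{x_1,x_3\}$ followed by a walk from $\{x_1,x_3\}$ to the second set. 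So from the hypothesis I extract, for each of the two diagonal pairs, distinct $S,S'\in\SS$ with $\min(d_S(x_1),d_S(x_3))+\min(d_{S'}(x_1),d_{S'}(x_3))\le 2d-1$ (and the analogue for $\{x_2,x_4\}$); moreover, since $\mathrm{dist}_G(S,S')\le d_S(v)+d_{S'}(v)$ would contradict $2d$-distance were the two minima attained at the same vertex, they are attained at the two different ends of the pair. This is the only use I make of the identification hypothesis.

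Next I would set $\TT=\{S\in\SS:\mathrm{dist}_G(S,F)\le d-1\}$. The smaller of the two minima above is at most $d-1$, so each diagonal hypothesis places a set into $\TT$ and $\TT\neq\varnothing$; the target is $|\TT|=1$. Suppose $S,S'\in\TT$ are distinct and let $x_i,x_j\in V(F)$ realize $\mathrm{dist}_G(S,F)$ and $\mathrm{dist}_G(S',F)$. From $2d\le\mathrm{dist}_G(S,S')\le d_S(x_i)+\mathrm{dist}_G(x_i,x_j)+d_{S'}(x_j)\le(d-1)+2+(d-1)=2d$ (using $d_S(x_i),d_{S'}(x_j)\le d-1$ and $\mathrm{dist}_G(x_i,x_j)\le 2$) all inequalities are equalities, so $d_S(x_i)=d_{S'}(x_j)=d-1$ and $\mathrm{dist}_G(x_i,x_j)=2$, whence $x_i,x_j$ are the two ends of one diagonal $\Delta$; write $\Delta'$ for the other diagonal. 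Two more applications of the triangle inequality pin down the distance profile of $S$ around $F$: it is $d-1$ at $x_i$, $d+1$ at the other end of $\Delta$, and $d$ at each end of $\Delta'$; likewise for $S'$ with $x_i,x_j$ swapped. Running this profile argument against $S$, every member of $\TT$ (including $S,S'$) is at distance exactly $d$ from both ends of $\Delta'$. But the identification hypothesis applied to $\Delta'$ hands us a member of $\TT$ within distance $d-1$ of an end of $\Delta'$, a contradiction. Hence $|\TT|=1$; let $S_0$ be its unique element.

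It remains to show that $F$ is $S_0$-tight. Applying the identification hypothesis to $\{x_1,x_3\}$ gives distinct $A,B\in\SS$ with $\min(d_A(x_1),d_A(x_3))+\min(d_B(x_1),d_B(x_3))\le 2d-1$; one of them, say $A$, attains the smaller minimum, so $A\in\TT=\{S_0\}$ and $A=S_0$, while $B\notin\TT$ gives $d_B(x_1),d_B(x_3)\ge d$. Feeding this back in, together with $\mathrm{dist}_G(S_0,B)\ge 2d$ and the triangle inequality at $x_1$ and at $x_3$, forces $d_{S_0}(x_1)\neq d_{S_0}(x_3)$; symmetrically $d_{S_0}(x_2)\neq d_{S_0}(x_4)$. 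Now $a_i:=d_{S_0}(x_i)$ is a cyclic $4$-term integer sequence whose consecutive terms differ by at most $1$ and whose two opposite pairs each consist of distinct values; an elementary case analysis shows the only such sequence is, up to rotation, $(t,t,t+1,t+1)$ with $t=\min_i a_i$. This is exactly what it means for $F$ to be $S_0$-tight (and $t\le d-1$ since $S_0\in\TT$).

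I expect the middle paragraph to be where the care is needed: one must run the triangle inequality correctly to obtain the full profile $(d-1,d,d+1,d)$ of a member of $\TT$, observe that the vertex of $F$ nearest such a member is thereby forced, and then invoke the identification hypothesis for the complementary diagonal without circularity. The first and last paragraphs are routine once the metric reformulation is in hand.
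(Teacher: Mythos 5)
Your argument is correct --- I checked the metric reformulation of the identification hypothesis, the equality analysis in the chain $2d\le d_S(x_i)+\mathrm{dist}_G(x_i,x_j)+d_{S'}(x_j)\le 2d$, the forced profiles $(d-1,d,d+1,d)$, the contradiction obtained by reusing the hypothesis for the complementary diagonal, and the concluding case analysis, and each step goes through --- but it takes a noticeably different route from the paper's. The paper's pivotal step is that the two diagonal hypotheses produce sets at distance at most $d-1$ from the \emph{adjacent} vertices $v_1$ and $v_2$, which must therefore coincide in a single set $S_0$ (their distance would otherwise be at most $d_1+d_2+1\le 2d-1$); uniqueness is then immediate (any set within $d-1$ of $F$ is within $2d-1$ of $S_0$), and tightness falls out by combining $d_2+d_4\le 2d-1$ with $d_1+d_4+1\ge 2d$ (and the symmetric inequalities) to get $d_1=d_2=t$ and then $d_3=d_4=t+1$ directly. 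You never use this coincidence step: you establish uniqueness by contradiction, deriving full distance profiles for two hypothetical nearby sets and invoking the identification hypothesis a second time on the other diagonal, and you obtain tightness separately from ``diagonally opposite distances differ'' together with an elementary case analysis of a cyclic integer sequence with unit steps. The paper's proof is a bit shorter and yields the exact value $t$ en route; yours cleanly separates uniqueness from tightness, and the observation that distinct opposite distances force the pattern $(t,t,t+1,t+1)$ is a transparent, reusable fact. One small wording issue: your claim that the two minima are ``attained at the two different ends'' should be read as ``can be assigned to different ends'' (a minimum may be attained at both ends of the diagonal); the inequality $d_S(x_1)+d_{S'}(x_3)\le 2d-1$ that you actually use is available in all cases, so nothing breaks.
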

\begin{proof}
Let the vertices of $F$ be $v_1,v_2,v_3,v_4$ in order.
By hypothesis there exist sets $S_1,S_2,S_3,S_4\in\SS$, where
$S_i$ is at distance $d_i$ from $v_i$, such that
$S_1\ne S_3$, $S_2\ne S_4$, $d_1+d_3\le 2d-1$, and $d_2+d_4\le 2d-1$.
From the symmetry we may assume that $d_1\le d-1$ and $d_2\le d-1$.
The distance between $S_1$ and $S_2$ is at most $d_1+d_2+1\le 2d-1$, and thus
$S_1=S_2$.  Let us set $S_0=S_1$.  If any $S\in\SS$ is at distance
at most $d-1$ from $F$, then the distance between $S$ and $S_0$ is
at most $2(d-1)+1<2d$, and thus $S=S_0$.  It follows that $S_0$ is the unique
element of $\SS$ at distance at most $d-1$ from $F$.

Note that $S_4\ne S_2=S_1$, and hence $d_1+d_4+1\ge2d$, because $S_1$ and $S_4$
are at distance at least $2d$.
This and the inequality $d_2+d_4\le 2d-1$ imply that $d_1\ge d_2$.
But there is a symmetry between $d_1$ and $d_2$, and hence an analogous
argument shows that $d_1\le d_2$.
Thus for $t:=d_1=d_2$ the vertices $v_1,v_2$ are both at distance $t$ from
$S_0=S_1=S_2$.  If $v_4$ were at distance $t$ or less from $S_0$, then $S_0$ and $S_4$
would be at distance at most $t+d_4=d_2+d_4\le 2d-1$, a contradiction.
The same holds for $v_3$ by symmetry, and hence $v_3$ and $v_4$ are
at distance $t+1$ from $S_0$; hence, $F$ is $S_0$-tight.
\end{proof}

We often use the following observation on vertices only incident with tight
faces.
\begin{observation}\label{obs:alter}
Let $(G,\SS,C)$ be a distant scene and let $v\in V(G)$ be a vertex such that for some $S\in\SS$,
every face incident with $v$ is $S$-tight.  Let $t$ be the distance between $v$ and $S$.
Then $v$ has even degree, and in the clockwise ordering of the neighbors of $v$ in the
drawing of $G$, every second neighbor is at distance exactly $t$ from $S$, while
every other neighbor is at distance $t-1$ or $t+1$ from $S$.
\end{observation}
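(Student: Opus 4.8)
The plan is to analyze the local picture around $v$ face by face, using only the definition of an $S$-tight face together with the triangle inequality for the distance function $\mathrm{dist}(\cdot,S)$. First I would fix the notation: let $u_1,u_2,\dots,u_k$ be the neighbors of $v$ in clockwise order, so that for each $i$ (indices mod $k$) the vertices $v,u_i,u_{i+1}$ lie on a common face $f_i$, which by hypothesis is $S$-tight and hence a $4$-cycle $v\,u_i\,w_i\,u_{i+1}$ for some fourth vertex $w_i$. Write $t=\mathrm{dist}(v,S)$ and $d_i=\mathrm{dist}(u_i,S)$. Since $u_i$ is adjacent to $v$, we have $d_i\in\{t-1,t,t+1\}$ for every $i$.

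The key step is to determine, for each $4$-face $f_i=v\,u_i\,w_i\,u_{i+1}$, which two of its four vertices are the ``near'' pair (at distance $r$ from $S$) and which two are the ``far'' pair (at distance $r+1$), for the integer $r\ge 0$ guaranteed by $S$-tightness. Because $v$ is on $f_i$, either $v$ is in the near pair (so $r=t$) or $v$ is in the far pair (so $r=t-1$). In an $S$-tight $4$-cycle the near pair and the far pair each consist of two consecutive vertices, so the partition of $\{v,u_i,w_i,u_{i+1}\}$ into near/far pairs is one of the two ``matchings along edges''. Examining the cyclic order $v,u_i,w_i,u_{i+1}$, the two ways to split into consecutive pairs are $\{v,u_i\}\mid\{w_i,u_{i+1}\}$ and $\{u_i,w_i\}\mid\{u_{i+1},v\}$. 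In the first case $v$ and $u_i$ are at the same distance from $S$ and $u_{i+1}$ is at that distance $\pm1$; in the second case $v$ and $u_{i+1}$ are at the same distance and $u_i$ is at that distance $\pm1$. In either case exactly one of $u_i,u_{i+1}$ is at distance exactly $t$ from $S$ and the other is at distance $t-1$ or $t+1$. Thus consecutive neighbors of $v$ alternate strictly between ``at distance $t$'' and ``at distance $t\pm1$''.

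This alternation around the whole cyclic sequence $u_1,\dots,u_k$ immediately forces $k$ to be even (an alternating $2$-coloring of a cycle has even length), and it shows that every second neighbor is at distance exactly $t$ from $S$ while every other neighbor is at distance $t-1$ or $t+1$ from $S$, which is exactly the conclusion. I would also double-check the boundary issue that a face incident with $v$ need not be bounded by a cycle through $v$ in a non-$2$-connected graph; but $S$-tightness is defined only for faces bounded by a $4$-cycle, so the hypothesis that \emph{every} face incident with $v$ is $S$-tight already guarantees the local structure is as described, and in particular that $v$ has degree $k$ with $k$ faces around it in the cyclic order used above.

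The main obstacle is purely bookkeeping: making the case analysis of ``which pair is near'' airtight, in particular ruling out degenerate configurations (e.g.\ $w_i=v$, or two consecutive faces sharing more than the edge $vu_{i+1}$) using simplicity of $G$ and the fact that a $4$-cycle has four distinct vertices. Once the per-face dichotomy above is established cleanly, the global alternation and the parity of $\deg(v)$ follow with no further work.
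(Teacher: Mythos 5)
Your proof is correct: the per-face analysis (in each $S$-tight $4$-face through $v$, the near and far pairs are consecutive, so exactly one of the two cycle-neighbors of $v$ is at distance exactly $t$ and the other at $t\pm1$), followed by the alternation argument around $v$, is precisely the elementary local argument intended here — the paper states this as an observation without proof. Your attention to the degenerate cases (face boundaries being genuine $4$-cycles through $v$, so $v$ is not a cut vertex and the rotation of angles at $v$ behaves as described) is exactly the right bookkeeping to make it airtight.
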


Let $G$ be a graph, let $S\subseteq V(G)$ and let $K$ be a cycle
in $G$. We say that $K$ is {\em equidistant} from $S$ if for some integer
$t\ge0$, every vertex of $K$ is at distance exactly $t$ from $S$.
We will also say that $K$ is equidistant from $S$ at distance $t$.

We say that a plane graph $H$ is a \emph{cylindrical quadrangulation with boundary faces $f_1$ and $f_2$}
if the distinct faces $f_1$ and $f_2$ of $H$ are bounded by cycles and all other faces of $H$ have length four.
The union of the cycles bounding $f_1$ and $f_2$ is called the \emph{boundary} of $H$.
The cylindrical quadrangulation $H$ is a \emph{joint} if $|f_1|=|f_2|$, every cycle of $H$ separating
$f_1$ from $f_2$ has length at least $|f_1|$ and the distance between $f_1$ and $f_2$ in $H$ is at least $4|f_1|$.
If $H$ appears as a subgraph of another plane graph $G$, we say that the appearance is \emph{clean} if
every face of $H$ except for $f_1$ and $f_2$ is also a face of $G$.
An \emph{$r\times s$ cylindrical grid} is the Cartesian product of a path with $r$ vertices and a cycle of length $s$.

Let $(G,\SS,C)$ be a scene, $R$ a cycle in $G$, and $S\in\SS$ a set disjoint from $R$.
Removing $R$ splits the plane into two open sets, and since $G[S]$ is connected, $S$ is contained
in one of them; let $\Omega_S(R)$ denote the other one.
We say $S$ is \emph{tightly isolated by $R$} if $R$ is an equidistant cycle of length $s\ge 3$ at some distance $d_0\ge 1$ from $S$,
and for $d_1=d_0+2(s-2)(s+3)$, letting $V_G(S,R)$ be the set of vertices of $G$ at distance at most $d_1$ from $S$ that
are drawn in the closure of $\Omega_S(R)$, every face of $G$ drawn in $\Omega_S(R)$ and incident with a vertex of $V_G(S,R)$ is $S$-tight.

\begin{lemma}
\mylabel{lem:cylinder}
Let $(G,\{S\},\varnothing)$ be a scene.  If $S$ is tightly isolated by a cycle $R_0$ in $G$
and every vertex of $V_G(S,R_0)$ has degree at least three,
then $G$ contains a clean joint $H$ such that $V(H)\subseteq V_G(S,R_0)$.
\end{lemma}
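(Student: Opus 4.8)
The plan is to peel off concentric layers of $S$-tight $4$-faces around $R_0$, moving into $\Omega_S(R_0)$, to show that these layers assemble into a cylindrical quadrangulation whose circumference is non-increasing, and then to extract from it a long constant-circumference stretch; the quantity $2(s-2)(s+3)=\sum_{j=3}^{s}4j$ is precisely what a pigeonhole argument needs for this last step.

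\emph{Peeling off layers.} First I would prove by induction on $i=0,1,\dots,d_1-d_0$ that there is a cycle $R_i$ in $G$, chosen minimal, with $V(R_i)\subseteq V_G(S,R_0)$, equidistant from $S$ at distance $d_0+i$, drawn in the closure of $\Omega_S(R_0)$ and satisfying $\Omega_S(R_i)\subseteq\Omega_S(R_{i-1})$, such that the closed annular region $A_i$ between $R_{i-1}$ and $R_i$ is quadrangulated with every inner face of $A_i$ being a face of $G$, such that the subgraph of $G$ drawn in the complement of $\Omega_S(R_i)$ contains every vertex of $V_G(S,R_0)$ at distance at most $d_0+i$ from $S$, and such that $|R_i|\le|R_{i-1}|$; in particular $3\le|R_i|\le s$. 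The base case $i=0$ is the hypothesis. For the inductive step, since $d_0+i\le d_1$ every face of $G$ drawn in $\Omega_S(R_0)$ and incident with a vertex of $R_i$ is $S$-tight; applying Observation~\ref{obs:alter} to the vertices of $R_i$ shows that the faces of $G$ on the $\Omega_S(R_i)$-side of $R_i$ form a ring of $S$-tight $4$-faces, each having one edge on $R_i$, with consecutive faces of the ring sharing a rung joining $R_i$ to distance $d_0+i+1$ from $S$. The remaining edges of these faces trace a closed walk $W$ at distance $d_0+i+1$ from $S$, with $|W|\le|R_i|$; I would take $R_{i+1}$ to be the minimal cycle separating $S$ from the part of $G$ outside $W$. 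A shortest-path argument then shows that every vertex of $V_G(S,R_0)$ at distance $d_0+i+1$ from $S$ lies on $R_{i+1}$, or in the region $A_{i+1}$, or in one of the discs that $W$ ``pinches off'', and this is what lets the induction continue.

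\emph{Extracting the joint.} Given the cycles $R_0,\dots,R_{d_1-d_0}$, the sequence $|R_0|,|R_1|,\dots$ is non-increasing with values in $\{3,\dots,s\}$, so because $2(s-2)(s+3)=\sum_{j=3}^{s}4j$ a pigeonhole argument over the possible values yields some $\ell$ for which $|R_i|=\ell$ on a stretch of consecutive indices $a,a+1,\dots,b$ long enough that $b-a\ge 4\ell$. No pinching can occur inside this stretch, since a pinch strictly decreases the circumference, so each band $A_{i+1}$ with $a\le i<b$ is a clean quadrangulated annulus with both boundary cycles of length $\ell$; counting edges (each edge of $R_i$, and each edge of $R_{i+1}$, lies on exactly one inner face of $A_{i+1}$, so $A_{i+1}$ has exactly $\ell$ inner faces) forces $A_{i+1}$ to be a $2\times\ell$ cylindrical grid. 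Hence the union $H$ of $R_a,\dots,R_b$ and the bands between them is a cylindrical grid with boundary cycles $R_a$ and $R_b$ of equal length $\ell$, with every separating cycle non-contractible in $H$ and therefore of length at least $\ell$, with distance at least $4\ell$ between its boundary cycles, and with every inner face a face of $G$. Thus $H$ is a clean joint and $V(H)\subseteq V_G(S,R_0)$, as required.

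\emph{Main difficulty.} The heart of the argument is the inductive construction of the layers, specifically showing that $W$ closes up to a cycle $R_{i+1}$ with $|R_{i+1}|\le|R_i|$ while the bands stay clean. The walk $W$ need not be simple---it can pinch where two of its vertices coincide---and the hypothesis that every vertex of $V_G(S,R_0)$ has degree at least three is exactly what is needed here: a ``short'' pinch, which would leave the circumference unchanged, encloses a quadrangulated disc with a $4$-cycle boundary and hence an interior vertex of degree two in $V_G(S,R_0)$, a contradiction; the surviving pinches enclose larger discs and strictly decrease $|R_{i+1}|$. Carrying this out---pinning down $R_{i+1}$, keeping the pinched-off discs from spoiling later bands, and bounding the total circumference drop by $s-3$---is the bulk of the work.
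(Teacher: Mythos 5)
Your overall plan (peel equidistant layers outward from $R_0$, argue the circumference is non-increasing, then pigeonhole via $2(s-2)(s+3)=\sum_{j=3}^{s}4j$ to find a constant-circumference stretch of height $4\ell$) is a genuinely different organization from the paper's, but it has a real gap exactly at the step you yourself flag as ``the bulk of the work'', and the one concrete mechanism you offer there does not hold up. First, the interiors of the pinched-off (or chord-cut-off) regions are not controlled by the hypotheses: they lie in the closure of $\Omega_S(R_0)$, but their vertices may be at distance greater than $d_1$ from $S$, while tight isolation guarantees $S$-tight $4$-faces only at vertices of $V_G(S,R_0)$, and the minimum-degree-three hypothesis likewise applies only to $V_G(S,R_0)$. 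So you may not assert that a pinched-off disc is quadrangulated, nor run a degree argument on its interior. Second, even granting quadrangulation, the claim that a quadrangulated disc with a $4$-cycle boundary must contain an interior vertex of degree two is false: the cube drawn with a quadrilateral outer face quadrangulates the disc with a $4$-cycle boundary and all four interior vertices of degree three. (In your setting there are additional distance constraints that might rule such configurations out, but that argument is not supplied.) With short pinches not excluded and deep pinches uncontrolled, your inductive statement --- that each band $A_i$ is a clean quadrangulated annulus, that $|R_i|\le|R_{i-1}|$, and that later layers avoid the pinched discs --- is not established; in particular the cleanness of the bands inside the constant stretch, which is exactly what makes $H$ a clean joint, is missing.

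For comparison, the paper's proof is built so that no pinch analysis is ever needed: instead of peeling from $R_0$, it picks $j\in\{3,\dots,s\}$ \emph{minimum} such that some equidistant cycle $R$ of length $j$ lies in the closure of $\Omega_S(R_0)$ at distance at most $d(j)=d_0+2(s-j)(s+j+1)$ (the same telescoping you use), and grows a maximal clean $(p+1)\times j$ grid outward from $R$. If the grid stopped at height $p+1\le 4j$, then each vertex of the outer cycle $K$ has exactly one outward neighbor (Observation~\ref{obs:alter}, plus the fact that a chord of $K$ in $\Omega_S(K)$ would yield a shorter equidistant cycle still inside the distance window), the set of these neighbors induces a subgraph of minimum degree two, hence contains an equidistant cycle of length at most $j$ inside the window; minimality of $j$ forces its length to be exactly $j$, so the outward-neighbor map is injective --- pinching is impossible by the choice of $j$ --- and the grid extends, contradicting maximality of $p$. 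If you want to salvage your route, you would have to prove the monotonicity-plus-cleanness of the peeling from scratch, including an analysis of what can hide inside pinched regions beyond distance $d_1$; the paper's minimal-$j$ device is precisely what makes that unnecessary.
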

\begin{proof}
Let $s=|R_0|$ and let $d_0$ be the distance between $S$ and $R_0$ in $G$.
For an integer $j$, let $d(j)=d_0+2(s-j)(s+j+1)$.  Note that $d(j)+4j=d(j-1)$ for every $j$, $d_0=d(s)$, and
every vertex of $V_G(S,R_0)$ is at distance at most $d_1=d_0+2(s-2)(s+3)=d(2)$ from $S$.
Choose the smallest integer $j\in\{3,\ldots,s\}$ for that there exists
an equidistant cycle $R$ of length $j$ at distance $t$ from $S$ such that $d_0\le t\le d(j)$ and
$R$ is drawn in the closure of $\Omega_S(R_0)$; note this implies $V(R)\subseteq V_G(S,R_0)$.
Such an integer $j$ exists, since $R_0$ satisfies the requirements for $j=s$.
Let $p\le 4j$ be the maximum integer such that $G$ contains a clean $(p+1)\times |R|$ cylindrical grid $H$ with boundary faces $f_1$ and $f_2$
as a subgraph such that $f_1$ is bounded by $R$ and $f_2$ is bounded by an equidistant cycle $K$ at distance $t+p$ from $S$,
and $f_2$ is drawn in $\Omega_S(R)$; note this implies $V(H)\subseteq V_G(S,R_0)$.
Such an integer $p$ exists, since $R$ (treated as a $1\times |R|$ cylindrical grid) satisfies the requirements for $p=0$.

We claim that $p=4j$, and thus $H$ satisfies the conclusion of the theorem.
Suppose that $p\le 4j-1$.  Note that every vertex of $G$ drawn in $\Omega_S(K)$ is at distance at least $t+p+1$ from $S$.
Observe that $K$ has no chord contained in $\Omega_S(K)$, as otherwise there exists an equidistant cycle
of length less than $j$ at distance $t+p\le t+4j-1<d(j-1)$ from $S$ contradicting the minimality of $j$.
Hence, Observation~\ref{obs:alter} implies that every vertex $v\in V(K)$ has exactly one neighbor $v'$ drawn in $\Omega_S(K)$.

Let $Z$ be the subgraph of $G$ induced by $\{v':v\in V(K)\}$; note that $V(Z)$ consists exactly
of all vertices drawn in $\Omega_S(K)$ at distance $t+p+1\le t+4j\le d(j-1)$ from $S$, and in particular $V(Z)\subset V_G(S,R_0)$.
By the assumptions of this lemma, all vertices in $V(Z)$ have degree at least three in $G$, and thus Observation~\ref{obs:alter}
implies $Z$ has minimum degree at least two.  Consequently, $Z$ contains a cycle $Z'$.
Note that $Z'$ is equidistant at distance at most $d(j-1)$ from $S$ and $|Z'|\le|V(Z)|\le |K|=j$.
By the minimality of $j$, it follows that $|Z'|=j$, and thus $|V(Z)|=|K|$.  Therefore, $v_1'\ne v_2'$ for distinct vertices $v_1,v_2\in V(K)$.
We conclude that we can extend $H$ to a clean $(p+2)\times |R|$ cylindrical grid by adding $Z'$ and the edges $vv'$ for $v\in V(K)$,
contradicting the maximality of $p$.  This finishes the proof.
\end{proof}

Next, we consider the case that some of the relevant faces are not tight, but instead are near to a short separating cycle.
A $4$-face $f$ is \emph{attached} to a cycle $R$ if the boundary cycle of $f$ and $R$ intersect in a path of length two.
Let $d_2<d_3$ and $s$ be positive integers and let $(G,\SS,C)$ be a scene.
For $S\in\SS$, we say that a cycle $R$ \emph{separates $S$ from $C$} if $C$ is not the null graph, $R\neq C$, and $S$ is drawn
in the open disk bounded by $R$ (recall that $C$ bounds the outer face of $G$).
We say that the scene is \emph{$(d_2,d_3)$-tight} if for every $S\in\SS$, every $4$-face of $G$
at distance at least $d_2$ and at most $d_3$ from $S$ is bounded by $C$, or $S$-tight,
or attached to a $(\le\!6)$-cycle separating $S$ from $C$.
An \emph{$(S,d_2,d_3)$-slice} is a subset $L$ of vertices of $G$ such that
\begin{itemize}
\item each vertex $v\in L$ is at distance at least $d_2$ and at most $d_3$ from $S$,
\item if $v\in L$ has a neighbor in $G$ not belonging to $L$, then the distance between $S$ and $v$ is either exactly $d_2$ or exactly $d_3$, and
\item $L$ contains a vertex at distance exactly $d_3-1$ from $S$.
\end{itemize}
Note that the last two conditions imply that $L$ contains
vertices at any distance $d$ from $S$ such that $d_2\le d\le d_3-1$.  The \emph{interior} $L^\circ$ of $L$ is the set of vertices
at distance at least $d_2+1$ and at most $d_3-1$ from $S$.
When the parameters are clear from the context, we call $L$ just a slice.
For a positive integer $s$, we say that a set $S\in\SS$ is \emph{$(d_2,d_3,s)$-isolated} by an $(S,d_2,d_3)$-slice $L$ if
\begin{itemize}
\item $L\cap V(C)=\emptyset$ and every vertex of $L$ has degree at least three,
\item every face of $G$ incident with a vertex of $L$ has length four, and
\item every cycle $K\subseteq G[L]$ equidistant from $S$ has length at most $s$.
\end{itemize}

\begin{lemma}
\mylabel{lem:cylsep}
Let $d_2\ge 4$ and $s\ge 3$ be integers, let $d_3=d_2+34(s-2)(s+3)+474$, and let $(G,\{S\},C)$ be a $(d_2,d_3)$-tight scene.
If $S$ is $(d_2,d_3,s)$-isolated by a slice $L$, then $G$ contains a clean joint $H$ with $V(H)\subseteq L^\circ$.
\end{lemma}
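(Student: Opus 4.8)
The plan is to reduce Lemma~\ref{lem:cylsep} to Lemma~\ref{lem:cylinder} by showing that, after discarding the few faces that fail to be $S$-tight because they are attached to short separating cycles, the set $S$ is in fact tightly isolated (in the sense of the definition preceding Lemma~\ref{lem:cylinder}) by a suitable equidistant cycle lying inside the slice $L$. The point of the enormous constant $d_3-d_2 = 34(s-2)(s+3)+474$ is to leave room: we will spend a bounded amount of the available ``distance budget'' locating a clean equidistant cycle with only $S$-tight faces around it, and still have $2(s-2)(s+3)$ left over to feed into Lemma~\ref{lem:cylinder}.

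First I would set up the combinatorics of equidistant cycles in the slice. Since every vertex of $L$ has degree at least three and every incident face has length four, Observation~\ref{obs:alter} applies to each vertex of $L^\circ$: such a vertex has even degree, and its neighbors alternate between distance-$t$ vertices and distance-$(t\pm1)$ vertices. From this one gets, for each distance $t$ with $d_2 < t < d_3$, a collection of disjoint equidistant cycles at distance $t$ from $S$ contained in $G[L]$ that together ``separate'' the distance-$(<t)$ part from the distance-$(>t)$ part; by the isolation hypothesis each such cycle has length at most $s$. The standard argument (as in Lemma~\ref{lem:cylinder}) is that consecutive distance levels are linked by the alternating structure, so one can track a single cycle from one level to the next, and it can only shrink or stay the same length unless a chord or a non-tight face intervenes.

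The main obstacle, and the new ingredient compared to Lemma~\ref{lem:cylinder}, is handling the $4$-faces that are \emph{attached to a $(\le\!6)$-cycle $R'$ separating $S$ from $C$} rather than being $S$-tight. The key observation is that such a cycle $R'$ is short (length at most six), so a face attached to it can distort the equidistant-cycle structure only in a small neighborhood of $R'$; moreover two such separating cycles at very different distances from $S$ are ``nested,'' and the region between them at distances roughly between their two distance-values is quadrangulated and free of further such cycles — essentially because a $(\le\!6)$-cycle can only be $S$-tight-violating over an interval of distances of bounded length. Counting: there can be at most $O(s)$ ``bad'' distance levels overall (the budget $474$ plus multiples of $(s-2)(s+3)$ is chosen to dominate this), so among the $d_3-d_2$ consecutive distance levels in the slice there is a sub-interval of length at least $d_0' := 2(s-2)(s+3)$ over which all relevant faces are genuinely $S$-tight. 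Pick an equidistant cycle $R_0$ of some length $s_0\le s$ at the bottom of this clean sub-interval; then $S$ is tightly isolated by $R_0$ within $G$ (with the parameter $d_1$ in that definition corresponding to the top of the sub-interval), and every vertex of $V_G(S,R_0)\subseteq L^\circ$ has degree at least three.

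Finally I would invoke Lemma~\ref{lem:cylinder} applied to the scene $(G,\{S\},\varnothing)$ — or rather to $G$ with the outer face ignored, which is legitimate since $R_0$ and everything of interest lies strictly inside $C$ — to obtain a clean joint $H$ with $V(H)\subseteq V_G(S,R_0)\subseteq L^\circ$, which is exactly the conclusion. The one point requiring care in the write-up is bookkeeping the constant: I would verify that $d_3-d_2 = 34(s-2)(s+3)+474$ is large enough that, after subtracting the $O(s^2)$ distance-levels potentially spoiled by separating $(\le\!6)$-cycles and the $O(1)$ levels near the endpoints of the slice where the alternating structure of Observation~\ref{obs:alter} may fail, a clean window of length at least $2(s-2)(s+3)$ survives on which Lemma~\ref{lem:cylinder}'s hypotheses hold. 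I expect the attachment-to-separating-cycles analysis to be the genuinely delicate part; everything else is a repackaging of the arguments already used to prove Lemmas~\ref{lem:cylinder} and, in spirit, \ref{lem:distcrit}.
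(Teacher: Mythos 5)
Your overall skeleton---find a long run of distance levels on which every relevant face is $S$-tight, pick an equidistant cycle $R_0$ at the bottom of that run, observe that $S$ is then tightly isolated by $R_0$, and invoke Lemma~\ref{lem:cylinder}---is exactly one branch of the actual argument, and that part of your outline is sound (including the bookkeeping that the window must have length about $2(s-2)(s+3)$ and that Observation~\ref{obs:alter} needs both the degree and the all-faces-tight hypotheses). The gap is in the step you yourself flag as delicate: your counting claim that the $(\le\!6)$-cycles separating $S$ from $C$ can spoil only $O(s)$ (or $O(s^2)$) distance levels is not justified and is false in general. Each individual $(\le\!6)$-cycle spans only a bounded range of distances, but nothing in the hypotheses bounds the \emph{number} of such cycles: the slice may contain a nested family of short separating cycles occupying essentially every distance level (think of a long quadrangulated tube of circumference at most six between $S$ and $C$, quadrangulated so that its $4$-faces are attached to the rings rather than $S$-tight). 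In that situation no clean window of length $2(s-2)(s+3)$ exists, and a proof that only ever produces the joint via tight isolation and Lemma~\ref{lem:cylinder} cannot succeed. Your parenthetical remark that the region between two such nested cycles is ``free of further such cycles'' is also unjustified as stated; there can be more separating cycles of other lengths between them, which is why controlling this requires care.

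This is precisely what the paper's proof adds and your proposal lacks: a second mechanism for producing the joint when short separating cycles are plentiful. The paper runs a pigeonhole/interval-halving argument over the four possible lengths $r=3,4,5,6$ (this is where the factor of roughly $16$ in the constant comes from), maintaining an interval of distances containing no separating cycle of length less than $r$; either this yields a long interval free of all $(\le\!6)$-cycles (your clean window, handled as you propose), or it finds two \emph{nested separating cycles of the same length $r$, far apart, with no shorter separating cycle between them}. In the latter case the joint is not obtained from Lemma~\ref{lem:cylinder} via tight isolation at all: if the region between the two cycles lies in $L^\circ$, that quadrangulated region is itself the clean joint (its boundary faces are the two short cycles), and if it leaks out of the slice, one instead finds an equidistant cycle just outside the outer of the two cycles, at distance at least two from every short separating cycle, and only then falls back on tight isolation and Lemma~\ref{lem:cylinder}. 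Without some such case analysis---i.e., without allowing the joint's boundary to be a pair of short separating cycles rather than an equidistant cycle produced inside a tight region---the reduction you describe cannot be completed, no matter how the constant $34(s-2)(s+3)+474$ is budgeted.
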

\begin{proof}
Let $\KK$ be the set of all $(\le\!6)$-cycles $K\subset G[L^\circ]$ that separate $S$ from $C$ in $G$.
For an integer $t$ such that $d_2\le t\le d_3$, let $G_t$ denote the subgraph of $G[L]$ induced by vertices at distance exactly $t$ from $S$.
By assumptions, every cycle in $G_t$ has length at most $s$.
\claim{cl-eqcyc}{If $d_2+4\le t\le d_3-4$ and $v\in V(G_t)$ is at distance at least two from every element of $\KK$,
then all faces incident with $v$ are $S$-tight and $\deg_{G_t}(v)\ge 2$.}
\begin{subproof}
Since $v\in L$, any face $f$ of $G$ incident with $v$ is a $4$-face not bounded by $C$.
Since $d_2+4\le t\le d_3-4$, if $f$ were attached to a $(\le\!6)$-cycle $K$ separating $S$ from $C$,
then we would have $K\subset G[L^\circ]$,
and thus $K$ would be an element of $\KK$ at distance at most one from $v$, contradicting the assumptions.
Since the scene is $(d_2,d_3)$-tight, we conclude every face incident with $v$ is $S$-tight.
Since $\deg_G(v)\ge 3$, Observation~\ref{obs:alter} implies $\deg_{G_t}(v)\ge 2$.
\end{subproof}

For a cycle $K\in\KK$, let $\Delta_K$ be the closed disk bounded by $K$.  For distinct $K_1, K_2\in \KK$, we write
$K_1\prec K_2$ if $K_1$ is drawn in $\Delta_{K_2}$, and we write $G_{K_1,K_2}$ for the subgraph of $G$ drawn
in $\Delta_{K_2}\setminus \Delta^\circ_{K_1}$.

\claim{cl-nofar}{Consider cycles $K_1,K_2\in \KK$ of the same length $r$ such that $K_1\prec K_2$ and no cycle $K\in \KK$
of length less than $r$ satisfies $K_1\prec K\prec K_2$.  For $i\in\{1,2\}$, let $k_i$ denote the distance between $S$ and $K_i$.
If $k_1+4r+3\le k_2\le d_3-2(s-2)(s+3)-12$, then $G$ contains a clean joint $H$ such that $V(H)\subseteq L^\circ$.}
\begin{subproof}
Note that by the assumptions of the claim, no cycle in $G_{K_1,K_2}$ that separates $K_1$ from $K_2$ has length less than $r$
and the distance between $K_1$ and $K_2$ is at least $4r$.
If $V(G_{K_1,K_2})\subseteq L^\circ$, then since $S$ is $(d_2,d_3,s)$-isolated by $L$, all faces of
$G_{K_1,K_2}$ not bounded by $K_1$ or $K_2$ have length four, and thus we can set $H=G_{K_1,K_2}$.

Therefore, assume that $G_{K_1,K_2}$ contains a vertex not in $L^\circ$; since $L$ is a slice and $G$ is connected, we conclude
$G_{K_1,K_2}\cap G[L]$ contains vertices at any distance between $k_1$ and $d_3$ from $S$.
Let $t=k_2+8$ and let $Q$ be a connected component of $G_t$ contained
in $G_{K_1,K_2}$.  Observe that every cycle $K\in \KK$ which intersects $G_{K_1,K_2}$ is at distance
at most $k_2$ from $S$ if $K\prec K_2$, and at most $k_2+3$ if $K$ intersects $K_2$, and thus its distance from $Q$ is at least two.
By (\ref{cl-eqcyc}), $Q$ has minimum degree at least two, and thus $Q$ contains a cycle $R$, necessarily of length at most $s$.
Furthermore, (\ref{cl-eqcyc}) implies every face $f$ incident with a vertex $v\in V_G(S,R)$ is $S$-tight.
By Lemma~\ref{lem:cylinder}, $G$ contains a clean joint $H$ with $V(H)\subseteq V_G(S,R)\subseteq L^\circ$, as required.
\end{subproof}

Let $b_2=d_2-1$ and $e_2=d_3-2(s-2)(s+3)-11$.  For $3\le r\le 6$, let $b_r$ and $e_r$ be chosen so that $b_{r-1}\le b_r\le e_r\le e_{r-1}$,
every cycle in $\KK$ of length $r$ is at distance either at most $b_r$ or at least $e_r$ from $S$, and subject to these
conditions, $e_r-b_r$ is as large as possible.

Consider a fixed $r\in \{3,4,5,6\}$.  If no cycle in $\KK$ has length $r$ and is at distance more than $b_{r-1}$ and less than $e_{r-1}$ from $S$, then
we have $b_r=b_{r-1}$ and $e_r=e_{r-1}$.  Otherwise, let $K_1\in \KK$ be a cycle of length $r$ whose distance $k_1$ from $S$ satisfies $b_{r-1}<k_1<e_{r-1}$
and subject to that, $k_1$ is as small as possible; and, let $K_2\in \KK$ be a cycle of length $r$ whose distance $k_2$ from $S$ satisfies $b_{r-1}<k_2<e_{r-1}$
and subject to that, $k_2$ is as large as possible.  If $k_2\ge k_1+4r+3$, then (\ref{cl-nofar}) implies that the conclusion of this lemma holds, and thus
we can assume that $k_2\le k_1+4r+2$.  Note that the distance of every cycle in $\KK$ of length $r$ from $S$ is at most $b_{r-1}$, or between $k_1$ and $k_2$ (inclusive), or at least $e_{r-1}$.
Furthermore, $(k_1-b_{r-1})+(e_{r-1}-k_2)=(e_{r-1}-b_{r-1})-(k_2-k_1)\ge (e_{r-1}-b_{r-1})-4r-2$, and
thus, considering $(b_{r-1},k_1)$ and $(k_2,e_{r-1})$ as possible choices for $(b_r,e_r)$, we have
$e_r-b_r\ge \max(k_1-b_{r-1},e_{r-1}-k_2)\ge \frac{e_{r-1}-b_{r-1}}{2} - 2r -1$.

It follows that $e_6-b_6>\frac{e_2-b_2}{16}-22=\frac{d_3-d_2-2(s-2)(s+3)-362}{16}=2(s-2)(s+3)+7$.  Let $t=b_6+5$ and let $Q$ be
a connected component of $G_t$ (note that $G_t$ is non-empty, since $L$ is a slice).
Observe the distance between $Q$ and every element of $\KK$ is at least two, and thus by (\ref{cl-eqcyc}), $Q$ has minimum degree at least two.
Consequently, $Q$ contains a cycle $R$, necessarily of length at most $s$.
Since $t+2(s-2)(s+3)\le e_6-2$, every vertex $v\in V_G(S,R)$ is at distance at least $b_6+5$ and at most $e_6-2$ from $S$,
and thus every cycle $K\in \KK$ is at distance at least two from $v$.  Consequently, (\ref{cl-eqcyc}) implies
all faces incident with $v$ are $S$-tight.  By Lemma~\ref{lem:cylinder}, $G$ contains a clean joint $H$ such that
$V(H)\subseteq V_G(S,R)\subseteq L^\circ$, as required.
\end{proof}

Let $G$ be a plane graph.  For a set $S\subseteq V(G)$, a path $P$ from a vertex $v$ to $S$ is \emph{$S$-geodesic} if $P$ is a shortest path from $v$ to $S$.
Let $B$ be an odd cycle in $G$, let $\Lambda$ be one of the two connected open subsets of the plane bounded by $B$, let $uv$ be an edge of $B$,
let $w$ be the vertex of $B$ that is farthest (as measured in $B$) from $uv$ and let $z$ be a vertex of $G$ such that either $z=w$, or $z$ does not
belong to the closure of $\Lambda$.  Let $P_u$ and $P_v$ be the paths in $B-uv$ joining $u$ and $v$, respectively, with $w$.
We say that $\Lambda$ is a \emph{$z$-petal with top $uv$} if there exists a path $Q$ in $G$ between $w$ and $z$ such that the paths $Q\cup P_u$ and $Q\cup P_v$ are $\{z\}$-geodesic.

Let $S$ be a set of vertices inducing a connected subgraph of $G$ and consider a cycle $K$ which is equidistant at some distance $t\ge 1$ from $S$.
The removal of $K$ splits the plane into two open sets, let $\Delta$ be the one containing $S$.
For each $v\in V(K)$, choose an $S$-geodesic path $P_v$.
We can choose the paths so that for every $u,v\in V(K)$, the paths $P_u$ and $P_v$
are either disjoint or intersect in a path ending in $S$.  Removing $G[S]$ and the paths
$P_v$ for $v\in V(K)$ splits $\Delta$ to several parts; for each $e\in E(K)$, let $\Delta_e$
be the one whose boundary contains $e$.  Clearly, $\Delta_e$ and $\Delta_{e'}$ are disjoint
for distinct $e,e'\in E(K)$.  We call the collection $\{\Delta_e:e\in E(K)\}$ a \emph{flower of $K$ with respect to $S$}.
Let us remark that not all elements of a flower are necessarily petals: $\Delta_e$ is a $z$-petal with top $e$ for some
$z\in S$ if and only of the boundary of $\Delta_e$ does not contain any edge of $G[S]$.

Since a petal is bounded by an odd cycle, it contains an odd face of $G$.  However, this face could in general be arbitrarily
far from $S$.  In the next lemma, we exploit the presence of $S$-tight faces to find a face of length other than four close to $S$.

\begin{lemma}\label{lemma-nearodd}
Let $d_4$ be a positive integer and let $(G,\{S\},\varnothing)$ be a scene
such that every vertex $v$ at distance exactly $d_4$ from $S$ has degree at least three and all $4$-faces incident with $v$ are $S$-tight.
For some $d\le d_4$, let $uv$ be an edge of $G$ such that both $u$ and $v$ are at distance exactly $d$ from $S$, and suppose $z\in S$ is at distance
exactly $d$ from both $u$ and $v$.  For every $z$-petal $\Delta$ with top $uv$,
there exists a face $f\subseteq \Delta$ of $G$ of length other than four at distance at most $d_4$ from~$S$.
\end{lemma}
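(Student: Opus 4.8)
The plan is to argue by contradiction: suppose that every face of $G$ contained in the $z$-petal $\Delta$ and at distance at most $d_4$ from $S$ has length exactly four. Let $Q$ be the geodesic path from the apex $w$ of $B=\partial\Delta$ to $z$ witnessing that $\Delta$ is a $z$-petal, so that $Q\cup P_u$ and $Q\cup P_v$ are $\{z\}$-geodesic, where $P_u,P_v$ are the two arcs of $B-uv$ from $u,v$ to $w$. First I would record the basic distance structure: along $P_u$ (and $P_v$) the distance from $z$ increases by exactly one per edge starting from $d$ at $u$ (resp.\ $v$), so $w$ is at distance $d+|P_u|=d+|P_v|$ from $z$, and since $B$ is an odd cycle $|P_u|=|P_v|$. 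Every vertex on the boundary of $\Delta$ thus has distance from $S$ (equivalently from $z$, since these geodesics realize the distance to $S$) at most $d+|P_u|\le$ something we must bound by $d_4$; the first technical point is to check, from $d\le d_4$ and the geodesic condition, that all boundary vertices of $\Delta$ — and in fact all vertices of $\Delta$ relevant below — lie at distance at most $d_4$ from $S$, so that the hypothesis "$4$-faces incident with such vertices are $S$-tight" applies to them.

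The heart of the argument is a parity/counting obstruction on the quadrangulated disk $\Delta$. Under the contradiction hypothesis, every internal face of $\Delta$ is a $4$-face, and by the hypothesis of the lemma every $4$-face incident with a vertex at distance $d_4$ is $S$-tight; I would first want to upgrade this to: every internal face of $\Delta$ is $S$-tight. This should follow because the faces nearest $S$ are forced to be tight and tightness propagates outward — a face $f$ adjacent to an $S$-tight face shares two vertices at distances $t,t+1$, and if $f$ is a $4$-face not bounded by $C$ then its remaining two vertices are forced by Observation~\ref{obs:alter} applied at the shared vertices to also come in a consecutive pair, so $f$ is $S$-tight as well. Once all internal faces of $\Delta$ are $S$-tight, I would use Observation~\ref{obs:alter}: each interior vertex $x$ of $\Delta$ has even degree, and its neighbors alternate between the "same distance" class and the "$\pm 1$" classes. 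Color each vertex of $\Delta$ by its distance-parity from $S$; tightness says every face is $S$-tight, so on the boundary of each $4$-face the distances read $t,t,t+1,t+1$ in order. Walking around the boundary cycle $B$, the distance-from-$S$ sequence is governed by these local patterns, and the key contradiction is that $B$ is an \emph{odd} cycle: I would show that the "distance to $S$" labelling, restricted to any cycle all of whose bounded faces are $S$-tight $4$-faces, must return to its starting value after an \emph{even} number of steps, because each $S$-tight $4$-face contributes an even change to a suitable potential (e.g.\ the number of "ascending" boundary edges minus "descending" ones around the disk equals $0 \pmod 2$ by a discrete Green's-theorem/face-summation over the quadrangulation), contradicting $|B|$ odd.

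The main obstacle I anticipate is making the propagation step and the parity step fully rigorous near the boundary: the cycle $B$ itself is the boundary of $\Delta$, its edges are not necessarily incident with $S$-tight faces on the outside, and the two special arcs $P_u,P_v$ together with $Q$ behave differently from generic boundary. Concretely, the edge $uv$ (the "top") has both ends at distance $d$ from $S$, which is the defect that, combined with oddness, produces the contradiction; I would isolate the disk bounded by $B$, note that by the petal/geodesic structure the vertices of $B$ all have distance to $S$ determined by their position (strictly increasing along $P_u,P_v$ toward $w$), and then run the face-count. Quantitatively one should double-check that $d+|P_u|\le d_4$: since the lemma only assumes $d\le d_4$ and puts the tightness hypothesis at the single distance level $d_4$, I suspect the real intended reading is that $S$-tightness of $4$-faces incident with distance-$d_4$ vertices forces, by the outward propagation above, that $4$-faces are $S$-tight at \emph{all} levels between $d$ and $d_4$, and that the petal, being geodesic, cannot poke past level $d_4$ before hitting a non-$4$-face — which is exactly what the contradiction delivers. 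So the skeleton is: (i) set up geodesic/parity structure of $B$; (ii) assuming all faces of $\Delta$ within distance $d_4$ are $4$-faces, propagate $S$-tightness inward-to-outward from the levels forced by the hypothesis; (iii) apply Observation~\ref{obs:alter} to get the alternating neighbor structure at every interior vertex; (iv) sum a parity potential over the $S$-tight $4$-faces of $\Delta$ to contradict $|B|$ odd; conclude that some face of $\Delta$ within distance $d_4$ of $S$ has length $\ne 4$.
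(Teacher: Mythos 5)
Your proposal has a fundamental gap, and it is exactly the difficulty the paper flags right before the lemma: the odd face that must exist inside the petal ``could in general be arbitrarily far from $S$.'' Your plan assumes, for contradiction, only that faces of $\Delta$ \emph{at distance at most $d_4$} from $S$ are $4$-faces, but then your steps (ii)--(iv) treat $\Delta$ as an entirely quadrangulated (indeed entirely $S$-tight) disk and derive a parity contradiction with $|B|$ odd. Nothing in the hypotheses or in the contradiction hypothesis controls the faces of $\Delta$ at distance greater than $d_4$ from $S$: the interior of a petal can reach far beyond level $d_4$, the odd face can sit out there, and then no face-summation over $\Delta$ is available. (Note also that if all faces of $\Delta$ really were quadrilaterals, oddness of $B$ is contradicted by plain bipartiteness-style parity, no tightness needed; the entire content of the lemma is locating a non-quadrilateral face within distance $d_4$, which your argument never addresses.) Your closing remark that ``the petal, being geodesic, cannot poke past level $d_4$ before hitting a non-$4$-face'' is precisely the statement that needs proof, and your outline supplies no mechanism for it. The paper's mechanism is different: take $\Delta$ minimal among $z$-petals with the stated properties; if the odd face is far, look at the subgraph $Q$ of vertices at distance exactly $d_4$ inside the closure of $\Delta$, use the degree and tightness hypotheses (via Observation~\ref{obs:alter}, and implicitly the fact that a non-$4$-face at a $Q$-vertex would already finish the proof) to find an equidistant cycle $K\subseteq Q$, and then build a flower of $K$ with respect to $S$ whose geodesics end at $z$; this produces $z$-petals properly contained in $\Delta$, contradicting minimality. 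That descent idea is absent from your proposal and cannot be replaced by the parity count.

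Two further points would sink the details even where the parity picture applies. First, you have the geometry of the petal backwards: since $Q\cup P_u$ and $Q\cup P_v$ are $\{z\}$-geodesic, the distance to $z$ \emph{decreases} by one per edge from $u$ (respectively $v$) toward $w$, so $w$ is the vertex of $B$ closest to $z$ (at distance $d-|P_u|$), not at distance $d+|P_u|$; in particular your worry about bounding $d+|P_u|$ by $d_4$ is moot, while the genuine problem (interior vertices beyond level $d_4$) remains. Second, your ``propagation of $S$-tightness'' step is unsound: Observation~\ref{obs:alter} requires \emph{all} faces at a vertex to be $S$-tight before it says anything, so it cannot be used to propagate tightness from one face to a neighboring face, and the lemma's hypothesis gives tightness only for $4$-faces incident with vertices at distance exactly $d_4$, not at lower levels; a $4$-face with distance pattern $t,t+1,t+2,t+1$ is a standard example showing tightness does not spread face-to-face.
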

\begin{proof}
We can assume that $\Delta$ is minimal, i.e., there is no $\Delta'\subsetneq \Delta$ such that $\Delta'$
is a $z$-petal satisfying the assumptions of the lemma.  Since $\Delta$ is bounded by an odd cycle, there exists
an odd face $f$ contained in $\Delta$.  It suffices to consider the case that the distance between $f$ and $S$ is
at least $d_4+1$.  Let $Q$ be the subgraph of $G$ induced by vertices at distance exactly $d_4$ from $S$ that are contained
in the closure of $\Delta$.  Note that $Q$ is non-empty since $G$ is connected, and can intersect the boundary of $\Delta$ only in the edge $uv$.

If $Q=uv$, then $\{u,v\}$ forms a cut in $G$ that separates the rest of the boundary of $\Delta$ from the vertices incident with $f$.
Observe that this implies that there exists a face $f'$ contained in $\Delta$ in whose boundary $u$ and $v$ appear non-consecutively.
This implies $f'$ is not $S$-tight, and thus it is not a $4$-face.  Hence, the conclusion of this lemma is satisfied.

Therefore, we can assume that $Q\neq uv$.  By Observation~\ref{obs:alter}, all vertices of $Q$ other than $u$ and $v$
have degree at least two in $Q$.  Since $uv\in E(Q)$, it follows that $Q$ contains a cycle $K$, which is equidistant at distance $d_4$ from $S$.  
Let $F=\{\Delta_e:e\in E(K)\}$ be a flower of $K$ with respect to $S$ and let $e_0$ be the unique edge of $K$ such that the closure of $\Delta_{e_0}$
contains the edge $uv$.  Note that since every vertex in the boundary of $\Delta$ is contained in an $S$-geodesic path ending in $z$,
every vertex of $K$ is at distance exactly $d_4$ from $z$, and thus we can choose $F$ so that $\Delta_e\subset\Delta$ and $\Delta_e$
is a $z$-petal for every $e\in E(K)\setminus\{e_0\}$.  Since $|F|=|K|\ge 3$, it follows that each such $z$-petal $\Delta_e$ is a proper subset
of~$\Delta$.  This contradicts the minimality of~$\Delta$.
\end{proof}

Next, we apply Theorem~\ref{mainlemma} to prove existence of sufficiently isolated anomalies in hypothetical
minimal counterexamples to Theorem~\ref{thm-nosep4sp}.  To this end, we need a few more definitions.  For $p\ge 1$,
we say that a scene $(G,\SS,C)$ is \emph{$p$-small} if every set in $\SS$ has size at most $p$.  The scene is \emph{internally triangle-free}
if for every triangle $T\neq C$ in $G$, there exists $S\in\SS$ such that $T\subseteq G[S]$.
For $S\in \SS$, a cycle $F\neq C$ in $G$ is \emph{$S$-private} if the open disk bounded by $F$ contains a vertex of $S$, but not of any other set from $\SS$.
For an integer $d\ge 1$, we say the scene has \emph{no $d$-distant private $4$-cycles}
if for every $S\in\SS$, every $S$-private $4$-cycle in $G$ is at distance less than $d$ from $S$.
We say that a $4$-cycle is \emph{$\SS$-private} if it is $S$-private for some $S\in\SS$.

Consider a face $f$ of $G$, bounded by a closed walk $v_1v_2\ldots v_m$ going clockwise around $f$.
A pair $(v_{i-1}v_iv_{i+1},f)$ for $1\le i\le m$ (where $v_0=v_m$ and $v_{m+1}=v_1$) is called an \emph{angle}
in $G$, and $v_i$ is its \emph{tip}.

\begin{lemma}\label{lemma-isolate}
For all integers $D_1\ge 2$ and $p\ge 1$ and for every function $h:\mathbb{N}\to\mathbb{N}$, there exist integers $s\ge 1$ and $D_2>D_1$
with the following property.
Let $(G,\SS,C)$ be a $(D_1,D_2)$-tight $2D_2$-distant $p$-small internally triangle-free scene with no $D_1$-distant private $4$-cycles.
If $|\SS|=1$, assume furthermore that $C$ is not the null graph and the distance between $C$ and the unique element of $\SS$ is at least $D_2-1$.

Let $Z=C\cup \bigcup_{S\in\SS} G[S]$.  If $G$ is $Z$-critical, then there exists an integer $d\ge D_1$ such that
$d+h(s)\le D_2$ and some element of $\SS$ is $(d,d+h(s),s)$-isolated.
\end{lemma}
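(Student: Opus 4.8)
The plan is to use Theorem~\ref{mainlemma} to show that $G$ has very few faces of length at least five, then to find an anomaly $S^*$ near which essentially every face is a quadrangle, carve out around $S^*$ an annular slice $L$ of the prescribed thickness $h(s)$, and finally to bound the lengths of equidistant cycles inside $G[L]$ by a flower/petal argument. I would fix $\eta$ as in Theorem~\ref{mainlemma}, declare $s=s(D_1,p)$ to be a constant (of order $p^2$ plus a bounded function of $p$ and $\eta$) to be pinned down in the last step, and then take $D_2$ enormous compared with $D_1,p,s,h(s)$. Starting from $Z=C\cup\bigcup_{S\in\SS}G[S]$, I would enlarge it to $Z'$ containing also all separating $4$-cycles; $G$ stays $Z'$-critical (a short argument straight from the definition of criticality), and every triangle and separating $4$-cycle of $G$ is now in $Z'$. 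Using Lemma~\ref{lem:aksionov} and Proposition~\ref{prop-sg} one sees that every separating $4$-cycle encloses a vertex of some member of $\SS$; combining this with internal triangle-freeness, the absence of $D_1$-distant private $4$-cycles, and the observation that a deeply nested family of separating cycles around a single anomaly would already produce (via Lemma~\ref{lem:cylinder}) the clean cylindrical structure we are after, one reduces to the case $|V(Z')|\le c_1(p)\,|\SS|$. Then Theorem~\ref{mainlemma} bounds the total length of the $(\ge\!5)$-faces of $G$ by $\eta c_1(p)|\SS|$.

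Next I would localize. For $S\in\SS$ let $b(S)$ be the total length of the $(\ge\!5)$-faces within distance $D_2/2$ of $S$. Since the members of $\SS$ are $2D_2$-distant, no $(\ge\!5)$-face lies within $D_2/2$ of two of them, so $\sum_{S}b(S)\le\eta c_1(p)|\SS|$; also at most one member of $\SS$ lies within distance $D_2/2$ of $C$ (two such would be within $D_2+2<2D_2$ of each other), and if $|\SS|=1$ the extra hypothesis already places $C$ at distance $>D_2/2$ from the unique member. Averaging over the at least $\max(1,|\SS|-1)$ members far from $C$ yields $S^*\in\SS$ with $\operatorname{dist}(S^*,C)>D_2/2$ and $b(S^*)\le B(p)$. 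Call a layer $i\in\{D_1+2,\dots,D_2/2\}$ \emph{bad} if a $(\ge\!5)$-face, or a $(\le\!6)$-cycle separating $S^*$ from $C$, touches a vertex at distance $i$ from $S^*$ (within a bounded slack in the second case). The $(\ge\!5)$-faces spoil at most $B(p)$ layers; the relevant short separating cycles spoil only $O_p(1)$ layers, since otherwise nested such cycles near $S^*$ form a clean cylinder of circumference at most $6$ and Lemma~\ref{lem:cylinder} already finishes the proof. As the distance from $S^*$ attains every value in $[0,D_2/2]$ (follow a geodesic toward another member of $\SS$, or toward $C$ when $|\SS|=1$), the good layers contain a run of more than $h(s)$ consecutive values; let $d$ be its bottom, $d_3:=d+h(s)$, and $L:=\{v:d\le\operatorname{dist}(S^*,v)\le d_3\}$. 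Then $d\ge D_1$, $d_3\le D_2$, $L$ is an $(S^*,d,d_3)$-slice, $L\cap V(C)=\emptyset$, every vertex of $L$ lies outside $Z$ and hence has degree at least three by $Z$-criticality, and every face incident with a vertex of $L$ is a quadrangle (it is not a $(\ge\!5)$-face since the layers are good, not bounded by $C$ since $L$ misses $V(C)$, and not a triangle since every triangle other than $C$ lies in some $G[S]$ at distance $0$ from $S$). Finally, since the layers of $L$ avoid the neighbourhoods of short separating cycles and lie in $[D_1,D_2]$, the $(D_1,D_2)$-tightness of the scene forces every such quadrangle to be $S^*$-tight.

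The heart of the argument is bounding the length $r$ of an equidistant cycle $K\subseteq G[L]$ at distance $t\in[d,d_3]$ from $S^*$. I would build a flower $\{\Delta_e:e\in E(K)\}$ of $K$ with respect to $S^*$ from pairwise non-crossing $S^*$-geodesics of the vertices of $K$. Since $G[S^*]$ is connected on at most $p$ vertices, it has $O(p^2)$ edges, and each of them lies on the boundary of at most two of the pairwise disjoint regions $\Delta_e$, so all but $O(p^2)$ of the $\Delta_e$ are $z$-petals with top $e$ for some $z\in S^*$ at distance $t$ from both ends of $e$. Viewing $(G,\{S^*\},\varnothing)$ as a scene, the layers of $L$ witness the hypothesis of Lemma~\ref{lemma-nearodd} with $d_4:=t$ (distance-$t$ vertices have degree at least three and all incident quadrangles $S^*$-tight), so each such petal contains a face $f_e$ of length other than four at distance at most $t\le D_2/2$ from $S^*$, and these $f_e$ are pairwise distinct because the petals are disjoint. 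Each $f_e$ lies in the closed disc bounded by $K$, every vertex of which is at distance at most $t\le D_2/2$ from $S^*$, hence at distance more than $D_2/2$ from every other member of $\SS$ and from $C$; so $f_e$ is either one of the $O(p)$ triangular faces of $G[S^*]$, or a $(\ge\!5)$-face contributing to $b(S^*)\le B(p)$. Therefore $r-O(p^2)\le O(p)+B(p)/5$, giving $r\le s$ for the constant $s=s(D_1,p)$ chosen at the outset; thus $S^*$ is $(d,d+h(s),s)$-isolated, as required.

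The step I expect to be the main obstacle is the control of separating cycles: both the reduction to $|V(Z')|=O_p(|\SS|)$ needed to make Theorem~\ref{mainlemma} effective, and --- more delicately --- ruling out that short ($\le\!6$) separating cycles accumulate near $S^*$ and prevent any candidate slice from having all its quadrangles $S^*$-tight. The dichotomy used there, namely that either there are only $O_p(1)$ such cycles or else a nested family of them yields a clean short cylinder to which Lemma~\ref{lem:cylinder} applies, is where the argument has to be carried out most carefully.
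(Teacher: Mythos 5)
Your overall skeleton (apply Theorem~\ref{mainlemma}, average to locate one anomaly $S^*$ with a quiet annulus around it, then bound equidistant cycles in the slice by a flower/petal argument through Lemma~\ref{lemma-nearodd}) is the same as the paper's, but the step you yourself flag as the main obstacle is a genuine gap, and the fix you sketch does not work. Putting \emph{all} separating $4$-cycles into $Z'$ and claiming $|V(Z')|\le c_1(p)\,|\SS|$ is not justified: the hypothesis of no $D_1$-distant private $4$-cycles only forces each $S$-private $4$-cycle to lie at distance less than $D_1$ from $S$, and nothing bounds how many such cycles there are --- e.g.\ two nonadjacent vertices $u,v$ with many common neighbours $x_1,\dots,x_k$, with $v$ and a further vertex $w$ of $S$ drawn inside the innermost cell, give unboundedly many $S$-private separating $4$-cycles $ux_ivx_j$ at distance $0$ from $S$, so $|V(Z')|$ is not controlled by $p$ and $D_1$. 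Then Theorem~\ref{mainlemma} applied with $Z'$ yields no usable bound and your quantities $b(S^*)$, $B(p)$ and the bad-layer count collapse. Your fallback --- that a deeply nested family of short separating cycles ``yields a clean cylinder via Lemma~\ref{lem:cylinder} and finishes the proof'' --- is doubly problematic: Lemma~\ref{lem:cylinder} requires $S$ to be \emph{tightly isolated}, i.e.\ a whole region in which every face is $S$-tight, which an accumulation of short separating cycles does not provide (in the example above they all sit at distance $0$ from $S$); and even if it did produce a clean joint, that is not the conclusion of the statement being proved --- Lemma~\ref{lemma-isolate} asks for a $(d,d+h(s),s)$-isolated anomaly, and the implication ``isolated $\Rightarrow$ clean joint'' is Lemma~\ref{lem:cylsep}, not its converse, so ``the clean cylindrical structure we are after'' is a non sequitur here.

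The paper's way around this is to cut rather than to enlarge $Z$: for each $S$ it keeps only the inclusion-\emph{maximal} $S$-private $4$-cycles (their open disks are pairwise disjoint and each contains a vertex of $S$, so there are at most $|S|\le p$ of them), plus at most one \emph{minimal} non-$\SS$-private separating $4$-cycle $C_0$, and passes to the subgraph $G_1$ drawn in the region obtained by removing the interiors of the former and the exterior of the latter. By construction $G_1$ has no separating $4$-cycles, and by Proposition~\ref{prop-sg} it is $Z_1$-critical with $|V(Z_1)|\le 3p|\SS_1|+5$; only then is Theorem~\ref{mainlemma} invoked. A second, smaller discrepancy: you aim for more than the conclusion requires --- $(d,d+h(s),s)$-isolation only asks that every face meeting $L$ have length four, not that it be $S^*$-tight --- and your attempt to secure tightness inside the slice by excluding layers near $(\le\!6)$-cycles separating $S^*$ from $C$ is essentially the content of Lemma~\ref{lem:cylsep}, which the paper deliberately defers to the next step; your $O_p(1)$ bound on the layers spoiled by such cycles rests on the same unjustified dichotomy. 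Your final petal count is close in spirit to the paper's (which counts ``contaminated angles'' against the boundary of the face $f_S$ of $B_S$ containing $K$, giving $|K|\le\mu+6p$), but it inherits the quantitative gap above.
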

\begin{proof}
Let $\mu=2\eta(3p+5)$, where $\eta$ is the constant from Theorem~\ref{mainlemma}, $s=\mu+6p$, and
$D_2=D_1+3+(\mu+1)(h(s)+1)$.

By removing some of the edges of $E(Z)\setminus E(C)$ from $G$ if necessary, we can assume $G$ contains no triangle other than $C$.
Since $G$ is $Z$-critical, note that Lemma~\ref{lem:aksionov} implies $\SS\neq\emptyset$
and the open disk bounded by any separating $4$-cycle in $G$ contains a vertex of $\bigcup\SS$.
If $G$ contains a non-$\SS$-private separating $4$-cycle, then let $C_0$ be such a $4$-cycle with the closed disk $\Delta_0$ bounded by $C_0$ minimal.
Otherwise, let $\Delta_0$ be the whole plane and $C_0=C$.

For each $S\in\SS$, let $\FF_S$ denote the set of $S$-private $4$-cycles $F$ in $G$
such that the open disk $\Lambda_F$ bounded by $F$ is contained in $\Delta_0$ and is inclusionwise-maximal among all $4$-cycles with this property.
We claim that for distinct $F,F'\in \FF_S$, the disks $\Lambda_F$ and $\Lambda_{F'}$ are disjoint.  Indeed, since $G$ contains at most one triangle,
the cycles $F$ and $F'$ are induced, and thus if $\Lambda_F\cap\Lambda_{F'}\neq\emptyset$, then the open disk $\Lambda_F\cup \Lambda_{F'}$ is also bounded by
an $S$-private $4$-cycle, contradicting the maximality of $\Lambda_F$ or $\Lambda_{F'}$.
Since each of the disks contains a vertex of $S$, we conclude $|\FF_S|\le |S|$.

Furthermore, for distinct $S,S'\in\SS$ and any $F\in\FF_S$ and $F'\in\FF_{S'}$, the disks $\Lambda_F$ and $\Lambda_{F'}$ are
disjoint. Indeed, since the scene has no $D_1$-distant private $4$-cycles, the distance between $S$ and $F$, and between $S'$ and $F'$,
is less than $D_1$, and since the scene is $2D_2$-distant, the cycles $F$ and $F'$ are vertex-disjoint.
Futhermore $\Lambda_F\not\subseteq \Lambda_{F'}$ since $\Lambda_F$ contains a vertex of $S$ and $F'$ is $S'$-private,
and symmetrically $\Lambda_{F'}\not\subseteq \Lambda_F$.  This implies $\Lambda_F\cap\Lambda_{F'}=\emptyset$.

Let $\SS_1\subseteq \SS$ consist of the sets $S\in\SS$ intersecting $\Delta_0$; note that $\SS_1\neq\emptyset$.
For $S\in\SS_1$, let $\Delta_S$ be the complement of $\bigcup_{F\in \FF_S} \Lambda_S$
and let $B_S$ be the subgraph of $G[S]\cup\bigcup\FF_S$ drawn in $\Delta_S\cap \Delta_0$.
Let $G_1$ be the subgraph of $G$ drawn in the subset $\Delta_1=\Delta_0\cap\bigcap_{S\in\SS_1} \Delta_S$ of the plane.
Let $Z_1=C_0\cup \bigcup_{S\in\SS_1} B_S$; Proposition~\ref{prop-sg} implies that $G_1$ is $Z_1$-critical.

If some cycle $F\in \FF_S$ is vertex-disjoint from $S$, then since $G[S]$ is connected, we conclude $\FF_S=\{F\}$ and
$B_S=F$.  Otherwise, every cycle $F\in\FF_S$ intersects $S$, and thus $G[S]\cup\bigcup\FF_S$ is connected, and either $B_S$ is connected
or every component of $B_S$ intersects $C_0$; and furthermore, $|V(B_S)|\le 3|S|\le 3p$.
Since the scene has no $D_1$-distant private $4$-cycles, every vertex of $B_S$ is at distance at most $D_1+1$ from $S$.

Since the scene is $2D_2$-distant, at most one set in $\SS_1$ is at distance at most $D_2-2$ from $C_0$.
Moreover, if $|\SS_1|=1$, then we could not have chosen $C_0$ as a non-$\SS$-private separating $4$-cycle,
and thus $C_0=C$ is at distance at least $D_2-1$ from the unique element of $\SS_1=\SS$ by the assumptions of this lemma.
Therefore, letting $\SS'_1$ consist of the sets $S\in \SS_1$ at distance at least $D_2-1$ from $C_0$,
we have $|\SS'_1|\ge |\SS_1|/2$.

A face $f$ of $G$ is \emph{poisonous} if $f\subseteq \Delta_1$ and $f$ has length at least $5$.
The construction of $G_1$ ensures that it has no separating $4$-cycles, and thus
Theorem~\ref{mainlemma} implies
\begin{equation}\label{eq-sumf}
\sum|f|\le \eta|V(Z_1)|\le \eta(3p|\SS_1|+5)\le \mu|\SS'_1|,
\end{equation}
where the summation is over all poisonous faces of $G$.
Consider $S\in \SS'_1$. We say that an angle $(xyz,f)$ in $G$ is {\em $S$-contaminated}
if $f$ is poisonous and the distance between $S$ and $y$ in $G$ is at most $D_2-1$.
Since every $S$-contaminated angle contributes at least one toward the sum in (\ref{eq-sumf}),
we deduce that there exists $S\in\SS'_1$ such that there are at most $\mu$ angles that are $S$-contaminated.
Let us fix such a set $S$.  

By the choice of $D_2$, there exists an integer $d\ge D_1+2$
such that $d+h(s)\le D_2-2$ and no angle with tip at distance at least $d$ and at most $d+h(s)$
is $S$-contaminated.  Let $L$ consist of the vertices of $G$ drawn in $\Delta_1$
at distance at least $d$ and at most $d+h(s)$ from $S$.  Observe that $L$ is an $(S,d,d+h(s))$-slice and every vertex of $L$ is
contained in the interior of $\Delta_1$,
since $C_1$ is at distance at least $D_2-1$ from $S$, every vertex of $B_S$ is at distance at most $D_1+1$
from $S$, and for $S'\in \SS_1$, the subgraph $B_{S'}$ is at distance at least $2D_2-(D_1+1)>D_2-1$ from $S$.
In particular, $L\cap V(C)=\emptyset$.  Since $G_1$ is $Z_1$-critical, every vertex of $L$ has degree at least three.
The choice of $d$ implies that every face of $G$ incident with a vertex of $L$ has length four.

Hence, it remains to argue that every cycle $K\subseteq G[L]$ equidistant from $S$ has length at most $s$.
First, observe the argument from the previous paragraph also implies that every face $f$ of $G$
contained in $\Delta_S$ and at distance less than $D_2-1$ from $S$ is contained in $\Delta_1$.
Let $f_S$ be the face of $B_S$ containing $K$, and
let $W=\{\Delta_e:e\in E(K)\}$ be a flower of $K$ in $G$ with respect to $S$.  Observe that if the closure of $\Delta_e$
does not contain any edge of the boundary of $f_S$, then $\Delta_e$ is a $z$-petal for some $z\in S$ and $\Delta_e\subset\Delta_S$.
Lemma~\ref{lemma-nearodd} applied with $d_4=D_2-2$ to the scene $(G,\{S\},\varnothing)$ implies that there exists
a face $f\subseteq \Delta_e$ of $G$ of length other than four at distance less than $D_2-1$ from $S$, and as we observed,
this implies that $f$ is contained in $\Delta$; hence, $f$ is poisonous and contributes an $S$-contaminated angle.
Consequently, all but at most $\mu$ elements of $W$ contain an edge of the boundary of $f_S$ in their closure.
Since $|V(B_S)|\le 3p$, $f_S$ has length at most $6p$, and thus $|K|=|W|\le \mu+6p=s$, as required.
\end{proof}

We can now combine the lemmas to obtain the main structural result of this section.

\begin{lemma}\label{lemma-struct}
There exists a function $f_{\ref{lemma-struct}}:\mathbb{N}^2\to\mathbb{N}$ with the following property.
Let $D_1\ge 2$ and $p\ge 1$ be integers and let $D_2=f_{\ref{lemma-struct}}(D_1,p)$.
Let $(G,\SS,C)$ be a $(D_1,D_2)$-tight $2D_2$-distant $p$-small internally triangle-free scene with no $D_1$-distant private $4$-cycles.
If $|\SS|=1$, assume furthermore that $C$ is not the null graph and the distance between $C$ and the unique element of $\SS$ is at least $D_2-1$.
Let $Z=C\cup \bigcup_{S\in\SS} G[S]$.  If $G$ is $Z$-critical, then $G$ contains a clean joint $H$ whose vertices are at distance at least
$D_1$ and at most $D_2-1$ from some element of $\SS$.  Furthermore, $H$ is vertex-disjoint from $C$.
\end{lemma}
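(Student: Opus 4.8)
The plan is to compose Lemma~\ref{lemma-isolate} with Lemma~\ref{lem:cylsep}, having chosen the function fed to Lemma~\ref{lemma-isolate} so that the isolated anomaly it produces sits in exactly the distance window that Lemma~\ref{lem:cylsep} needs. Fix once and for all the function $h:\mathbb{N}\to\mathbb{N}$ with $h(s)=34(s-2)(s+3)+474$, so that applying Lemma~\ref{lem:cylsep} with parameters $d_2:=d$ and $d_3:=d+h(s)$ automatically satisfies its requirement $d_3=d_2+34(s-2)(s+3)+474$. Given integers $D_1\ge 2$ and $p\ge 1$, set $D_1^\star:=\max(D_1,4)$, apply Lemma~\ref{lemma-isolate} to $D_1^\star$, $p$ and this $h$ to obtain $s\ge 1$ and $D_2>D_1^\star\ge D_1$, and define $f_{\ref{lemma-struct}}(D_1,p):=D_2$. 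Note that the returned $s$ is at least $3$: inspecting the proof of Lemma~\ref{lemma-isolate}, $s=2\eta(3p+5)+6p\ge 6p\ge 6$.

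Now let $(G,\SS,C)$ be a scene satisfying all the stated hypotheses with $G$ being $Z$-critical. Since enlarging $D_1$ to $D_1^\star$ only weakens ``$(D_1,D_2)$-tight'' (fewer $4$-faces are then constrained) and ``no $D_1$-distant private $4$-cycles'' (fewer cycles are then forbidden), the scene $(G,\SS,C)$ also meets the hypotheses of Lemma~\ref{lemma-isolate} for the pair $(D_1^\star,p)$. That lemma therefore supplies an integer $d\ge D_1^\star\ge 4$ with $d+h(s)\le D_2$ and an element $S\in\SS$ that is $(d,\,d+h(s),\,s)$-isolated by some slice $L$. Put $d_2:=d$ and $d_3:=d+h(s)$, so that $d_2\ge 4$, $s\ge 3$ and $d_3=d_2+34(s-2)(s+3)+474$. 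I then claim the one-anomaly scene $(G,\{S\},C)$ is $(d_2,d_3)$-tight: since $[d_2,d_3]\subseteq[D_1,D_2]$, the instance of ``$(D_1,D_2)$-tightness'' of $(G,\SS,C)$ for the set $S$ already asserts that every $4$-face of $G$ at distance between $d_2$ and $d_3$ from $S$ is bounded by $C$, is $S$-tight, or is attached to a $(\le\!6)$-cycle separating $S$ from $C$. Since the notions ``$(S,d_2,d_3)$-slice'' and ``$(d_2,d_3,s)$-isolated'' refer only to $G$, $C$ and the single set $S$, the slice $L$ also witnesses that $S$ is $(d_2,d_3,s)$-isolated in $(G,\{S\},C)$. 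Hence Lemma~\ref{lem:cylsep} applies to $(G,\{S\},C)$ and yields a clean joint $H$ with $V(H)\subseteq L^\circ$.

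It remains to read off the conclusion. By definition $L^\circ$ is the set of vertices of $G$ at distance at least $d_2+1$ and at most $d_3-1$ from $S$; since $d_2=d\ge D_1$ and $d_3=d+h(s)\le D_2$, every vertex of $H$ lies at distance at least $D_1$ and at most $D_2-1$ from the element $S\in\SS$. Finally, the definition of ``$(d_2,d_3,s)$-isolated by $L$'' includes $L\cap V(C)=\emptyset$, and $V(H)\subseteq L^\circ\subseteq L$, so $H$ is vertex-disjoint from $C$. I do not expect a real obstacle: the mathematical content is entirely inside Lemmas~\ref{lemma-isolate} and~\ref{lem:cylsep}, and the only care needed is the bookkeeping of parameters — choosing $h(s)$ to equal the gap $d_3-d_2$ of Lemma~\ref{lem:cylsep}, ensuring $d_2\ge 4$ and $s\ge 3$, and checking that multi-anomaly $(D_1,D_2)$-tightness descends to one-anomaly $(d_2,d_3)$-tightness of $(G,\{S\},C)$.
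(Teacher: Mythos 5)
Your proposal is correct and follows essentially the same route as the paper: choose $h(s)=34(s-2)(s+3)+474$, invoke Lemma~\ref{lemma-isolate} to obtain a $(d,d+h(s),s)$-isolated element $S\in\SS$, and then apply Lemma~\ref{lem:cylsep} to the one-anomaly scene $(G,\{S\},C)$ to extract the clean joint in $L^\circ$, reading off the distance bounds and disjointness from $C$ exactly as the paper does. Your extra bookkeeping (replacing $D_1$ by $\max(D_1,4)$ and checking $s\ge 3$ from the construction of $s$ in Lemma~\ref{lemma-isolate}) addresses side conditions of Lemma~\ref{lem:cylsep} that the paper's proof passes over silently, and is a legitimate, harmless refinement rather than a different argument.
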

\begin{proof}
We choose $D_2$ and $s$ according to Lemma~\ref{lemma-isolate} for the function $h(s)=34(s-2)(s+3)+474$.
By Lemma~\ref{lemma-isolate}, there exists an integer $d_2\ge D_1$ such that $d_3=d_2+h(s)\le D_2$
and some $S\in\SS$ is $(d_2,d_3,s)$-isolated by some slice~$L$.  By Lemma~\ref{lem:cylsep}
applied to $(G,\{S\},C)$, $G$ contains a clean joint $H$ with $V(H)\subseteq L^\circ$.
Consequently, $H$ is vertex-disjoint from $C$ and at distance at least $d_2+1>D_1$ and at most $d_3-1\le D_2-1$ from $S$.
\end{proof}

\section{Colorings of quadrangulations of a cylinder}\label{sec-cyl}

In this section, we give a lemma on extending a precoloring of
boundaries of a quadrangulated cylinder.
This is a special case of a more general theory which we develop in the following
paper of the series~\cite{trfree6}.

Let $C$ be a cycle drawn in plane, let $v_1,v_2,\ldots,v_k$ be the vertices of $C$ listed in the clockwise order
of their appearance on $C$, and let
$\varphi:V(C)\to\{1,2,3\}$ be a $3$-coloring of $C$. We can view $\varphi$
as a mapping of $V(C)$ to the vertices of a triangle, and speak of the
{\em winding number of $\varphi$ on $C$}, defined as the number of indices
$i\in\{1,2,\ldots,k\}$ such that $\varphi(v_i)=1$ and $\varphi(v_{i+1})=2$
minus the number of indices $i$ such that $\varphi(v_i)=2$ and $\varphi(v_{i+1})=1$,
where $v_{k+1}$ means $v_1$.  We denote the winding number of $\varphi$ on $C$ by $W_\varphi(C)$.

Consider a plane graph $G$ and its $3$-coloring $\varphi$.  For a face $f$ of $G$ bounded by a cycle $C$,
we define the {\em winding number of $\varphi$ on $f$}, which is denoted by $w_\varphi(f)$, as $-W_\varphi(C)$ if $f$ is the outer face of $G$
and as $W_\varphi(C)$ otherwise.  The following two propositions are easy to prove.

\begin{proposition}
\mylabel{wsum0}
Let $G$ be a plane graph such that every face of $G$ is bounded by a cycle, and let
$\varphi:V(G)\to\{1,2,3\}$ be a $3$-coloring of $G$. Then the sum of the winding
numbers of all the faces of $G$ is zero.
\end{proposition}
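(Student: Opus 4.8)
\textbf{Proof proposal for Proposition~\ref{wsum0}.}
The plan is to rewrite the winding number of a face as a sum of antisymmetric contributions over the \emph{oriented} edges of its boundary, and then to observe that summing over all faces makes every edge of $G$ contribute once in each orientation, so that all contributions cancel in pairs. Concretely, define $\omega:\{1,2,3\}^2\to\{-1,0,1\}$ by $\omega(1,2)=1$, $\omega(2,1)=-1$, and $\omega(a,b)=0$ in all remaining cases. The key elementary facts are: (i) $\omega$ is antisymmetric, i.e.\ $\omega(a,b)=-\omega(b,a)$ for all colors $a,b$; and (ii) if $C$ is a cycle with clockwise vertex listing $v_1,\dots,v_k$ then, directly from the definition, $W_\varphi(C)=\sum_{i=1}^k \omega(\varphi(v_i),\varphi(v_{i+1}))$ (indices mod $k$), so that reversing the traversal of $C$ negates the sum; in particular the counterclockwise traversal of $C$ yields $-W_\varphi(C)$.

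Next I would reinterpret $w_\varphi(f)$ uniformly for every face $f$. Orient the boundary cycle of each face $f$ so that $f$ lies on the right of each directed edge; for a bounded face this is the clockwise traversal, and for the outer (unbounded) face this is the counterclockwise traversal in the plane. With fact (ii), the $\omega$-sum along this oriented boundary equals $W_\varphi$ of the clockwise traversal when $f$ is bounded, and equals $-W_\varphi$ of the clockwise traversal when $f$ is the outer face. Comparing with the definition of $w_\varphi$, we get in all cases
\[
w_\varphi(f)=\sum_{(u\to v)\in\partial f}\omega\bigl(\varphi(u),\varphi(v)\bigr),
\]
where $\partial f$ denotes the boundary cycle of $f$ oriented with $f$ on its right.

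Now I would invoke the standard planarity bookkeeping: since every face of $G$ is bounded by a cycle, $G$ has no bridges, so each edge $uv\in E(G)$ is incident with exactly two distinct faces $f_1,f_2$, and in the ``face-on-the-right'' orientations the edge $uv$ is traversed as $u\to v$ in exactly one of $\partial f_1,\partial f_2$ and as $v\to u$ in the other. (This also forces $G$ to be connected, which is implicit in the hypothesis; no further case analysis is needed.) Consequently,
\[
\sum_{f}w_\varphi(f)=\sum_{f}\ \sum_{(u\to v)\in\partial f}\omega\bigl(\varphi(u),\varphi(v)\bigr)
=\sum_{uv\in E(G)}\Bigl(\omega(\varphi(u),\varphi(v))+\omega(\varphi(v),\varphi(u))\Bigr)=\sum_{uv\in E(G)}0=0,
\]
using antisymmetry of $\omega$ in the last step.

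The only real content is the planarity fact that, when each face boundary is oriented with the face on its right, every edge receives its two opposite orientations from its two incident faces; everything else is a short formal manipulation. I expect this incidence/orientation statement to be the main (though routine) obstacle, and I would either cite it as a well-known property of plane graphs or verify it quickly from the rotation system, after which the cancellation argument is immediate.
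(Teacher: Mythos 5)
Your proposal is correct: the paper does not spell out a proof (it only remarks that the proposition is easy), and your argument — expressing $w_\varphi(f)$ as the $\omega$-sum over the boundary oriented with $f$ on the right, then cancelling over each edge via antisymmetry, using that the cycle-boundary hypothesis rules out bridges so every edge lies on two distinct faces traversed in opposite directions — is exactly the standard intended argument. No gaps; this can stand as the omitted proof.
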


\begin{proposition}
\mylabel{wC4}
The winding number of every $3$-coloring on a cycle of length four is zero.
\end{proposition}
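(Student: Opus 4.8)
The plan is to fix a cyclic labelling $v_1,v_2,v_3,v_4$ of the vertices of the $4$-cycle $C$ (indices read modulo $4$), set $c_i=\varphi(v_i)$, and record that properness gives $c_i\neq c_{i+1}$ for all $i$. Define $f(a,b)=1$ if $(a,b)=(1,2)$, $f(a,b)=-1$ if $(a,b)=(2,1)$, and $f(a,b)=0$ otherwise; then by definition $W_\varphi(C)=\sum_{i=1}^{4}f(c_i,c_{i+1})$. The only property of $f$ I would use is that it is antisymmetric, $f(a,b)=-f(b,a)$ (so in particular $f(a,a)=0$) — this antisymmetry is exactly the cancellation mechanism underlying Proposition~\ref{wsum0}.

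The key step is to cut the cyclic sum along the diagonal pair $c_1,c_3$. Since $f(c_1,c_3)+f(c_3,c_1)=0$,
\[
W_\varphi(C)=\bigl(f(c_1,c_2)+f(c_2,c_3)+f(c_3,c_1)\bigr)+\bigl(f(c_1,c_3)+f(c_3,c_4)+f(c_4,c_1)\bigr)=T(c_1,c_2,c_3)+T(c_1,c_3,c_4),
\]
where $T(x,y,z):=f(x,y)+f(y,z)+f(z,x)$. A one-line computation from antisymmetry shows that reversing a triple negates $T$, i.e.\ $T(x,z,y)=-T(x,y,z)$.

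It then remains to split into two cases. If $c_1=c_3$, each of the triples $(c_1,c_2,c_3)$ and $(c_1,c_3,c_4)$ has a repeated entry, so $T$ of each vanishes by antisymmetry and $W_\varphi(C)=0$. If $c_1\neq c_3$, then since $v_2$ and $v_4$ are each adjacent to both $v_1$ and $v_3$ and only three colours are available, both $c_2$ and $c_4$ must be the unique colour outside $\{c_1,c_3\}$; hence $c_2=c_4$, the triple $(c_1,c_3,c_4)=(c_1,c_3,c_2)$ is the reversal of $(c_1,c_2,c_3)$, and $W_\varphi(C)=T(c_1,c_2,c_3)-T(c_1,c_2,c_3)=0$.

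I do not anticipate a genuine obstacle here; this is an elementary statement. The only points that need a moment's care are the observation that a proper $3$-colouring of a $4$-cycle forces the two vertices off the chosen diagonal to receive equal colours (which is what makes the two "half-triangles" carry opposite orientations), together with the sign bookkeeping for the degenerate triples. A fully pedestrian alternative, should one prefer to avoid even this much structure, is to case on which of the colours $1,2,3$ actually occur on $C$: if $\{1,2\}\not\subseteq\varphi(V(C))$ there are no relevant transitions; if exactly two colours occur the colouring alternates and the $1\to2$ transitions cancel the $2\to1$ ones; and if all three occur the repeated colour sits on a diagonal, again forcing the relevant transitions to cancel in pairs.
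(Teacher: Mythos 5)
Your proof is correct. The paper does not actually supply an argument for Proposition~\ref{wC4} (it is dismissed as easy to prove), so there is no official proof to diverge from; your verification is a perfectly valid substitute. The structured route you take --- writing $W_\varphi(C)=\sum_i f(c_i,c_{i+1})$ with $f$ antisymmetric, inserting the cancelling pair $f(c_1,c_3)+f(c_3,c_1)$ to split the cyclic sum into the two ``triangles'' $T(c_1,c_2,c_3)$ and $T(c_1,c_3,c_4)$, and then using that either $c_1=c_3$ (degenerate triples) or $c_2=c_4$ is forced by properness with three colours (so the two triples have opposite orientation) --- is sound; the identity $T(x,z,y)=-T(x,y,z)$ and the vanishing of $T$ on triples with a repeated entry both follow immediately from antisymmetry, and the case analysis is exhaustive. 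The pedestrian alternative you sketch (two colours force an alternating colouring whose $1\to2$ and $2\to1$ transitions cancel; three colours force the repeated colour onto a diagonal, again pairing the transitions) is presumably the one-line check the authors had in mind, and either version suffices; your diagonal-splitting argument has the mild additional virtue of making visible the same cancellation mechanism that underlies Proposition~\ref{wsum0}.
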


Let $G$ be a cylindrical quadrangulation with boundary faces $f_1$ and $f_2$.
We say that the cylindrical quadrangulation is \emph{boundary-linked} if every cycle $K$ in $G$ separating $f_1$ from $f_2$
and not bounding either of these faces has length at least $\max(|f_1|,|f_2|)$,
and if $|K|=|f_i|=\max(|f_1|,|f_2|)$ for some $i\in\{1,2\}$, then $V(K)\cap V(f_{3-i})\neq\emptyset$.
The cylindrical quadrangulation is \emph{long} if the distance between $f_1$ and $f_2$ is at least $|f_1|+|f_2|$.

\begin{lemma}\label{lemma-nbr}
Let $G$ be a long boundary-linked cylindrical quadrangulation with boundary faces $f_1$ and $f_2$ and let $\psi$ be a $3$-coloring
of the boundary of $G$.
Suppose that $|f_1|\ge \max(5,|f_2|)$ and let $v_1v_2v_3$ be a subpath of the cycle bounding $f_1$, where $\psi(v_1)=\psi(v_3)$.
Then, there exists a long boundary-linked cylindrical quadrangulation $G'$ with boundary faces $f_1'$ and $f_2'$ such that
$|f'_1|=|f_1|-2$ and $|f'_2|=|f_2|$ together with a $3$-coloring $\psi'$ of the boundary of $G'$ such that
$w_{\psi'}(f'_1)=w_{\psi}(f_1)$, $w_{\psi'}(f'_2)=w_{\psi}(f_2)$, and if $\psi'$ extends to a $3$-coloring of $G'$,
then $\psi$ extends to a $3$-coloring of $G$.
\end{lemma}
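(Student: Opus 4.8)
The plan is to reduce the circumference of $f_1$ by two by performing a local ``cut and repair'' near the subpath $v_1v_2v_3$, where $\psi(v_1)=\psi(v_3)$. Let $C_1$ be the cycle bounding $f_1$. First I would examine the two faces of $G$ incident with the edges $v_1v_2$ and $v_2v_3$ that are not $f_1$; call them $g_1$ (incident with $v_1v_2$) and $g_2$ (incident with $v_2v_3$). Since $G$ is a cylindrical quadrangulation and $f_1$ has length $|f_1|\ge 5$, both $g_1$ and $g_2$ are $4$-faces, so $g_1=v_1v_2xy$ and $g_2=v_2v_3zw$ for suitable vertices. The key structural point I need is that $g_1$ and $g_2$ are distinct (if $|f_1|\ge 5$ the two faces on the ``inside'' of a length-two subpath of $C_1$ cannot coincide, since that would force a short separating cycle violating boundary-linkedness or would make $G$ too small) and that identifying $v_1$ with $v_3$ — or equivalently deleting $v_2$ and adding an appropriate edge — yields a smaller cylindrical quadrangulation. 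Concretely, I would form $G'$ by deleting $v_2$, identifying $v_1$ and $v_3$ into a single vertex $v^*$, and if necessary adding one edge inside the region formerly occupied by $g_1\cup\{v_2\}\cup g_2$ so that all internal faces remain quadrilaterals; the new outer boundary $C_1'$ has length $|C_1|-2$. Set $f_2'=f_2$ and let $\psi'$ agree with $\psi$ everywhere except on $C_1'$, where $v^*$ receives the common color $\psi(v_1)=\psi(v_3)$.

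The next step is to verify that $G'$ has the claimed properties. The internal faces are quadrilaterals by construction, so $G'$ is a cylindrical quadrangulation with $|f_1'|=|f_1|-2$ and $|f_2'|=|f_2|$. For \emph{boundary-linkedness}: any cycle $K'$ in $G'$ separating $f_1'$ from $f_2'$ pulls back to a closed walk, hence contains a cycle $K$, in $G$ separating $f_1$ from $f_2$; passing through $v^*$ corresponds to using $v_1$ or $v_3$ (possibly both, via a detour through $v_2$), and in the worst case $|K|\le |K'|+2\le$, giving $|K'|\ge |K|-2\ge \max(|f_1|,|f_2|)-2\ge\max(|f_1'|,|f_2'|)$; the boundary-case refinement ($|K'|=|f_i'|$ forcing intersection with the other boundary) needs a short separate check using that $|f_1|\ge 5>|f_2|$ is no longer necessarily strict after decrementing, so I would handle the equality case by tracing exactly which vertices of $f_2$ a minimum-length separating cycle must meet. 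For \emph{longness}: distances only shrink by the single identification, so $\mathrm{dist}_{G'}(f_1',f_2')\ge \mathrm{dist}_G(f_1,f_2)-1\ge |f_1|+|f_2|-1$; since $|f_1'|+|f_2'|=|f_1|+|f_2|-2$, longness is preserved with room to spare.

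Now the winding numbers. This is where I would invoke Propositions~\ref{wsum0} and~\ref{wC4}. Because $\psi(v_1)=\psi(v_3)$, walking along $C_1$ through $v_1v_2v_3$ contributes a net of $0$ to the winding count from the pair of steps $v_1\to v_2\to v_3$ (the two steps either both contribute $0$, or one contributes $+1$ and the other $-1$, precisely because the endpoints have equal color): indeed, whatever color $v_2$ has, the contribution of $v_1\to v_2$ and $v_2\to v_3$ to $W_\psi(C_1)$ cancels, and collapsing to the single step $v_1\to v^*$ (a ``step'' of length zero in color space, contributing $0$) leaves $W_{\psi'}(C_1')=W_\psi(C_1)$. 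Hence $w_{\psi'}(f_1')=w_\psi(f_1)$; and since we did not touch $f_2$, $w_{\psi'}(f_2')=w_\psi(f_2)$. Finally, for the extension implication: suppose $\psi'$ extends to a $3$-coloring $\varphi'$ of $G'$. Define $\varphi$ on $G$ by $\varphi=\varphi'$ on $V(G)\setminus\{v_2\}$ (reading $\varphi'(v^*)$ for both $v_1$ and $v_3$, which is consistent since $\varphi'(v^*)=\psi(v_1)=\psi(v_3)$) and choose $\varphi(v_2)$ to be any color distinct from the colors of its (at most a bounded, easily-checked list of) neighbors $v_1,v_3,x,y$-side and $w,z$-side vertices; the point is that $v_2$'s neighborhood in $G$ consists of $v_1$, $v_3$, and the vertices across $g_1,g_2$, and since $\varphi'$ is proper on $G'$ where the edge we may have added already forbids the ``bad'' color, a valid choice for $\varphi(v_2)$ always remains. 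This yields a proper $3$-coloring of $G$ extending $\psi$.

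The main obstacle I anticipate is the case analysis ensuring that $G'$ is genuinely a cylindrical quadrangulation with \emph{cycle-bounded} boundary faces and, more delicately, that boundary-linkedness survives in the equality regime $|K'|=|f_i'|$ — the definition's side condition about $V(K)\cap V(f_{3-i})\neq\varnothing$ is exactly the kind of thing that can fail under an identification, so I expect the bulk of the real work to be a careful local-picture argument around $v_1,v_2,v_3$ (what the faces $g_1,g_2$ look like, whether $x=z$ or $y=w$ can happen, and whether the auxiliary edge is actually needed or would create a multi-edge, forcing a slightly different repair). Everything else — the distance bookkeeping for longness, and the winding-number computation via the equal-color cancellation — is routine given the propositions already established.
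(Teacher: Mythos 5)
Your construction is not the one that makes the lemma work, and the difference is fatal at the key step. You delete $v_2$, identify only $v_1$ and $v_3$, and try to repair the merged region with ``one extra edge if necessary''. Two problems. First, the region obtained by removing $v_2$ from the union of the internal faces incident with $v_2$ has boundary length $2\deg(v_2)-2$ after the identification, so for $\deg(v_2)\ge 5$ a single edge cannot restore quadrangular faces, and for $\deg(v_2)\ge 4$ any quadrangulating chord joins vertices at odd distance along that boundary, hence never joins two neighbors of $v_2$. Second, and more seriously, your back-extension step fails: in $G'$ the interior neighbors $x_1,\dots,x_{\deg(v_2)-2}$ of $v_2$ keep independent identities, and nothing prevents a $3$-coloring $\varphi'$ of $G'$ from assigning all three colors to $\{v^*,x_1,\dots\}$ (e.g.\ $\deg(v_2)=4$, hexagon $v^*m_1x_1m_2x_2m_3$ quadrangulated by the chord $v^*m_2$, with $\varphi'(v^*)=1$, $\varphi'(x_1)=2$, $\varphi'(x_2)=3$), in which case there is no admissible color for $v_2$ and $\varphi'$ does not pull back to $G$. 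Your assertion that ``the edge we may have added already forbids the bad color'' is exactly what cannot be arranged, since the repair chords never constrain two neighbors of $v_2$ against each other.

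The paper's proof avoids this by a different identification: it deletes $v_2$ and contracts \emph{all} neighbors of $v_2$ (which form an independent set, because boundary-linkedness with $\max(|f_1|,|f_2|)\ge 5$ excludes triangles away from $f_2$, and $f_1$ is induced) to a single vertex $w$, suppressing the resulting $2$-faces. This keeps every internal face quadrangular with no ad hoc edges, and any extension $\varphi$ of $\psi'$ pulls back by giving every neighbor of $v_2$ the color $\varphi(w)=\psi(v_1)$ and giving $v_2$ the color $\psi(v_2)$. The remaining work in the paper is precisely the part you deferred: verifying boundary-linkedness of $G'$ by lifting a separating cycle $K'$ to a cycle $K$ of $G$ (replacing $w$ by a neighbor of $v_2$ or by a path $xv_2y$, so $|K'|\ge |K|-2$) and then splitting into the cases $|f_1|>|f_2|$ and $|f_1|=|f_2|$, using parity and longness to rule out the equality configurations. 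Your winding-number cancellation and the longness bookkeeping are fine, but without the all-neighbors contraction the construction and the extension argument do not go through.
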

\begin{proof}
Note that since $\max(|f_1|,|f_2|)\ge 5$ and $G$ is boundary-linked, it follows that $G$ contains no triangle other than possibly the cycle bounding $f_2$, and thus the neighbors of $v_2$ form an independent set in $G_2$.
Furthermore, $f_1$ is an induced cycle.  Let $G'$ be the cylindrical quadrangulation obtained from $G-v_2$
by contracting all neighbors of $v_2$ (including $v_1$ and $v_3$) to a single vertex $w$ and by suppressing the arising $2$-faces.
Let $f'_1$ and $f'_2$ be the faces of $G'$ corresponding to $f_1$ and $f_2$, respectively.  Clearly, $G'$ is long.

Let $\psi'$ be the coloring of the boundary of $G'$ such that $\psi'(w)=\psi(v_1)$ and $\psi'(z)=\psi(z)$ for all vertices $z\neq w$
in the boundary.
If $\psi'$ extends to a $3$-coloring $\varphi$ of $G'$, then we can turn $\varphi$ into a $3$-coloring of $G$ extending $\psi$ by setting $\varphi(z)=\psi(v_1)$ for every neighbor $z$ of $v_2$
and $\varphi(v_2)=\psi(v_2)$. 

Consider a cycle $K'$ separating $f'_1$ from $f'_2$ in $G'$ and not bounding either of these faces.  Let $K$ be the corresponding cycle in $G$ (equal to $K'$, or obtained from $K'$
by replacing $w$ by a neighbor of $v_2$, or obtained from $K'$ by replacing $w$ by a path $xv_2y$ for some neighbors $x$ and $y$ of $v_2$).

Let us first consider the case that $|f_1|>|f_2|$.  Note that $|f_1|$ and $|f_2|$ have the same parity, and thus $|f_1|\ge |f_2|+2$ and
$|f_1'|\ge |f_1|-2\ge |f_2|$.  Consequently, $|K'|\ge |K|-2\ge |f_1|-2=\max(|f'_1|,|f'_2|)$.  Furthermore, the equality only holds if $v_2\in V(K)$
and $|K|=|f_1|$.  Since $G$ is boundary-linked, the latter implies that $K$ also contains a vertex incident with $f_2$.  However,
this contradicts the assumption that $G$ is long.  Therefore, we have $|K'|>\max(|f'_1|,|f'_2|)$.

Next, we consider the case that $|f_1|=|f_2|$, and thus $\max(|f'_1|,|f'_2|)=|f_2|>|f'_1|$.  If $|K|=|f_2|$, then since $G$ is boundary-linked, it would follow that $K$ intersects both $f_1$ and $f_2$,
contrary to the assumption that $G$ is long.  Therefore, $|K|>|f_2|$, and by parity, $|K|\ge |f_2|+2$.  Consequently, $|K'|\ge |K|-2\ge |f_2|$.  The equality can only hold
when $K$ contains $v_2$, and thus $K'$ contains the vertex $w$ incident with $f'_1$.  We conclude that $G'$ is boundary-linked.
\end{proof}

\begin{lemma}\label{lemma-elim4}
Let $G$ be a long cylindrical quadrangulation with boundary faces $f_1$ and $f_2$ and let $\psi$ be a $3$-coloring
of the boundary of $G$.  If $|f_1|=|f_2|=4$, then $\psi$ extends to a $3$-coloring of $G$.
\end{lemma}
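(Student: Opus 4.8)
The plan is to use the nowhere-zero $3$-flow reformulation of $3$-colorings, together with a winding-number computation. First I would recall the standard correspondence: a proper $3$-coloring of a plane graph $G$ is equivalent to a nowhere-zero $\mathbb{Z}_3$-flow in the dual $G^*$, and the winding number $w_\psi(f)$ we have defined is precisely the net flow value pushed around (the dual vertex of) the face $f$. Since $G$ is a quadrangulation except for the two boundary faces, every interior face has length four, so by Proposition~\ref{wC4} its winding number under any $3$-coloring is zero. By Proposition~\ref{wsum0} the sum of all winding numbers vanishes, hence $w_\psi(f_1)+w_\psi(f_2)=0$, i.e.\ $w_\psi(f_2)=-w_\psi(f_1)$. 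Because $|f_1|=|f_2|=4$, again by Proposition~\ref{wC4} we in fact have $w_\psi(f_1)=w_\psi(f_2)=0$, so the obvious necessary parity/winding condition is automatically satisfied here.

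The core of the argument is then to show that when both boundary cycles have length four and the cylinder is long, any prescribed boundary coloring with matching (here: zero) winding numbers does extend. I would proceed by induction on $|V(G)|$. If $G$ has a chord or a short separating cycle one can split off a $4$-face or apply Lemma~\ref{lemma-extdisk} after deleting the far boundary face (treating the resulting disk with outer face $f_1$ and a hole $f_2$, using that every separating cycle has length $\ge 4=|f_1|-1+1$... more precisely, using Lemma~\ref{lemma-extdisk} in the triangle-free setting after noting a length-$4$ cylindrical quadrangulation is triangle-free). Alternatively, and more cleanly, I would peel off one layer at a time: take the $4$-cycle $f_1=v_1v_2v_3v_4$; since every interior face is a $4$-face incident to $f_1$, the vertices at distance one form structure forcing that contracting opposite boundary vertices (or identifying across a $4$-face) produces a shorter long cylindrical quadrangulation with $|f_1'|=|f_2'|=4$ to which induction applies, and any extension pulls back. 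One must check that the winding number on $f_1$ is preserved under the reduction and that longness is maintained (this is where the hypothesis that the distance between $f_1$ and $f_2$ is at least $|f_1|+|f_2|=8$ gives slack).

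I expect the main obstacle to be the bookkeeping that guarantees each reduction step keeps the cylinder both \emph{long} and a genuine \emph{quadrangulation} with boundary faces of length four, and that no triangle or multi-edge is created when identifying vertices across a $4$-face — analogous to the care taken in Lemma~\ref{lemma-nbr}. In particular, identifying the two opposite vertices of an interior $4$-face could in principle create a chord of $f_1$ or a separating triangle; ruling this out requires exploiting that $G$ has girth four and that the two boundary cycles are far apart, so that any short cycle created would have to be non-separating and hence bound a $4$-face, which is harmless after suppressing resulting $2$-faces. Once the reduction is shown to terminate (at the base case where, say, $G$ is the $2\times 4$ cylindrical grid, whose single $4$-face gets... rather, the base case is a $4$-prism-like quadrangulation where a direct check or a final application of Lemma~\ref{lemma-extdisk} finishes it), the extension follows by composing the pull-back colorings, completing the proof.
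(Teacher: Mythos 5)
There is a genuine gap in the core of your plan: the peeling induction has no valid base case, and the winding-number condition you compute carries no information here. Since $|f_1|=|f_2|=4$, Proposition~\ref{wC4} forces $w_\psi(f_1)=w_\psi(f_2)=0$ for \emph{every} boundary coloring, so "matching winding numbers" is automatic and cannot be the criterion that makes the extension work; what actually makes the lemma true is the distance between the two boundary faces. Indeed, for short cylinders the statement is simply false: in the $2\times 4$ cylindrical grid (the cube), color both boundary $4$-cycles $1,2,1,2$ so that each vertex of $f_1$ and the vertex of $f_2$ adjacent to it receive the same color; this is a proper $3$-coloring of the boundary subgraph with zero winding numbers, yet it does not extend. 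Consequently your induction cannot terminate at a "$4$-prism-like" graph, and it cannot avoid getting there: each identification across a $4$-face can shrink the distance between $f_1$ and $f_2$, the hypothesis only gives a fixed slack of $8$, and the lemma you are inducting on requires longness, so after boundedly many peels the induction hypothesis is no longer available. The fallback you mention, Lemma~\ref{lemma-extdisk}, extends a precoloring of \emph{one} face only (the outer face), so it cannot finish any configuration in which both boundary $4$-cycles remain precolored; "deleting the far boundary face" does not delete the constraint on it. (The worry about separating cycles is comparatively minor---$G$ is bipartite, so identifying opposite vertices of a face creates no triangles---but the claim that any short cycle created must be non-separating is also unjustified, since this lemma has no boundary-linked hypothesis.)

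For contrast, the paper's proof is a one-shot construction rather than an induction. Since $\psi$ uses only three colors on the $4$-cycle $v_1v_2v_3v_4$ bounding $f_1$, one may assume $\psi(v_1)=\psi(v_3)$. Because $G$ is bipartite, the BFS layers from $\{v_2,v_4\}$ are independent sets; delete all vertices at distance at most two from $\{v_2,v_4\}$, identify the distance-three layer to a single vertex $w$, and suppress $2$-faces, obtaining a plane quadrangulation $G'$ whose relevant face corresponds to $f_2$ (longness is used exactly once, to guarantee this surgery stays away from $f_2$). By Lemma~\ref{lemma-extdisk}, the restriction of $\psi$ to the cycle bounding $f_2$ extends to a $3$-coloring $\varphi$ of $G'$. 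Finally, color the removed layers in a layered fashion: distance-one vertices get $\psi(v_1)=\psi(v_3)$, distance-three vertices get $\varphi(w)$, and distance-two vertices get a color distinct from both; together with $\psi$ on $v_2,v_4$ this yields a proper extension. If you want to salvage your approach, you would need a reduction that provably preserves a quantitative distance between the boundary faces, or an argument that handles both precolored faces directly in the terminal configurations---neither of which your sketch supplies.
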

\begin{proof}
Let $v_1v_2v_3v_4$ be the cycle bounding $f_1$.  Since $\psi$ uses only three colors, we can without loss of generality assume
$\psi(v_1)=\psi(v_3)$.
Note that $G$ is bipartite, and thus the vertices at distance exactly three from $\{v_2,v_4\}$ form an independent set.
Let $G'$ be the quadrangulation of the plane obtained from $G$ by removing all vertices at distance at most two from $\{v_2,v_4\}$, identifying all vertices at
distance exactly three from $\{v_2,v_4\}$ to a single (non-boundary) vertex $w$ and by suppressing the arising $2$-faces.

Let $\psi'$ be a restriction of $\psi$ to the $4$-cycle bounding the face of $G'$ corresponding to $f_2$. 
By Lemma~\ref{lemma-extdisk}, $\psi'$ extends to a $3$-coloring $\varphi$ of $G'$.
We can extend $\varphi$ to a $3$-coloring of $G$ as follows. Give all vertices at distance exactly $1$ from $\{v_2,v_4\}$ the color $\psi(v_1)=\psi(v_3)$,
all vertices at distance exactly $3$ from $\{v_2,v_4\}$ the color $\varphi(w)$ and all vertices at distance exactly $2$ from $\{v_2,v_4\}$ an arbitrary color
different from $\psi(v_1)$ and $\varphi(w)$.  The resulting assignment is a $3$-coloring of $G$ extending $\psi$.
\end{proof}

Next, we aim to use the connection between colorings and nowhere-zero flows first noticed by Tutte~\cite{tutteflow}.
We only need the following implication from flows to colorings.
A \emph{nowhere-zero $\mathbb{Z}_3$-flow} in a graph $G$ is an orientation of $G$ such that the difference
between the indegree and the outdegree of each vertex is divisible by $3$.
Given an orientation $\vec{G}^\star$ of the dual $G^\star$ of a connected plane graph $G$
and a directed edge $e\in E(\vec{G}^\star)$,
we define $l(e)=u$ and $r(e)=v$, where $uv$ is the edge of $G$ crossing $e$ and $u$ is to
the left of $e$.
\begin{proposition}\label{prop-flows}
Let $G$ be a connected plane graph and let $G^\star$ be its dual.  If $\vec{G}^\star$ is a nowhere-zero $\mathbb{Z}_3$-flow,
then $G$ has a $3$-coloring $\varphi$ such that $\varphi(r(e))-\varphi(l(e))\equiv 1\pmod 3$ for every $e\in E(\vec{G}^\star)$.
\end{proposition}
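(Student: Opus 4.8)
The plan is to use the flow--coloring duality of Tutte, specialized to $\mathbb{Z}_3$. For each directed edge $e\in E(\vec G^\star)$, let $uv$ be the primal edge it crosses, labelled so that $u=l(e)$ and $v=r(e)$, and set $\delta(u\to v):=1$ and $\delta(v\to u):=-1$ in $\mathbb{Z}_3$; this is well defined since the dual correspondence is a bijection on edges and, for a directed dual edge, exactly one endpoint of the crossing primal edge lies to its left. Thus $\delta$ assigns a nonzero element of $\mathbb{Z}_3$ to each directed edge of $G$ and flips sign under reversal. The goal is to produce a map $\varphi\colon V(G)\to\mathbb{Z}_3$ with $\varphi(v)-\varphi(u)\equiv\delta(u\to v)\pmod 3$ for every edge $uv$ of $G$: since $\delta$ is nowhere zero, such a $\varphi$ is automatically a proper $3$-coloring, and then $\varphi(r(e))-\varphi(l(e))\equiv\delta(l(e)\to r(e))=1$ for every $e\in E(\vec G^\star)$, which is exactly the conclusion.

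First I would check that $\delta$ is a $\mathbb{Z}_3$-\emph{tension}, i.e., that $\sum_{uv\in E(K)}\delta(u\to v)\equiv 0\pmod 3$ for every cycle $K$ of $G$ and every fixed direction of traversal of $K$. Given $K$, let $\phi_1,\dots,\phi_k$ be the faces of $G$ drawn inside the closed disk bounded by $K$ that avoids the outer face, and traverse the boundary walk of each $\phi_i$ so that $\phi_i$ stays on a fixed side. Every edge strictly inside the disk then occurs in exactly two of these walks with opposite orientations and cancels, while every edge of $K$ occurs in exactly one, oriented consistently with a fixed traversal of $K$; hence $\sum_{uv\in E(K)}\delta(u\to v)\equiv\sum_{i}\sum_{uv\in\partial\phi_i}\delta(u\to v)\pmod 3$. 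For a single face $\phi$, the edges of $\partial\phi$ are exactly the primal edges whose dual edges are incident with the vertex $\phi$ of $G^\star$, and --- by the left/right convention in the definitions of $l(e),r(e)$ and the chosen traversal --- the edge $uv$ contributes $+1$ precisely when $e^\star$ points out of $\phi$ in $\vec G^\star$ and $-1$ when it points into $\phi$; so $\sum_{uv\in\partial\phi}\delta(u\to v)$ equals the outdegree minus the indegree of $\phi$ in $\vec G^\star$, which is $\equiv 0\pmod 3$ because $\vec G^\star$ is a nowhere-zero $\mathbb{Z}_3$-flow. Summing over $i$ proves the claim.

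Next, since $G$ is connected, the tension $\delta$ admits a potential: fix $r\in V(G)$, put $\varphi(r)=0$, and for any other vertex $w$ set $\varphi(w):=\sum\delta(u\to v)$ summed over the edges of any walk from $r$ to $w$, each oriented along the walk. This is well defined because any two such walks differ by an element of the cycle space over $\mathbb{Z}_3$, on which $\delta$ vanishes by the previous step, and by construction $\varphi(v)-\varphi(u)\equiv\delta(u\to v)\pmod 3$ on every edge; as $\delta$ takes only the values $\pm1$, $\varphi$ is a proper $3$-coloring with the required property. The one delicate point I anticipate is the middle step: aligning the planar-duality sign conventions --- the meaning of ``$u$ is to the left of $e$'', the direction of the face boundary walks, and the sign of ``out of $\phi$'' versus ``into $\phi$'' --- so that the per-face sum is $\mathrm{outdeg}_{\vec G^\star}(\phi)-\mathrm{indeg}_{\vec G^\star}(\phi)$ rather than its negative (a global sign error would produce the value $2$ instead of $1$ in the conclusion). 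Everything else is routine $\mathbb{Z}_3$-linear algebra together with the fact that, in a connected plane graph, the signed indicator of a cycle is the $\mathbb{Z}_3$-sum of the signed indicators of the faces it encloses.
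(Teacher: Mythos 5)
Your proposal is correct, and it is the standard proof of Tutte's coloring--flow duality: the paper itself states Proposition~\ref{prop-flows} without proof, citing Tutte, so there is no in-paper argument to diverge from. Your decomposition (define the $\pm1$ tension $\delta$ from the dual orientation, verify it sums to $0$ modulo $3$ around every cycle by expressing the cycle as the sum of the enclosed face boundaries and invoking the flow condition at each dual vertex, then integrate to a potential $\varphi$ using connectivity) is exactly the intended classical argument. The sign-convention worry you flag is actually harmless: the per-face boundary sum only needs to be $\pm(\mathrm{outdeg}-\mathrm{indeg})\equiv 0\pmod 3$ for the tension check, while the final identity $\varphi(r(e))-\varphi(l(e))\equiv 1$ follows directly from your definition $\delta(l(e)\to r(e))=1$ and the potential property, independent of which sign occurs in that intermediate identification (and even a hypothetical global sign flip could be repaired by replacing $\varphi$ with $-\varphi$).
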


We say that a $3$-coloring $\psi$ of a cycle $C=v_1\ldots v_k$ is \emph{rotating} if $3|k$,
$\psi(v_1)=\psi(v_4)=\ldots=\psi(v_{3k-2})$, $\psi(v_2)=\psi(v_5)=\ldots=\psi(v_{3k-1})$, and $\psi(v_3)=\psi(v_6)=\ldots=\psi(v_{3k})$.
Note that for any $3$-coloring $\psi$ of $C$, we have $W_\psi(C)\le |C|/3$, with equality if and only if $\psi$ is rotating.

\begin{lemma}\label{lemma-cyl}
Let $G$ be a long boundary-linked cylindrical quadrangulation with boundary faces $f_1$ and $f_2$
and let $\psi$ be a $3$-coloring of the boundary of $G$.
The coloring $\psi$ extends to a $3$-coloring of $G$ if and only if $w_\psi(f_1)+w_\psi(f_2)=0$.
\end{lemma}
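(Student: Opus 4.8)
The plan is to prove the two implications separately. The forward implication is immediate from the winding-number calculus: if $\psi$ extends to a $3$-coloring $\varphi$ of $G$, then since every face of $G$ is bounded by a cycle, Proposition~\ref{wsum0} gives that the winding numbers of all faces of $G$ sum to $0$; every face other than $f_1$ and $f_2$ has length four and hence contributes $0$ by Proposition~\ref{wC4}, so $w_\psi(f_1)+w_\psi(f_2)=w_\varphi(f_1)+w_\varphi(f_2)=0$.

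For the converse, assume $w_\psi(f_1)+w_\psi(f_2)=0$ and argue by induction on $|f_1|+|f_2|$, taking $|f_1|\ge|f_2|$ without loss of generality. First I would record the elementary fact that a proper $3$-coloring $\psi$ of a cycle fails to be rotating exactly when the cycle has three consecutive vertices $v_1v_2v_3$ with $\psi(v_1)=\psi(v_3)$: if no such triple exists, then $\psi(v_{i+2})$ is forced to be the color avoiding $\psi(v_i)$ and $\psi(v_{i+1})$, and this propagates the rotating pattern around the whole cycle. Now if $|f_1|\ge 5$ and $\psi$ is not rotating on the cycle bounding $f_1$, I would pick such a triple and apply Lemma~\ref{lemma-nbr} (using $|f_1|\ge\max(5,|f_2|)$) to get a long boundary-linked cylindrical quadrangulation $G'$ with a boundary coloring $\psi'$ satisfying $|f_1'|=|f_1|-2$, $|f_2'|=|f_2|$, $w_{\psi'}(f_1')=w_\psi(f_1)$, $w_{\psi'}(f_2')=w_\psi(f_2)$, and the property that $\psi'$ extending to $G'$ implies $\psi$ extending to $G$; since then $w_{\psi'}(f_1')+w_{\psi'}(f_2')=0$, the induction hypothesis applies to $G'$. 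The same reduction, with $f_1$ and $f_2$ interchanged, applies when $|f_1|=|f_2|\ge 5$ and $\psi$ is not rotating on the cycle bounding $f_2$. So we may assume neither reduction is available.

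If $|f_1|\le 4$, then $3\le|f_2|\le|f_1|\le 4$; since the winding number of every $3$-coloring of a $4$-cycle is $0$ (Proposition~\ref{wC4}) while that of every proper $3$-coloring of a triangle has absolute value $1$, the hypothesis $w_\psi(f_1)+w_\psi(f_2)=0$ rules out $|f_1|=4,\,|f_2|=3$, and hence $|f_1|=|f_2|\in\{3,4\}$. If this common value is $4$, Lemma~\ref{lemma-elim4} finishes (the hypothesis being automatic), and if it is $3$ we are in the situation of the next paragraph (every proper $3$-coloring of a triangle is rotating). If instead $|f_1|\ge 5$, then $\psi$ is rotating on the cycle bounding $f_1$, so $3\mid|f_1|$ and $|w_\psi(f_1)|=|f_1|/3$; since $|w_\psi(f_2)|\le|f_2|/3\le|f_1|/3$ always holds, the hypothesis forces $|f_1|=|f_2|=:k$, $\psi$ rotating on the cycle bounding $f_2$ as well, and $\{w_\psi(f_1),w_\psi(f_2)\}=\{k/3,-k/3\}$.

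In this remaining case I would build the extension via the flow-to-coloring correspondence of Proposition~\ref{prop-flows}. The identity $\varphi(r(e))-\varphi(l(e))\equiv 1\pmod 3$ together with the demand that the induced coloring $\varphi$ agree with $\psi$ on the boundary pins down the orientation of every edge of $G^\star$ crossing an edge of either boundary cycle; because $\psi$ is rotating, all $k$ such edges at the vertex of $G^\star$ dual to $f_1$ get the same orientation, and likewise at the vertex dual to $f_2$, so with a consistent sign convention the net flows at these two dual vertices are $3w_\psi(f_1)$ and $3w_\psi(f_2)$ — both divisible by $3$ and summing to $0$ by hypothesis. One then extends this partial orientation to an orientation of all of $G^\star$ in which every remaining (degree-four) dual vertex has equal in- and out-degree, obtaining a nowhere-zero $\mathbb{Z}_3$-flow, and Proposition~\ref{prop-flows} produces a $3$-coloring of $G$ whose edge-increments agree with those of $\psi$ along each boundary cycle; after a global recoloring it equals $\psi$ on one boundary cycle and differs from $\psi$ by a single additive constant on the other. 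The main obstacle is to arrange that this constant is $0$: extending the boundary orientation to a balanced orientation of the degree-four interior is a feasibility statement for the quadrangulated cylinder, and making the induced coloring agree with $\psi$ on \emph{both} boundary cycles is exactly where $w_\psi(f_1)+w_\psi(f_2)=0$ enters — it says $\psi$ does not wind around the cylinder, so the flow can be corrected by a $\mathbb{Z}_3$-circulation running once around the cylinder (which, being long and boundary-linked, $G$ admits far from both boundary faces, so the nowhere-zero property survives) to eliminate the discrepancy.
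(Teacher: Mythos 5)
Your overall skeleton matches the paper's: the forward direction via Propositions~\ref{wsum0} and \ref{wC4}, the induction on $|f_1|+|f_2|$ with Lemma~\ref{lemma-nbr} eliminating non-rotating triples, Lemma~\ref{lemma-elim4} for the $4$--$4$ case, and a dual-flow argument in the remaining case where $\psi$ is rotating on both boundaries with $|f_1|=|f_2|$ divisible by $3$. But in that remaining case, which is the heart of the lemma, your proposal skips the two steps that actually carry the difficulty. First, you assert that the boundary orientation ``extends to an orientation of all of $G^\star$ in which every remaining (degree-four) dual vertex has equal in- and out-degree'' as if this were routine. It is not: since each dual edge carries value $\pm 1$, such an extension forces every edge-cut of $G^\star$ (with the two boundary cut-cycles $K_1^\star,K_2^\star$ removed) separating $f_1^\star$ from $f_2^\star$ to have at least $|f_1|$ edges, and establishing this is precisely where the hypotheses ``boundary-linked'' and ``long'' are used. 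The paper proves it by a careful minimal-cut argument (the cycle $K$, the subpath $P$ and the rerouted cycle $K'=(K-P)\cup Q_1$), then invokes Menger to get $|f_1|$ edge-disjoint dual paths and decomposes the rest into cycles. Without this step your construction has no justification, and indeed the statement fails if short separating cycles are allowed, so some argument of this kind is unavoidable.

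Second, your fix for the additive constant on $f_2$ --- ``correct by a $\mathbb{Z}_3$-circulation running once around the cylinder \dots\ so the nowhere-zero property survives'' --- is not justified. Adding $1$ along an arbitrary non-contractible dual cycle turns any edge currently carrying $-1$ (relative to the direction of the circulation) into a $0$ edge, destroying the nowhere-zero property; you would need a non-contractible dual cycle that is \emph{coherently oriented} in the flow you built, and nothing in your construction (nor in ``long and boundary-linked'' by itself) provides one. The paper sidesteps this by orienting the two boundary cut-cycles $K_1^\star$ and $K_2^\star$ coherently (clockwise) from the start, so that reversing them is a legitimate circulation; reversing $K_1^\star$ and then $K_2^\star$ yields three flows whose colorings, after normalizing on $f_1$, restrict to the three cyclic shifts of a rotating coloring on $f_2$, and one of them matches $\psi$. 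So the idea you label ``the main obstacle'' is resolved in the paper by a specific device that your sketch does not supply, and the feasibility question you treat as routine is the other essential ingredient; as written, the proposal has genuine gaps at both points.
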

\begin{proof}
If $\psi$ extends to a $3$-coloring of $G$, then $w_\psi(f_1)+w_\psi(f_2)=0$ by Propositions~\ref{wsum0} and \ref{wC4}.

Let us now show the converse implication.  We proceed by induction on $|f_1|+|f_2|$, and thus we can assume that the claim
holds for all graphs whose boundary has less than $|f_1|+|f_2|$ vertices.
By symmetry, we can assume that $|f_1|\ge |f_2|$.

If $|f_1|=4$, then since $|f_1|$ and $|f_2|$ have
the same parity, we have $|f_2|=4$, and $\psi$ extends to a $3$-coloring of $G$ by Lemma~\ref{lemma-elim4}.
Thus, assume $|f_1|\ge 5$.

If the cycle bounding $f_1$ contains a path $v_1v_2v_3$ with $\psi(v_1)=\psi(v_3)$, then $\psi$ extends to a $3$-coloring of $G$ by
Lemma~\ref{lemma-nbr} and the induction hypothesis.
Therefore, we can assume that the boundary cycle of $f_1$ contains no such path, and thus $\psi$ is rotating on this cycle.
It follows that $|f_1|$ is a multiple of $3$ and $|w_\psi(f_1)|=|f_1|/3$.  Since $w_\psi(f_1)+w_\psi(f_2)=0$, we have $|w_\psi(f_2)|=|f_1|/3$,
and since $|f_2|\le |f_1|$, we conclude that $\psi$ is also rotating on the boundary of $f_2$ and $|f_2|=|f_1|$.
Since $G$ is long and boundary-linked, every cycle in $G$ that separates $f_1$ from $f_2$ and does not bound either of the
faces has length at least $|f_1|+2$.

Let $G^\star$ be the dual of $G$.
Let $K_i$ be the edge-cut in $G$ consisting of the edges incident with $V(f_i)$ that do not belong to $E(f_i)$.  Note that the dual $K^\star_i$ of $K_i$ is
a cycle in $G^\star$.  Let $H=G^\star-(E(K^\star_1)\cup E(K^\star_2))$.  Let $f^\star_1$ and $f^\star_2$ be the vertices of the dual corresponding to $f_1$ and $f_2$, respectively.
Suppose that $H$ contains an edge-cut of size less than $|f_1|$ separating
$f^\star_1$ from $f^\star_2$, and thus $G^\star$ contains an edge cut $K^\star$ separating
$f^\star_1$ from $f^\star_2$ with less than $|f_1|$ edges belonging to $E(K^\star_1)\cup E(K^\star_2)$.
Choose $K^\star$ as a minimal edge-cut with this property; then the dual $K$ to $K^\star$ is a cycle in $G$
separating $f_1$ from $f_2$ such that $|E(K)\setminus (E(K_1)\cup E(K_2))|<|f_1|$.  In particular, this implies $K$ bounds neither $f_1$ nor $f_2$.
Since $G$ is long, $K$ does not intersect both $K_1$ and $K_2$.  As we observed before, $|K|\ge |f_1|+2$,
and thus we can by symmetry assume that $K$ intersects $K_1$ in at least three edges.
Let us choose such a cycle $K$ that shares as many edges with the cycle bounding $f_1$ as possible.
Let $P$ be a subpath of $K$ with both endpoints incident with $f_1$, but no other vertex or edge incident with $f_1$.
Let $Q_1$ and $Q_2$ be the two subpaths of the cycle bounding $f_1$ joining the endpoints of $P$ labelled so that $P\cup Q_2$
is a cycle separating $f_1$ from $f_2$.  Consider the cycle $K'=(K-P)\cup Q_1$.  Since $K$ intersects $K_1$ in at least three edges,
$K'$ is not the cycle bounding $f_1$.  Since $K'$ shares more edges with the cycle bounding $f_1$ than $K$,
the choice of $K$ implies that
\begin{align*}
|E(K')\setminus (E(K_1)\cup E(K_2))|\ge|f_1|&>|E(K)\setminus (E(K_1)\cup E(K_2))|\text{, and thus}\\
|E(Q_1)\setminus (E(K_1)\cup E(K_2))|&>|E(P)\setminus (E(K_1)\cup E(K_2))|.
\end{align*}
Since $|E(Q_1) \cap (E(K_1)\cup E(K_2))|=0$ and
$|E(P) \cap (E(K_1)\cup E(K_2))|=2$, we conclude that $|Q_1|>|P|-2$.  However, then
the cycle $P\cup Q_2$ has length less than $|f_1|+2$, contradicting the assumption that $G$ is boundary-linked.

Therefore, $H$ does not contain any edge-cut of size less than $|f_1|$ separating
$f^\star_1$ from $f^\star_2$, and by Menger's theorem, $H$ contains pairwise edge-disjoint paths $P_1$, \ldots, $P_{|f_1|}$ joining $f^\star_1$ with $f^\star_2$.
Note that all vertices of $H'=H-E(P_1\cup P_2\cup\ldots\cup P_{|f_1|})$ have even degree, and thus $H'$ is a union of pairwise edge-disjoint cycles $C_1$, \ldots, $C_m$.
For $1\le i\le m$, direct the edges of $C_i$ so that all vertices of $C_i$ have outdegree $1$.  For $1\le i\le |f_1|$, direct the edges of $P_i$ so that all its vertices except for $f_1^\star$ have outdegree $1$.
This gives an orientation $\vec{H}$ of $H$ such that the indegree of every vertex of $V(H)\setminus \{f^\star_1,f^\star_2\}$ equals its outdegree, $f^\star_1$ has outdegree $0$ and $f^\star_2$ has indegree $0$.
Let $\vec{G}^\star_1$ be the orientation of $G^\star$ obtained from $\vec{H}$ by orienting all edges of $K^\star_1$ and $K^\star_2$ in the clockwise direction along the cycles.
Let $\vec{G}^\star_2$ be the orientation of $G^\star$ obtained from $\vec{G}^\star_1$ by reversing the orientation of the edges of $K^\star_1$, and let
$\vec{G}^\star_3$ be the orientation of $G^\star$ obtained from $\vec{G}^\star_2$ by reversing the orientation of the edges of $K^\star_2$.

Since $|f_1|=|f_2|$ is a multiple of $3$, it follows that the orientations $\vec{G}^\star_1$, $\vec{G}^\star_2$ and $\vec{G}^\star_3$ define nowhere-zero $\mathbb{Z}_3$-flows in $G^\star$.
Let $\varphi_1$, $\varphi_2$ and $\varphi_3$ be the corresponding $3$-colorings of $G$ arising from Proposition~\ref{prop-flows}.
Since $f_1^\star$ has outdegree $0$ in all three orientations, these $3$-colorings are rotating on the boundary of $f_1$,
and thus we can permute the colors so that the restrictions of $\varphi_1$, $\varphi_2$, and $\varphi_3$ to the cycle bounding $f_1$ match $\psi$.
Similarly, for $i\in\{1,2,3\}$, the coloring $\varphi_i$ is rotating on the boundary of $f_2$. Propositions~\ref{wsum0} and \ref{wC4}
imply $w_{\varphi_i}(f_1)+w_{\varphi_i}(f_2)=0$, and since $w_\psi(f_1)+w_\psi(f_2)=0$ and $w_\psi(f_1)=w_{\varphi_i}(f_1)$,
we conclude $w_{\varphi_i}(f_2)=w_\psi(f_2)$.  Consequently, the restrictions of $\varphi_1$, $\varphi_2$ and $\varphi_3$ to the boundary of $f_2$
differ from $\psi$ only by a cyclic permutation of colors.
Observe that the colors $\varphi_1(v)$, $\varphi_2(v)$ and $\varphi_3(v)$ are pairwise distinct for every $v\in V(f_2)$, since the reversals of
the orientations of $K^\star_1$ and $K^\star_2$ cyclically permute the colors on the boundary of $f_2$.
Consequently, one of these colorings matches $\psi$ on the boundary of $f_2$, and thus there exists $i\in\{1,2,3\}$ such
that $\varphi_i$ is a $3$-coloring of $G$ extending $\psi$.
\end{proof}

The inspection of the proofs of Lemmas~\ref{lemma-nbr}, \ref{lemma-elim4}, and \ref{lemma-cyl} shows that they are constructive
and can be implemented as linear-time algorithms to find the described $3$-colorings (Lemma~\ref{lemma-extdisk} is only used in the proof of Lemma~\ref{lemma-elim4}
to extend the precoloring of a $4$-cycle, and a linear-time algorithm for this special case appears in~\cite{DvoKawTho}).  Hence, we obtain the following
corollary which we use in the next paper of the series~\cite{trfree6}.

\begin{corollary}\label{cor-cylqalg}
For all positive integers $d_1$ and $d_2$, there exists a linear-time algorithm as follows.
Let $G$ be a cylindrical quadrangulation with boundary faces $f_1$ and $f_2$
and let $\psi$ be a $3$-coloring of the boundary of $G$ such that $w_\psi(f_1)+w_\psi(f_2)=0$.
Suppose that $|f_1|=d_1$, $|f_2|=d_2$, 
every cycle in $G$ separating $f_1$ from $f_2$ and not bounding either of these faces has length greater than $\max(d_1,d_2)$,
and the distance between $f_1$ and $f_2$ is at least $d_1+d_2$.
Then the algorithm returns a $3$-coloring of $G$ that extends $\psi$.
\end{corollary}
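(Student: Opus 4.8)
The plan is to turn the induction in the proof of Lemma~\ref{lemma-cyl} into a recursive algorithm and to check that each level of the recursion costs $O(|V(G)|)$ while the recursion depth depends only on $d_1$ and $d_2$. First observe that the hypotheses of the corollary imply that $G$ is a long boundary-linked cylindrical quadrangulation: a separating cycle not bounding $f_1$ or $f_2$ of length greater than $\max(d_1,d_2)$ has length at least $\max(|f_1|,|f_2|)$ and renders the equality clause in the definition of boundary-linked vacuous, while distance at least $d_1+d_2=|f_1|+|f_2|$ is exactly longness. Hence by Lemma~\ref{lemma-cyl} and $w_\psi(f_1)+w_\psi(f_2)=0$ a coloring as required exists, and the point is to produce one efficiently. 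I would run the recursion on the class of long boundary-linked cylindrical quadrangulations whose boundary faces have length at most $\max(d_1,d_2)$ and whose boundary precoloring has winding-number sum zero; this class is closed under the reduction of Lemma~\ref{lemma-nbr} (the only operation that changes boundary sizes), which strictly decreases the quantity $|f_1|+|f_2|$, starting from $d_1+d_2$.

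The reduction step is as follows. Relabel so that $|f_1|\ge|f_2|$. If $|f_1|\ge 5$ and the cycle bounding $f_1$ contains a subpath $v_1v_2v_3$ with $\psi(v_1)=\psi(v_3)$, carry out the construction in the proof of Lemma~\ref{lemma-nbr}: delete $v_2$, contract its neighborhood to a single vertex, and suppress the resulting $2$-faces; maintaining the rotation system and the face incidences, this is a linear-time operation. The resulting pair $(G',\psi')$ lies in the same class, with $|f'_1|+|f'_2|=|f_1|+|f_2|-2$ and $w_{\psi'}(f'_1)+w_{\psi'}(f'_2)=w_\psi(f_1)+w_\psi(f_2)=0$ by the conclusion of Lemma~\ref{lemma-nbr}; recurse on it and lift the returned coloring back to $G$ by the explicit rule of that proof (every neighbor of $v_2$ gets the color of the contracted vertex, and $v_2$ keeps its color), again in linear time.

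If no such subpath exists, $\psi$ is rotating on the cycle bounding $f_1$ and we are at a base case. When $|f_1|=|f_2|=4$ I would follow the proof of Lemma~\ref{lemma-elim4}: BFS from $\{v_2,v_4\}$ to compute distance layers, delete the vertices at distance at most two, identify those at distance exactly three to one vertex, suppress the arising $2$-faces to get a plane quadrangulation $G'$, extend the induced precoloring of its $4$-face corresponding to $f_2$ by the linear-time algorithm of~\cite{DvoKawTho} (whose correctness is Lemma~\ref{lemma-extdisk}), and extend this coloring back to $G$ by the explicit rule in that proof --- all linear. Otherwise, as shown in the proof of Lemma~\ref{lemma-cyl}, $|f_1|=|f_2|=:k$ is a multiple of $3$ and $\psi$ is rotating on both boundary cycles; then I would build the dual $G^\star$ and the graph $H$ obtained from it by deleting the two cut-cycles $K^\star_1,K^\star_2$ around $f_1$ and $f_2$ (as in that proof), find $k\le\max(d_1,d_2)$ pairwise edge-disjoint $f^\star_1$--$f^\star_2$ paths in $H$ by $k$ augmenting searches (Menger's theorem; $O(k\,|V(G)|)=O(|V(G)|)$ since $k$ is bounded), decompose the remaining edges of $H$ into cycles in one traversal, orient everything as prescribed to obtain the three nowhere-zero $\mathbb{Z}_3$-flows $\vec G^\star_1,\vec G^\star_2,\vec G^\star_3$, convert each to a $3$-coloring of $G$ via Proposition~\ref{prop-flows} by a breadth-first traversal of $G$, permute colors so that each matches $\psi$ on the boundary of $f_1$, and output the one of the three that also matches $\psi$ on the boundary of $f_2$ --- the proof of Lemma~\ref{lemma-cyl}, using $w_\psi(f_1)+w_\psi(f_2)=0$ and Propositions~\ref{wsum0} and \ref{wC4}, guarantees such a one exists.

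For the running time: each reduction strictly decreases $|f_1|+|f_2|$ by $2$, so at most $(d_1+d_2)/2$ of them occur before a base case, and since every reduction, base-case computation, and lift-back is $O(|V(G)|)$ (and the graph only shrinks), the whole algorithm runs in $O((d_1+d_2)|V(G)|)=O(|V(G)|)$ time. The main obstacle will be purely in the low-level bookkeeping rather than in the mathematics, which is already settled by the constructive proofs of Lemmas~\ref{lemma-nbr}, \ref{lemma-elim4}, and \ref{lemma-cyl}: one must verify that contracting a vertex neighborhood, suppressing $2$-faces, maintaining the plane embedding and face-incidence data, and the BFS layering can all be done so that a single recursion level is genuinely linear, and that the invariants (cylindrical quadrangulation, long, boundary-linked, bounded boundary lengths, winding-number sum zero) really do survive each reduction; spelling these details out is what the proof of the corollary amounts to.
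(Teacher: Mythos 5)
Your proposal is correct and matches the paper's own (very terse) justification, which simply observes that the proofs of Lemmas~\ref{lemma-nbr}, \ref{lemma-elim4}, and \ref{lemma-cyl} are constructive and linear-time implementable, with the precolored $4$-cycle extension in Lemma~\ref{lemma-elim4} handled by the algorithm of~\cite{DvoKawTho}. You in fact supply more detail than the paper does (bounded recursion depth, the Menger step via boundedly many augmenting searches, and the check that longness and boundary-linkedness survive each reduction), all of which is consistent with the intended argument.
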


We also need another result similar to Lemma~\ref{lemma-cyl}.

\begin{corollary}\label{cor-cyl}
Let $G$ be a joint with boundary faces $f_1$ and $f_2$ and let $\psi$ be a $3$-coloring of the boundary of $G$ such that $w_\psi(f_1)+w_\psi(f_2)=0$.
If $|w_\psi(f_1)|<|f_1|/3$, then $\psi$ extends to a $3$-coloring of $G$.
\end{corollary}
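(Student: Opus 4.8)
The plan is to reduce Corollary~\ref{cor-cyl} to Lemma~\ref{lemma-cyl} by verifying that a joint is, in particular, a long boundary-linked cylindrical quadrangulation, and then to deal with the slight mismatch in the hypotheses. Recall that a joint has $|f_1|=|f_2|$, every separating cycle not bounding $f_1$ or $f_2$ has length at least $|f_1|$, and the distance between $f_1$ and $f_2$ is at least $4|f_1|$. First I would check that a joint is long: since $|f_1|=|f_2|$, the distance between the boundary faces is at least $4|f_1|\ge |f_1|+|f_2|$, so the ``long'' condition holds. Next I would check ``boundary-linked'': we need that every separating cycle $K$ (not bounding $f_1$ or $f_2$) has length at least $\max(|f_1|,|f_2|)=|f_1|$, which is exactly the joint condition; and if $|K|=|f_i|=\max(|f_1|,|f_2|)$ for some $i$, then $V(K)\cap V(f_{3-i})\neq\varnothing$. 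For the latter, I would argue that if $K$ has length $|f_1|$ and is disjoint from the boundary of $f_{3-i}$, then $K$ would have to lie within distance... actually, more simply: if $V(K)\cap V(f_{3-i})=\varnothing$, then $K$ separates $f_i$ from $f_{3-i}$ and is disjoint from one of them, but then the distance from $f_1$ to $f_2$ is at most the distance from $f_i$ to $K$ plus the distance from $K$ to $f_{3-i}$; since $K$ has no vertex of $f_{3-i}$, one can route through $K$, and the total distance is bounded by something like $2|K|=2|f_1| < 4|f_1|$, contradicting the joint's distance requirement. (I would need to be a little careful here: the right bound comes from observing that any vertex of $K$ is within distance $|K|/2$ of any other vertex of $K$ along $K$, so if $K$ meets neither boundary then it is ``close'' to nothing useful — the cleanest argument is that $K$ must meet at least one of the two boundary cycles since it is a shortest-type obstruction, and if it meets $f_i$ only then walking along $K$ plus a shortest path gives distance at most $|f_1| + \mathrm{dist}(K,f_{3-i})$, which still needs care. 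Alternatively, since the joint's length hypothesis $4|f_1|$ is so generous, I expect this case simply cannot arise: a cycle of length exactly $|f_1|$ meeting only $f_i$ would force a short path between the boundaries.)

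Once a joint is seen to be a long boundary-linked cylindrical quadrangulation, Lemma~\ref{lemma-cyl} applies verbatim: since $w_\psi(f_1)+w_\psi(f_2)=0$, the coloring $\psi$ extends to a $3$-coloring of $G$. At this point the extra hypothesis $|w_\psi(f_1)|<|f_1|/3$ of Corollary~\ref{cor-cyl} is not even needed — which makes me suspect that the intended statement of Corollary~\ref{cor-cyl} is weaker than Lemma~\ref{lemma-cyl} in its flow-theoretic content but is stated separately because a ``joint'' is the object that actually appears in Lemmas~\ref{lem:cylinder}, \ref{lem:cylsep}, and \ref{lemma-struct}, whereas ``long boundary-linked cylindrical quadrangulation'' is the hypothesis of Lemma~\ref{lemma-cyl}. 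So the real content of the proof is purely the translation between these two vocabularies. If in fact joints are \emph{not} automatically boundary-linked in the relevant edge case, then the role of $|w_\psi(f_1)|<|f_1|/3$ becomes clear: by the remark preceding Lemma~\ref{lemma-cyl}, $W_\psi(C)=|C|/3$ exactly when $\psi$ is rotating, so $|w_\psi(f_1)|<|f_1|/3$ says precisely that $\psi$ is \emph{not} rotating on the boundary of $f_1$; and the only place in the proof of Lemma~\ref{lemma-cyl} where boundary-linkedness is used with cycles of length exactly $|f_1|+2$ is in the ``rotating'' branch of the argument. In the non-rotating case, the proof of Lemma~\ref{lemma-cyl} reduces via Lemma~\ref{lemma-nbr} and induction without ever invoking that branch, and Lemma~\ref{lemma-nbr} only needs that $G$ is long and boundary-linked in the weaker sense ``separating cycles have length at least $\max(|f_1|,|f_2|)$, with equality forcing a vertex on the far boundary'' — which a joint does satisfy.

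The key steps, in order, are: (1) observe a joint is long; (2) observe a joint is boundary-linked (resolving the length-exactly-$|f_1|$ edge case using the $4|f_1|$ distance bound); (3) invoke Lemma~\ref{lemma-cyl}. Alternatively, if step (2) is genuinely delicate, replace (2)--(3) by: (2$'$) use $|w_\psi(f_1)|<|f_1|/3$ to conclude $\psi$ is non-rotating on $f_1$, hence by the remark before Lemma~\ref{lemma-cyl} the boundary cycle of $f_1$ contains a path $v_1v_2v_3$ with $\psi(v_1)=\psi(v_3)$ (after possibly switching which boundary we call $f_1$, using $|f_1|=|f_2|$ and $|f_1|\ge$ — wait, a joint with $|f_1|=|f_2|=4$ would need Lemma~\ref{lemma-elim4}, so handle that base case first); (3$'$) apply Lemma~\ref{lemma-nbr} followed by induction on $|f_1|+|f_2|$, exactly mirroring the non-rotating branch of Lemma~\ref{lemma-cyl}'s proof, noting each step preserves ``joint-ness'' (the distance stays $\ge 4|f_1'|$ since $4|f_1'| = 4|f_1|-8 \le $ original distance, and separating cycles in $G'$ correspond to separating cycles in $G$ of length at most two more). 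The main obstacle I anticipate is the boundary-linkedness edge case in step (2): verifying that a separating cycle of length exactly $|f_1|$ in a joint must touch the opposite boundary cycle. I expect this follows from the distance hypothesis $4|f_1|$ being much larger than $2|f_1|$, but the precise routing argument (turning ``$K$ avoids $V(f_{3-i})$'' into a short path between $f_1$ and $f_2$) requires a small amount of care about how a shortest path from $f_i$ can be continued through or around $K$ to reach $f_{3-i}$.
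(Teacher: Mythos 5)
Your central step (2) is false: a joint need \emph{not} be boundary-linked, and the relevant ``edge case'' cannot be ruled out by the $4|f_1|$ distance bound. The canonical counterexample is exactly the object that Lemma~\ref{lem:cylinder} produces, a $(4s+1)\times s$ cylindrical grid: it satisfies the joint conditions ($|f_1|=|f_2|=s$, separating cycles have length at least $s$, the boundaries are at distance $4s$), yet every intermediate ``ring'' is a cycle of length exactly $s=\max(|f_1|,|f_2|)$ separating $f_1$ from $f_2$ and meeting \emph{neither} boundary cycle; such a cycle is far from both boundaries, so your proposed routing argument (``a length-$|f_1|$ separating cycle avoiding $V(f_{3-i})$ forces a short path between the boundaries'') has nothing to work with. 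Consequently Lemma~\ref{lemma-cyl} cannot be invoked verbatim, and the hypothesis $|w_\psi(f_1)|<|f_1|/3$ is not a dispensable artifact: it is precisely what lets one escape the rotating case that boundary-linkedness was designed to handle. Your fallback route (2$'$)--(3$'$) has the same defect, since Lemma~\ref{lemma-nbr} also assumes boundary-linkedness (the ``weaker sense'' you describe \emph{is} boundary-linkedness, and the grid violates it), and the proposed induction does not preserve jointness anyway: after collapsing a path on one boundary you get $|f_1'|=|f_1|-2\neq|f_2'|$, and separating cycles of $G$ of length $|f_1|$ through two neighbors of $u_2$ become separating cycles of length $|f_1|-2$ in $G'$.

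The paper's proof handles this differently. It first disposes of $|f_1|=4$ by Lemma~\ref{lemma-elim4}; then, since $|w_\psi(f_1)|<|f_1|/3$ and $|w_\psi(f_2)|<|f_2|/3$, the coloring is non-rotating on \emph{both} boundary cycles, so one can delete a vertex $u_2$ on $f_1$ and a vertex $v_2$ on $f_2$ and identify the neighbors of each into single vertices $w_1,w_2$ (a single simultaneous reduction, not an induction). In the resulting quadrangulation $G'$ every cycle separating the two boundary faces has length at least $|f_1'|$, and every such cycle of length exactly $|f_1'|$ passes through $w_1$ or $w_2$; choosing an innermost annulus $A$ bounded by two such extremal cycles (one through $w_1$, one through $w_2$) yields a middle piece $G_0$ that \emph{is} a long boundary-linked cylindrical quadrangulation, while the two pieces outside $A$ are handled by Lemma~\ref{lemma-extdisk}. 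Lemma~\ref{lemma-cyl} is then applied only to $G_0$. This decomposition into two disk-like caps plus a boundary-linked core is the missing idea in your proposal; without it (or some substitute for controlling the length-$|f_1|$ separating cycles deep inside the joint), the reduction to Lemma~\ref{lemma-cyl} does not go through.
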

\begin{proof}
Since $|w_\psi(f_1)|<|f_1|/3$, we have $|f_1|\neq 3$.  If $|f_1|=4$, then $\psi$ extends to a $3$-coloring of $G$ by Lemma~\ref{lemma-elim4}.
Therefore, assume $|f_1|\ge 5$.
Since $|w_\psi(f_1)|<|f_1|/3$ and $|w_\psi(f_2)|<|f_2|/3$, the coloring $\psi$ is not rotating on the boundaries of $f_1$ and $f_2$, and thus
there exist paths $u_1u_2u_3$ and $v_1v_2v_3$ in the cycles bounding $f_1$ and $f_2$,
respectively, such that $\psi(u_1)=\psi(u_3)$ and $\psi(v_1)=\psi(v_3)$.  Let $G'$ be the cylindrical quadrangulation obtained from $G-u_2-v_2$ by
identifying all neighbors of $u_2$ to a single vertex $w_1$ and all neighbors of $v_2$ to a single vertex $w_2$.
Let $\psi'$ be the coloring of the boundary of $G'$ such that $\psi'(w_1)=\psi(u_1)$, $\psi'(w_2)=\psi(v_1)$ and $\psi'(z)=\psi(z)$ for any other
boundary vertex of $G'$.  Clearly, it suffices to show that $\psi'$ extends to a $3$-coloring of $G'$.

Let $f'_1$ and $f'_2$ be the boundary faces of $G'$ corresponding to $f_1$ and $f_2$, respectively.  Note that every cycle in $G'$ separating $f'_1$
from $f'_2$ has length at least $|f'_1|$, and each such cycle of length $|f'_1|$ contains either $w_1$ or $w_2$.  We can assume that $G'$ is drawn
so that $f'_1$ is its outer face.  Let $A$ be a subset of
the plane homeomorphic to the closed annulus such that the boundary of $A$ is formed by cycles in $G'$ of length $|f'_1|$ separating $f'_1$ from $f'_2$,
one of them containing $w_1$, the other one containing $w_2$, such that no other cycle separating $f'_1$ from $f'_2$ is contained in $A$.  Let $G_0$ be the subgraph of $G'$ drawn in $A$.
Removing $A$ splits the plane into two connected open sets $B_1$ and $B_2$, where $f'_1\subset B_1$.
For $i\in \{1,2\}$, let $G_i$ be the subgraph of $G'$ drawn in the closure of $B_i$.  Note that $G_0$ is a long boundary-linked cylindrical quadrangulation.
By Lemma~\ref{lemma-extdisk}, $\psi'$ extends to a $3$-coloring of $G_1\cup G_2$, and by Lemma~\ref{lemma-cyl}, the resulting coloring of the boundary
of $G_0$ extends to a $3$-coloring of $G_0$.  This gives a $3$-coloring of $G'$ extending $\psi'$.
\end{proof}

To use the results of this section, we need means to constrain the winding number of a coloring on a boundary of a face.
We achieve this by filling the face by a carefully chosen cylindrical quadrangulation.
An \emph{$s$-cap} is a cylindrical quadrangulation $G$ with boundary faces $f_1$ and $f_2$, such that $G$ does not contain triangles and separating $4$-cycles, $|f_1|=s$,
$|f_2|=4+(s \bmod 2)$ and for every $u,v\in V(f_1)$, the distance between $u$ and $v$ in $G$ is the same as their distance in the cycle bounding $f_1$.
We call $f_2$ the \emph{special face} of the $s$-cap.

\begin{lemma}\label{lemma-excap}
For every $s\ge 4$, there exists an $s$-cap $G$ that has fewer vertices than every joint with boundary faces of length $s$.
\end{lemma}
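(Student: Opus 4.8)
The plan is to exhibit one explicit $s$-cap whose number of vertices is smaller than that of every joint with boundary faces of length $s$; taking $G$ to be an $s$-cap with the fewest vertices (which exists once we know the family of $s$-caps is nonempty) then proves the lemma. To build such a cap $G_s$, put $t=4+(s\bmod 2)$ and take concentric cycles $B_0,B_1,\dots,B_k$ with $|B_0|=s$, $|B_{i+1}|=|B_i|-2$ and $|B_k|=t$, so $k=(s-t)/2$; between $B_i$ and $B_{i+1}$ insert a quadrangulated annulus that ``loses'' exactly two vertices, built from $4$-faces of the form (two consecutive vertices of each cycle) together with a single ``pinch'' face (one vertex of $B_{i+1}$ joined to three consecutive vertices of $B_i$), and rotate the position of the pinch from annulus to annulus. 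Letting $f_1$ be the face bounded by $B_0$ and $f_2$ the face bounded by $B_k$, one checks that $G_s$ is a cylindrical quadrangulation, that it is triangle-free (no vertex of $B_{i+1}$ is joined to two consecutive vertices of $B_i$), and that it has no separating $4$-cycle. Euler's formula gives $|V(G_s)|=s+(s-2)+\dots+t\le \tfrac14 s^2+s$.

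Conversely, let $H$ be a joint with $|f_1|=|f_2|=s$; its boundary faces lie at distance $D\ge 4s$, so the BFS layers $L_0,\dots,L_D$ from $f_1$ (vertices at distance exactly $i$) are all nonempty, and each intermediate layer $L_i$ ($1\le i\le D-1$) separates $f_1$ from $f_2$ since consecutive vertices of a path differ in distance by at most one. Because the internal $4$-faces of a quadrangulation each meet at most three consecutive layers, the boundary of the ``near $f_1$'' subregion cut off at level $i$ is a closed walk using only $O(1)$ consecutive layers around $L_i$; it separates $f_1$ from $f_2$ and hence contains a separating cycle, which by the defining property of a joint has length at least $s$. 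Choosing $\Omega(s)$ pairwise disjoint blocks of consecutive layers, each contributing at least $s$ vertices, yields $|V(H)|\ge c\,s^2$ with a leading constant comfortably larger than $\tfrac14$ (the finitely many small values of $s$ being handled directly, where the slack from $D\ge 4s$ is enormous). Therefore $|V(H)|>|V(G_s)|\ge|V(G)|$ for the minimum $s$-cap $G$.

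The step I expect to be the main obstacle is the joint lower bound: the distance condition $D\ge 4s$ alone only forces $\Omega(s)$ vertices, so one must genuinely combine it with the fact that \emph{every} separating cycle is long to conclude that each layer is ``wide'', which is where the three-layer face analysis and the disjointness of the layer blocks are needed. A secondary but real subtlety is the distance-preservation of the constructed cap: an arbitrary quadrangulation of the disk bounded by $C_s$ can create shortcuts between vertices of $C_s$, so one must argue that in the evenly pinched concentric construction a path from $u\in V(B_0)$ to $v\in V(B_0)$ that descends $r$ cycles inward pays $2r$ edges to climb back out, while each step it takes around $B_i$ shifts its angular position on $B_0$ by at most $|B_0|/|B_i|$, giving total length at least $2r\bigl(1-\operatorname{dist}_{B_0}(u,v)/s\bigr)+\operatorname{dist}_{B_0}(u,v)\ge\operatorname{dist}_{B_0}(u,v)$ because $\operatorname{dist}_{B_0}(u,v)\le s/2$.
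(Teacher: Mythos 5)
Your overall strategy is the same as the paper's: exhibit one explicit small $s$-cap and beat it with a quadratic lower bound on joints obtained from distance layers around $f_1$, using that every cycle separating the boundary faces of a joint has length at least $s$ and that the two boundary faces are at distance at least $4s$. The lower-bound half is essentially the paper's argument in coarser form: the paper observes that for each $i$ the graph induced by the two layers $V_i\cup V_{i+1}$ already contains a separating cycle, so pairing the roughly $4s$ layers gives $|V(H)|\ge (2s+1)s$, while you use blocks of $O(1)$ layers and an unspecified constant; numerically either version suffices. Where you genuinely diverge is the cap: you build a $\sim s^2/4$-vertex cap from concentric cycles shrinking by two with rotated ``pinch'' faces, whereas the paper simply takes the $s\times s$ cylindrical grid with chords added to one boundary face, giving an $s^2$-vertex cap for which triangle-freeness, absence of separating $4$-cycles, and the distance-preservation condition on $f_1$ are immediate (any path leaving the boundary cycle and using a chord must travel about $2(s-1)$ levels, far more than the diameter $\lfloor s/2\rfloor$ of the boundary cycle, and inner cycles all have length $s$, so going inward never shortens an angular trip). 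Your smaller cap buys nothing, since the sharp pairing bound already beats $s^2$, and it costs you the one under-justified step in your write-up: the verification that your pinched construction preserves boundary distances and has no non-facial $4$-cycles. The displayed inequality $2r(1-\operatorname{dist}/s)+\operatorname{dist}\ge\operatorname{dist}$ is trivially true; the actual content is the claimed lower bound on path length, and your ``angular shift per step'' accounting is only heuristic (in particular the pinch edges both descend a level and shift angle, so they must be charged on both sides of the ledger). I believe this can be made rigorous, but it is exactly the kind of bookkeeping the paper's grid-plus-chords cap avoids; if you keep your construction, that verification needs to be written out, or else you should switch to the simpler cap and, if you also keep your coarser layer-block bound, check the resulting constants still dominate $s^2$ (they do if you use the two-layer observation, which yields $(2s+1)s>s^2$).
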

\begin{proof}
Let $G$ be an $s$-cap obtained from the $s\times s$ cylindrical quadrangulation by adding chords to one of its boundary faces.
We have $|V(G)|=s^2$.

Consider any joint $H$ with boundary faces $f_1$ and $f_2$ of length $s$.  For $1\le i\le 4s-1$, let $V_i$ denote the set of vertices
of $H$ at distance exactly $i$ from $f_1$.  Observe that since all faces of $H$ other than $f_1$ and $f_2$ have length $4$,
$H[V_i\cup V_{i+1}]$ contains a cycle separating $f_1$ from $f_2$ for $1\le i\le 4s-2$, and thus $|V_i|+|V_{i+1}|\ge s$.
Therefore, $|V(H)|\ge |f_1|+|f_2|+(2s-1)s=(2s+1)s>|V(G)|$.
\end{proof}

\section{$3$-coloring with distant anomalies}\label{sec-havel}

An \emph{anomaly} is a triple $T=(H_T,B_T,\Phi_T)$, where $H_T$ is a connected plane graph,
$B_T\subseteq V(H_T)$ and $\Phi_T$ is a set of $3$-colorings of $H_T$ such that
for every $\psi\in \Phi_T$, there exist distinct colors $a$ and $b$ such that the $3$-coloring obtained from $\psi$ by swapping the colors $a$ and $b$ also belongs to $\Phi_T$.
An anomaly $T$ \emph{appears} in a plane graph $G$ if $H_T$ is an induced subgraph of $G$ (where the plane embedding of $H_T$ is induced by the embedding of $G$)
and every $v\in B_T$ satisfies $\deg_G(v)=\deg_{H_T}(v)$.
Given a $3$-coloring $\varphi$ of a plane graph $G$ and an anomaly $T$ appearing in $G$, we say that $\varphi$ is
\emph{compatible with $T$} if $\varphi\restriction V(H_T)\in \Phi_T$.

An anomaly $T$ is \emph{locally extendable} if the following holds for every plane graph $G$: if $T$ appears in $G$
and all triangles in $G$ are contained in $H_T$, then there exists a $3$-coloring of $G$ compatible with $T$.
For an integer $r\ge 0$, an anomaly $T$ is \emph{strongly locally extendable with margin $r$} if for every plane graph $G$ in that $T$ appears so that all triangles of $G$ are contained in $H_T$,
and for every $4$-face $f$ of $G$ at distance at least $r$ from $H_T$, every $3$-coloring $\psi$ of the boundary of $f$ extends to a $3$-coloring of $G$ compatible with $T$.

The following anomalies are of interest for Theorems~\ref{havel} and \ref{thm-nosep4sp}.
Recall that the pattern of a $3$-coloring $\psi$ is the set $\{\psi^{-1}(1),\psi^{-1}(2),\psi^{-1}(3)\}$.
\begin{itemize}
\item A single precolored vertex ($H_T$ is a single vertex, $B_T$ is empty and $\Phi_T$ consists of a coloring assigning to the vertex of $H_T$ the prescribed color).
This anomaly is locally extendable by Gr\"otzsch' theorem.  It is also strongly locally extendable with some margin,
as we hypothesized in Conjecture~\ref{conj-fourext} and was later proved in~\cite{cylgen-part2}.
\item A cycle of length at most $5$ with a prescribed pattern of coloring ($H_T$ is a $(\le\!5)$-cycle, $B_T$ is empty and $\Phi_T$ consists of all $3$-colorings of $H_T$
with the prescribed pattern).  This anomaly is locally extendable by Lemma~\ref{lem:aksionov}.  Furthermore, the same lemma implies that if the cycle has length $3$, then
the anomaly is strongly locally extendable with margin $0$.
\item A vertex of degree at most $4$ with neighborhood precolored by one color
($H_T$ is a star with at most $4$ rays, $B_T$ contains the center of the star and $\Phi_T$ consists of all $3$-colorings of $H_T$ which assign the prescribed color to the rays).
This anomaly is locally extendable by the results of Gimbel and Thomassen~\cite{gimbel} for degree at most $3$ and Dvo\v{r}\'ak and Lidick\'y~\cite{col8cyc}
for degree $4$ (given a vertex $v$ of degree $k\le 4$ with precolored neighborhood, split $v$ into $k$ vertices of degree two colored arbitrarily and
extend the coloring of the resulting $2k$-cycle).
\end{itemize}

Thus, both Theorem~\ref{havel} and Theorem~\ref{thm-nosep4sp} are implied by the following general statement (which also shows that Conjecture~\ref{conj-fourext} implies
Conjecture~\ref{conj-farsv}),
by letting $C$ be the null graph, $p=5$ and $r=0$.

\begin{theorem}\label{thm-supernosep4}
For all integers $p\ge 1$ and $r\ge 0$, there exist constants $0<d_0<d_1$ with the following property.
Let $G$ be a plane graph and let $\TT=\{T_i:1\le i\le n\}$ be a set of locally extendable anomalies appearing in $G$, such that $|V(H_{T_i})|\le p$ for $1\le i\le n$.
Let $C$ be either the null graph or a facial cycle of $G$ of length at most five, at distance at least $2d_0$ from $H_T$ for each $T\in \TT$.
Suppose that
\begin{itemize}
\item for $1\le i < j\le n$, the distance between $H_{T_i}$ and $H_{T_j}$ in $G$ is at least $2d_1$,
\item every triangle in $G$ distinct from $C$ is contained in $H_T$ for some $T\in \TT$, and
\item if a separating $4$-cycle $K$ is at distance less than $2d_0$ from $H_T$ for some $T\in \TT$,
then either $K$ is contained in $H_T$, or $T$ is strongly locally extendable with margin $r$.
\end{itemize}
Then, every $3$-coloring of $C$ extends to a $3$-coloring of $G$ compatible with all elements of $\TT$.
\end{theorem}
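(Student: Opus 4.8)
The proof proceeds by induction on $|V(G)|+|E(G)|$, following the outline sketched in the introduction. Suppose $G$ is a minimal counterexample together with a $3$-coloring $\psi$ of $C$ that does not extend. First I would dispose of separating $4$-cycles that are ``far'' from all anomalies: if $K$ is a separating $4$-cycle at distance at least $2d_0$ from every $H_T$, write $G=G_1\cup G_2$ with $G_1\cap G_2=K$ and $C\subseteq G_1$; distribute the anomalies between $G_1$ and $G_2$ according to which side of $K$ they lie on, apply induction to $G_1$ to extend $\psi$, then apply induction to $G_2$ with the resulting coloring of $K$ playing the role of $C$ (note $K$ is a $4$-face of the modified $G_2$). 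A separating $4$-cycle close to some $H_T$ is either inside $H_T$ (so harmless) or $T$ is strongly locally extendable with margin $r$, which we will exploit later; either way, after this reduction we may assume that the ``relevant'' part of $G$ has no problematic separating $4$-cycles. Similarly, handle the degenerate case $n\le 1$: if $n=0$ then $G$ is triangle-free and Grötzsch's theorem (Theorem~\ref{grotzsch}) or Lemma~\ref{lem:aksionov} applies; if $n=1$ the single anomaly is locally extendable, and the real content is the case $n\ge 2$ (or $n=1$ with a precolored cycle $C$ present, which is why the hypothesis allows $C$ to be non-null).

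Now assume $n\ge 2$. After removing redundant edges, reduce to the case that $G$ is $Z$-critical where $Z=C\cup\bigcup_T H_T$ (if not, pass to a critical subgraph; a coloring of the subgraph compatible with $\TT$ is a coloring of $G$ compatible with $\TT$ since the anomaly conditions only constrain the induced subgraphs $H_T$ and the degrees of vertices in $B_T$, and removing edges not in $Z$ cannot decrease degrees of $B_T$-vertices below $\deg_{H_T}$ — here one must check the degree condition is preserved, which is the one subtle bookkeeping point). Then apply Lemma~\ref{lemma-struct} with $D_1$ chosen larger than $r$ and than the diameter bound $p$, and $D_2=f_{\ref{lemma-struct}}(D_1,p)$; take $d_1$ large enough that the scene $(G,\{V(H_{T_i})\},C)$ satisfies the $(D_1,D_2)$-tightness and $2D_2$-distance hypotheses (this is where the constants $d_0<d_1$ get pinned down, with $d_1$ absorbing all the tower of constants from Lemmas~\ref{lem:cylsep} and \ref{lemma-isolate}). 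The conclusion is a clean joint $H\subseteq G$ with boundary cycles $C_1,C_2$, vertex-disjoint from $C$, at distance between $D_1$ and $D_2-1$ from some $H_{T}$ — so $H$ separates that one ``near'' anomaly (together with $C$, which lies on the far side) from all the others.

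With the joint $H$ in hand, cut $G$ along $H$: write $G=G'\cup G''$ where $G'$ contains $C_1$ and everything outside $H$ on the $C$-side (including $C$ and all anomalies except the near one), $G''$ contains $C_2$ and the near anomaly, and $G'\cap G'' \subseteq H$. For each side, fill the hole with a cap: let $H'_i$ be obtained from $G'$ (resp. $G''$) by gluing an $|C_i|$-cap along $C_i$, using Lemma~\ref{lemma-excap} to ensure the cap is smaller than any joint of that boundary length — this guarantees $H'_i$ is strictly smaller than $G$ in the relevant measure (since $H$ itself, being a joint, contributes more than the cap). Apply induction: extend $\psi$ to a coloring $\varphi'$ of $H'_1$ compatible with the anomalies it carries, and in particular read off the coloring on the special face $C'_1$ of the cap; transfer this coloring to the special face $C'_2$ of the other cap and extend to a coloring $\varphi''$ of $H'_2$ compatible with the remaining anomaly. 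Because the caps are triangle-free quadrangulations, Propositions~\ref{wsum0} and \ref{wC4} force the winding numbers $w_{\varphi'}(C_1)$ and $w_{\varphi''}(C_2)$ to have the same value up to sign, so $w(C_1)+w(C_2)=0$; then Lemma~\ref{lemma-cyl} (the joint $H$ being long and boundary-linked) extends the combined coloring of $C_1\cup C_2$ across $H$. Splicing the three colorings $\varphi'|_{G'-V(H)}$, $\varphi''|_{G''-V(H)}$, and the coloring of $H$ yields a proper $3$-coloring of $G$ extending $\psi$ and compatible with all of $\TT$ — the contradiction. \textbf{The main obstacle} is ensuring that the winding numbers genuinely match: one must verify that the boundary-linkedness of the joint $H$ (guaranteed by Lemma~\ref{lemma-struct}) is strong enough, that the caps are chosen with the correct parity of boundary length so that Lemma~\ref{lemma-cyl} applies, and — most delicately — that the degree and separating-$4$-cycle side conditions survive all the cutting-and-capping operations, in particular that no new separating $4$-cycle is created close to an anomaly that is not strongly locally extendable; this last point is exactly why the hypothesis distinguishes strongly locally extendable anomalies and why $d_0$ must be chosen in proper relation to $r$ and $d_1$.
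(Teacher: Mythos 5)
Your proposal follows the same overall architecture as the paper (minimal counterexample, structural lemma producing a clean joint, cutting at the joint and capping both sides with Lemma~\ref{lemma-excap}, matching winding numbers, extending across the joint), but it has two genuine gaps. The first is at the point where you invoke Lemma~\ref{lemma-struct}: you write that you ``take $d_1$ large enough that the scene satisfies the $(D_1,D_2)$-tightness and $2D_2$-distance hypotheses.'' The distance hypotheses are indeed a matter of constants, but $(D_1,D_2)$-tightness is not: it is a structural assertion about every $4$-face in a distance range being $S$-tight (or bounded by $C$, or attached to a short cycle separating $S$ from $C$), and it is precisely here that the minimality of $G$ must be used. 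One has to show that for each such $4$-face, identifying either pair of opposite vertices destroys the hypotheses (otherwise the smaller graph is colored by induction and the coloring pulls back), and then Lemma~\ref{lem:distcrit} forces the face to be $S$-tight. Your proposal never mentions this $4$-face-collapsing step, which is the heart of the link between criticality and the structural lemma. Likewise, Lemma~\ref{lemma-struct} requires ``no $D_1$-distant private $4$-cycles,'' which needs the argument that cuts along separating $(\le\!5)$-cycles and uses strong local extendability with margin $r$ (the only place where the margin hypothesis enters); you promise to ``exploit later'' the strongly-locally-extendable condition but never do, so this hypothesis is also unverified and the lemma cannot be applied as stated.

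The second gap is the case $n=1$ with $C$ non-null, which you explicitly set aside after ``Now assume $n\ge 2$'' and never return to. This case is not degenerate: local extendability says nothing about extending a precoloring of a $(\le\!5)$-cycle far from the single anomaly, and in the paper it occupies a substantial part of the proof — a separate tightness argument in which the identifications may create short cycles separating $S$ from $C$ (this is why the definition of a tight scene allows $4$-faces attached to a $(\le\!6)$-cycle separating $S$ from $C$), followed by a second application of Lemma~\ref{lemma-struct} using its special clause for $|\SS|=1$. Moreover your own induction needs exactly this case: after cutting at the joint, the near side is a graph containing one anomaly and one precolored short cycle (the special face of the cap), so the statement for that configuration must be part of what is being proved, not waved away. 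A smaller inaccuracy: a joint need not be boundary-linked, so Lemma~\ref{lemma-cyl} does not apply to $H$ directly; the correct tool is Corollary~\ref{cor-cyl}, whose extra hypothesis $|w_\psi(f_1)|<|f_1|/3$ is exactly what the caps provide, since their special faces have length $4$ or $5$ and hence carry winding number at most $1$ in absolute value. You correctly flag the winding-number matching as the main obstacle, but the missing tightness verification and the missing single-anomaly case are where the real work of the paper lies.
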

\begin{proof}
For the function $f_{\ref{lemma-struct}}:\mathbb{N}^2\to\mathbb{N}$ from Lemma~\ref{lemma-struct}, let
$d_0=\max(r, f_{\ref{lemma-struct}}(r+4,p))+1$ and $d_1=\max(2d_0,f_{\ref{lemma-struct}}(2d_0+3,p))$.
We will prove by induction on $|V(G)|$ that $d_0$ and $d_1$ satisfy the conclusion of the theorem.

Let $G$ be as stated, let $\psi$ be a $3$-coloring of $C$, and assume
for a contradiction that $\psi$ does not extend to a $3$-coloring of $G$ compatible with all elements of $\TT$.
Let $\SS=\{V(H_T):T\in \TT\}$, $Z_0=\bigcup_{S\in \SS} G[S]$ and $Z=C\cup Z_0$.
For a set $X\subseteq V(G)$, let $\TT[X]=\{T\in\TT : V(H_T)\subseteq X\}$.
Note that $G$ is connected, as otherwise we can color each component of $G$ separately by the induction hypothesis.
Without loss of generality, we can assume that if $C$ is not null, then it bounds the outer face of $G$.
Hence, $(G,\SS,C)$ is a $2d_1$-distant $p$-small internally triangle-free scene.
Note also that if $C$ is not null then $C$ is an induced cycle, since otherwise a triangle containing a chord of $C$ would be contained
in $H_T$ for some $T\in\TT$ and the distance between $H_T$ and $C$ would be zero, contradicting the assumptions.

\claim{cl-nojoint}{Suppose $H$ is a clean joint in $G$ vertex-disjoint from $Z$, with boundary faces $f_1$ and $f_2$ labelled so that the face of $G$ bounded by $C$ (if any) is contained in
$f_1$. For $i\in\{1,2\}$, let $G'_i$ be the subgraph of $G$ drawn in the closure of $f_i$.  Then $|\TT[V(G'_2)]|\ge 2$ and $H$ is at distance less than $2d_0$ from $H_T$ in $G$
for some $T\in \TT[V(G'_2)]$.}
\begin{subproof}
Suppose for a contradiction that either $|\TT[V(G'_2)]|\le 1$ or $H$ is at distance at least $2d_0$ from every subgraph $H_T$ with $T\in \TT[V(G'_2)]$.

For $i\in\{1,2\}$, let $H_i$ be an $|f_i|$-cap with its non-special boundary
cycle equal to the boundary of $f_i$, but otherwise disjoint from $G'_i$, such that $|V(H_i)|<|V(H)|$, which exists
by Lemma~\ref{lemma-excap}. Let $h_i$ be the special face of $H_i$. Let $G_i=G'_i+H_i$.  Note that the distance between any two elements of $\SS\cup \{C\}$
in $G_i$ is the same as the distance between them in $G'_i$, which is greater or equal to their distance in $G$.
By the induction hypothesis, $\psi$ extends to a $3$-coloring $\varphi_1$ of $G_1$ compatible with all the elements of $\TT[V(G'_1)]$.
Consider the restriction of $\varphi_1$ to $H_1$.  Propositions~\ref{wsum0} and \ref{wC4} imply that $w_{\varphi_1}(f_1)+w_{\varphi_1}(h_1)=0$.
Furthermore, since $h_1$ has length at most $5$, we have $w_{\varphi_1}(h_1)=0$ if $|h_1|=4$ ($f_1$ has even length) and
$|w_{\varphi_1}(h_1)|=1$ if $|h_1|=5$ ($f_1$ has odd length).

We now obtain a $3$-coloring $\varphi_2$ of $G_2$ compatible with all the elements of $\TT[V(G'_2)]$ such that $w_{\varphi_2}(h_2)=w_{\varphi_1}(f_1)$.
Let $C_2$ be the cycle bounding $h_2$.
\begin{itemize}
\item Suppose $\TT[V(G'_2)]=\emptyset$.  Since $h_2$, $h_1$, and $f_1$ have the same parity and $|w_{\varphi_1}(f_1)|\le 1$, there exists
a $3$-coloring $\psi_2$ of $C_2$ such that $w_{\psi_2}(h_2)=w_{\varphi_1}(f_1)$.  Since $G_2$ is planar and triangle-free,
$\psi_2$ extends to a $3$-coloring $\varphi_2$ of $G_2$ by Lemma~\ref{lem:aksionov}.
\item Suppose $|\TT[V(G'_2)]|=1$.  Then there exists a $3$-coloring $\varphi'_2$ of $G_2$ compatible with $T$ by the local extendability of $T$.
Let $a$ and $b$ be distinct colors such that the $3$-coloring $\varphi''_2$ obtained from $\varphi'_2$ by swapping the colors $a$ and $b$ is also compatible
with $T$.  Note that $w_{\varphi'_2}(h_2)=-w_{\varphi''_2}(h_2)$, $|w_{\varphi'_2}(h_2)|\le 1$ and $w_{\varphi'_2}(h_2)$ and $w_{\varphi_1}(f_1)$ have the same
parity, and thus we can choose $\varphi_2$ as one of $\varphi'_2$ and $\varphi''_2$.
\item Suppose $|\TT[V(G'_2)]|\ge 2$, and thus $H$ is at distance at least $2d_0$ from every subgraph $H_T$ with $T\in \TT[V(G'_2)]$.
Choose $\psi_2$ be an arbitrary $3$-coloring of $C_2$ such that $w_{\psi_2}(h_2)=w_{\varphi_1}(f_1)$.
The distance from $C_2$ to any subgraph $H_T$ with $T\in \TT[V(G'_2)]$ is also at least $2d_0$, and thus by the induction hypothesis, $\psi_2$
extends to a $3$-coloring $\varphi_2$ of $G_2$ compatible with all elements of $\TT[V(G'_2)]$.
\end{itemize}
By Propositions~\ref{wsum0} and \ref{wC4} for $H_2$, we have $w_{\varphi_2}(f_2)=-w_{\varphi_2}(h_2)=-w_{\varphi_1}(f_1)$.
By Corollary~\ref{cor-cyl}, the restriction of $\varphi_1\cup \varphi_2$ to the boundary cycles
of $f_1$ and $f_2$ extends to a $3$-coloring $\varphi_3$ of $H$.  Consequently, the restriction of $\varphi_1$ to $G'_1$,
the restriction of $\varphi_2$ to $G'_2$, and $\varphi_3$ together give a $3$-coloring of $G$ extending $\psi$ and compatible with all the elements of $\TT$.
This is a contradiction.
\end{subproof}

We may assume, by taking a subgraph of $G$, that $\psi$ extends to a $3$-coloring compatible with all elements of $\TT$
for every proper subgraph of $G$ that includes $Z$.  Using the fact that $G$ is connected, we have $G\neq Z$, as otherwise either
$\TT=\emptyset$, $G=C$, and the claim is trivial, or $C$ is the null graph and $|\TT|=1$ and the claim follows by the local extendability
of the anomaly in $\TT$.  Consequently, $G$ is $Z$-critical.

If $K$ is a separating $(\le\!5)$-cycle and $\Delta_K$ is the open disk in the plane bounded by $K$,
then at least one vertex or edge of $Z$ is drawn in $\Delta_K$, since $G$ is $Z$-critical and every $3$-coloring of a $(\le\!5)$-cycle
extends to a $3$-coloring of a triangle-free planar graph by Lemma~\ref{lem:aksionov}.
We claim that 
\claim{cl-nosep}{if $K$ is a separating cycle of length at most five in $G$, then
                 $K$ is at distance less than $2d_0$ from $Z_0$.  Furthermore, if $|K|\le 4$ and $K$ is $S$-private for some $S\in \SS$,
		 then the distance between $K$ and $S$ is less than $r$.}
\begin{subproof}
Without loss of generality, we can assume that $K$ does not have a chord $e$ drawn in $\Delta_K$; otherwise, $e$ is contained in a triangle,
and thus $K$ intersects $Z_0$, and moreover, if $|K|=4$ and $K$ is $S$-private, then one of the triangles in $K+e$ is $S$-private and we can consider it instead of $K$.

Suppose that for some anomaly $T\in\TT$, $H_T$ intersects $\Delta_K$ but is not contained in $\Delta_K$.
Since $K$ does not have a chord drawn in $\Delta_K$, a vertex of $H_T$ is drawn in $\Delta_K$, and thus if $K$ is $S$-private, then
$S=V(H_T)$.  Since $H_T$ is not contained in $\Delta_K$, it follows that $K$ is at distance $0$ from $H_T$, and the claim follows.

Let $G_1$ be the subgraph of $G$ drawn in the complement of $\Delta_K$ and $G_2$ the subgraph drawn in the closure of $\Delta_K$.
By the previous paragraph, we can assume the sets $\TT_1=\TT[V(G_1)]$ and $\TT_2=\TT[V(G_2)\setminus V(K)]$ partition $\TT$.
By the induction hypothesis, $G_1$ has a $3$-coloring $\varphi_1$ extending $\psi$ and compatible with all elements of $\TT_2$.
Since $\psi$ does not extend to a $3$-coloring of $G$ compatible with all elements of $\TT$, it follows the restriction of $\varphi_1$ to
$K$ does not extend to a $3$-coloring of $G_2$ compatible with all elements of $\TT_2$.  By the induction hypothesis, we conclude
that $K$ is at distance less than $2d_0$ from $H_T$ for some element $T\in\TT_1$.

Furthermore, if $K$ is $S$-private for some $S\in\SS$, then $\TT_1=\{T\}$ and $S=V(H_T)$.  If $K$ is a triangle, then since
$K$ is at distance less then $2d_0$ from $H_T$, the assumptions of this lemma imply $K\subseteq H_T$.  If $K$ is a $4$-cycle
not contained in $H_T$, then the assumptions of this lemma imply $H_T$ is strongly locally extendable with margin $r$,
and thus the distance between $K$ and $S$ is at most $r$ since the restriction of $\varphi_1$ to
$K$ does not extend to a $3$-coloring of $G_2$ compatible with~$T$.
\end{subproof}

In particular, the scene $(G,\SS,C)$ contains no $r$-distant private $4$-cycles.  We now consider $4$-faces of $G$.

\claim{cl-tight}{Let $f$ be a $4$-face of $G$ at distance at least $2d_0+3$ from $Z_0$.
     If $f$ is not bounded by $C$, then $f$ is $S$-tight for a unique set $S\in\SS$ at distance at most $d_1-1$ from~$f$.}
\begin{subproof}
Let the vertices of $f$ be numbered $u_1,u_2,u_3,u_4$ in order.
By (\ref{cl-nosep}), no vertex of $f$ is contained in a separating $4$-cycle.
Since additionally $C$ is an induced cycle if it is not null, the intersection of the boundary of $f$ with $C$ is a path of length
at most two.

If the intersection contains three vertices, say $u_1$, $u_2$ and $u_3$, then note that $u_2$ has degree two.
Consider the graph $G-u_2$ and color $u_4$ by $\psi(u_2)$.  By the induction hypothesis,
this coloring extends to a $3$-coloring of $G-u_2$ compatible with all elements of $\TT$, which also gives a $3$-coloring of $G$
extending $\psi$ and compatible with all elements of $\TT$, a contradiction.

Therefore, we can assume that $u_3,u_4\not\in V(C)$.  Note that $u_1u_2u_3$ and $u_1u_4u_3$
are the only paths of length at most three joining $u_1$ with $u_3$, as otherwise, since $f$ is at distance at least $2d_0+3$ from $Z_0$,
$G$ would contain a separating $(\le\!5)$-cycle contradicting (\ref{cl-nosep}).
Let $G_{13}$ be the graph obtained from $G$ by identifying $u_1$ and $u_3$ and suppressing
parallel edges, and observe that $G_{13}$ contains no new triangles.
Furthermore, $C$ as well as every new separating $4$-cycle in $G_{13}$ is at distance at least $2d_0$ from $Z_0$.
Let $G_{24}$ be defined analogously.

If $G_{13}$ or $G_{24}$ satisfies the assumptions of Theorem~\ref{thm-supernosep4}, then it has a $3$-coloring extending $\psi$
and compatible with all elements of $\TT$ by induction, which would give such a $3$-coloring of $G$.
Otherwise, both $G_{13}$ and $G_{24}$ contain a pair of anomalies at distance at most $2d_1-1$ from each other,
and thus $f$ is $S$-tight for a unique $S\in\SS$ at distance at most $d_1-1$ from $f$ by Lemma~\ref{lem:distcrit}.
\end{subproof}

Therefore, the scene $(G,\SS,C)$ is $(2d_0+3,d_1)$-tight.  If $|\SS|\ge 2$, then the choice of $d_1$ and Lemma~\ref{lemma-struct}
implies $G$ contains a clean joint vertex-disjoint from $C$ whose vertices are at distance at least $2d_0+3$ and at most $d_1-1$
from some element $S\in\SS$.  By (\ref{cl-nojoint}), $H$ is at distance less than $2d_0$ from some element $S'\in\SS$, necessarily distinct from $S$.
But then the distance between $S$ and $S'$ is less than $d_1+2d_0-1\le 2d_1$, contradicting the assumptions of this lemma.

Therefore, $|\SS|\le 1$.  If $\SS=\emptyset$, then $\psi$ extends to a $3$-coloring of $G$ by Lemma~\ref{lem:aksionov}.  Therefore, we can assume that $|\SS|=1$; let $\SS=\{S\}$ and $\TT=\{T\}$.
If $C$ is the null graph, then $G$ has a $3$-coloring compatible with $T$, since $T$ is locally extendable.
Hence, suppose that $C$ is a $(\le\!5)$-cycle.
By (\ref{cl-nosep}) and the assumptions of this theorem, if $T$ is not strongly locally extendable with margin $r$,
then all separating $4$-cycles of $G$ are contained in $H_T$.

\claim{cl-tight2}{Let $f$ be a $4$-face of $G$ at distance at least $r+4$ and at most $d_0-1$ from $S$.
     If $f$ is not $S$-tight, then $f$ is attached to a $(\le\!6)$-cycle separating $S$ from $C$.}
\begin{subproof}
Let the vertices of $f$ be numbered $u_1,u_2,u_3,u_4$ in order.
For $i\in\{1,2\}$, let $G_{i(i+2)}$ the graph obtained from $G$ by identifying $u_i$ with $u_{i+2}$ to a new vertex $z_i$ and suppressing
parallel edges.  If the distance between $S$ and $C$ in both $G_{13}$ and $G_{24}$ is less than $2d_0$, then
Lemma~\ref{lem:distcrit} applied to $(G,\{S,C\},\varnothing)$ implies $f$ is $S$-tight.
Hence, we can assume that the distance between $S$ and $C$ in $G_{13}$ is at least $2d_0$.

Suppose there exists a triangle in $G_{13}$ not contained in $H_T$, which was necessarily created by identification of $u_1$ with $u_3$.
Then $G$ contains a $5$-cycle $K=u_1u_2u_3xy$.  Since $G$ is $Z$-critical, $u_2$ has degree at least three, and thus $K$ does not bound a face.
Lemma~\ref{lem:aksionov} implies that $K$ separates $S$ from $C$, and thus the conclusion of the claim holds since $f$ is attached to $K$.
Therefore, we can assume every triangle in $G_{13}$ is contained in $H_T$.

Since $\psi$ does not extend to a $3$-coloring of $G$ compatible with $T$, $\psi$ also does not
extend to a $3$-coloring of $G_{13}$ compatible with $T$.  Let $G'_{13}$ be a minimal subgraph of $G_{13}$
containing $C$ and $H_T$ such that $\psi$ does not extend to a $3$-coloring of $G'_{13}$ compatible with $T$.
It follows that the induction hypothesis cannot apply to $G'_{13}$, and thus $T$ is not strongly locally extendable with margin $r$
and there exists a separating $4$-cycle $K'$ in $G'_{13}$ not contained in $H_T$, which was necessarily created by the identification of $u_1$ with $u_3$.
The minimality of $G'_{13}$ and Lemma~\ref{lem:aksionov} imply that $K'$ separates $S$ from $C$.  Let $K$ be the cycle in $G$ obtained from $K'$
by replacing $z_1$ by the path $u_1u_2u_3$.  Then $f$ is attached to the $6$-cycle $K$ separating $S$ from $C$.
\end{subproof}

Therefore, the scene $(G,\SS,C)$ is $(r+4,d_0-1)$-tight.  Since the distance between $S$ and $C$ is at least $2d_0>d_0-2$,
Lemma~\ref{lemma-struct} and the choice of $d_0$ implies $H$ contains a clean joint vertex-disjoint from $Z$.
Since $|\TT|=1$, this contradicts (\ref{cl-nojoint}) and finishes the proof.
\end{proof}


\end{document}